\DeclareDocumentCommand\littleo{s o m} {o\IfNoValueF{#2}{_{#2}}\IfBooleanTF{#1}{(#3)}{\left(#3\right)}}
\DeclareDocumentCommand\bigo{s o m} {O\IfNoValueF{#2}{_{#2}}\IfBooleanTF{#1}{(#3)}{\left(#3\right)}}
\DeclareDocumentCommand\sobolev{m m o} {H^{#1}_{#3}(#2 \IfNoValueF{#3})}
\DeclareDocumentCommand\lp{m m o} {L^{#1}\IfNoValueF{#3}{_{#3}}\left(#2\right)}
\DeclareDocumentCommand\cont{o m o} {C\IfNoValueF{#1}{^{#1}}(#2\IfNoValueF{#3}{;#3})}
\DeclareDocumentCommand\contc{o m o} {C_c\IfNoValueF{#1}{^{#1}}(#2\IfNoValueF{#3}{;#3})}
\DeclareDocumentCommand\norm{s m o} {\IfBooleanTF{#1}{\|#2\|}{\left\|#2\right\|}\IfNoValueF{#3}{_{#3}}}
\DeclareDocumentCommand\seminorm{s m o} {\IfBooleanTF{#1}{\|#2\|}{\left\|#2\right\|}\IfNoValueF{#3}{_{#3}}}
\DeclareDocumentCommand\ip{s m m o} {\IfBooleanTF{#1}{( #2,#3 )}{\left( #2,#3 \right)}\IfNoValueF{#4}{_{#4}}}
\DeclareDocumentCommand\eip{s m m o} {\IfBooleanTF{#1}{\langle #2,#3 \rangle}{\left\langle #2,#3 \right\rangle}\IfNoValueF{#4}{_{#4}}}
\DeclareDocumentCommand\abs{s m o} {\IfBooleanTF{#1}{|#2|}{\left|#2\right|}\IfNoValueF{#3}{_{#3}}}
\DeclareDocumentCommand\eucnorm{s m o} {\IfBooleanTF{#1}{|#2|}{\left|#2\right|}\IfNoValueF{#3}{_{#3}}}
\DeclareDocumentCommand\wass{s o m m} {W_{\IfNoValueF{2}{#2}}\IfBooleanTF{#1}{(#3, #4)}{\left(#3, #4\right)}}
\DeclareMathOperator{\e}{e}
\DeclareMathOperator{\hessian}{\mathrm D^2\!}
\DeclareMathOperator*{\argmin}{arg\,min}
\newcommand{\vect}[1]{\mathbf{#1}}
\newcommand{\dummy}{\mathord{\color{black!33}\bullet}}%
\newcommand{\expect}{\mathbf{E}}
\newcommand{\field}{\mathbf}
\newcommand{\statespace}{\mathbf K}
\newcommand{\grad}{\vect\nabla}
\newcommand{\laplacian}{\triangle}
\newcommand{\nat}{\mathbf N}
\newcommand{\proba}{\mathbf P}
\newcommand{\real}{\field R}
\renewcommand{\t}{\mathsf T}
\newcommand{\torus}{\field T}
\renewcommand{\d}{\mathrm d}
\DeclareDocumentCommand \derivative{s m o m}{%
    \def\@der{\IfBooleanTF{#1}{\mathrm{d}}{\partial}}
    \def\@default{%
        \mathchoice{%
                \frac{%
                    \@der\ifnum\pdfstrcmp{#2}{1}=0\else^{#2}\fi {\IfNoValueTF{#3}{}{#3}}
                }{%
                    \@for\@token:={#4}\do{\@der \@token}
                }
            } {%
                \@for\@token:={#4}\do{\@der_{\@token}\ifnum\pdfstrcmp{#2}{1}=0\else^{#2}\fi} \IfNoValueTF{#3}{}{#3}
            } {} {}
    }
    \IfBooleanTF{#1}{\@default}{\@default}
}
\definecolor{darkred}{rgb}{0.5,0,0}
\definecolor{darkgreen}{rgb}{0,0.5,0}
\definecolor{darkblue}{rgb}{0,0,.5}
\theoremstyle{plain}
\newtheorem{theorem}{Theorem}[section]
\newtheorem{lemma}[theorem]{Lemma}
\newtheorem{proposition}[theorem]{Proposition}
\newtheorem{remark}{Remark}[section]
\numberwithin{equation}{section}
\crefname{theorem}{Theorem}{Theorems}
\crefname{lemma}{Lemma}{Lemmata}
\crefname{remark}{Remark}{Remarks}
\crefname{proposition}{Proposition}{Propositions}
\crefname{section}{Section}{Sections}
\crefname{subsection}{Subsection}{Subsections}
\crefname{equation}{}{}
\Crefname{equation}{Equation}{Equations}
     \newenvironment{keywords}{\begin{quotation}\noindent \textbf{Keywords}: }{\end{quotation}}
     \newenvironment{AMS}{\begin{quotation} \noindent \textbf{AMS subject classifications}: }{\end{quotation}}
\title{Derivative-free Bayesian Inversion Using Multiscale Dynamics}
\author{G. A. Pavliotis, A. M. Stuart, U. Vaes}
\begin{document}
\maketitle

\begin{abstract}
    Inverse problems are ubiquitous because they formalize the integration of data with mathematical models.
    In many scientific applications the forward model is expensive to evaluate,
    and adjoint computations are difficult to employ;
    in this setting derivative-free methods which involve a small number of forward model evaluations are an attractive proposition.
    Ensemble Kalman based interacting particle systems
    (and variants such as consensus based and unscented Kalman approaches)
    have proven empirically successful in this context,
    but suffer from the fact that they cannot be systematically refined to return the true solution,
    except in the setting of linear forward models~\cite{2019arXiv190308866G}.
    In this paper, we propose a new derivative-free approach to Bayesian inversion,
    which may be employed for posterior sampling or for maximum a posteriori (MAP) estimation,
    and may be systematically refined.
    The method relies on a fast/slow system of stochastic differential equations (SDEs) for the local approximation of the gradient of the log-likelihood appearing in a Langevin diffusion.
    Furthermore the method may be preconditioned by use of information from ensemble Kalman based methods (and variants),
    providing a methodology which leverages the documented advantages of those methods,
    whilst also being provably refineable.
    We define the methodology, highlighting its flexibility and many variants,
    provide a theoretical analysis of the proposed approach,
    and demonstrate its efficacy by means of numerical experiments.
\end{abstract}

\begin{keywords}
    Inverse problems, Multiscale methods, Derivative-free methods.
\end{keywords}

\begin{AMS}
    62F15, 
    65C35, 
    65C30, 
    65N21. 
\end{AMS}

\section{Introduction}%
\label{sec:introduction}

\subsection{Overview}%
\label{sub:overview}
In this paper, we consider the inverse problem of finding an unknown parameter $\theta \in \real^d$ from data $y \in \real^K$ where
\begin{align}
    \label{eq:inverse_problem}
    y = G(\theta) + \eta,
\end{align}
with $G: \real^d \rightarrow \real^K$ a forward operator and $\eta$ the observational noise.
In the Bayesian approach to inverse problems~\cite{MR2102218,MR2652785,MR3839555},
the vectors $\theta$, $\eta$ and $y$ are treated as random variables.
If the unknown parameter and the noise are assumed to be independent and normally distributed,
with distribution parameters $\theta \sim \mathcal N(m, \Sigma)$ and $\eta \sim \mathcal N(0, \Gamma)$,
then the joint distribution of $(\theta, y)$ can be obtained from~\eqref{eq:inverse_problem}:
\[
    (\theta, y) \sim \frac{\e^{-\Phi_R(\theta; y)}}{\int_{\real^K} \!  \int_{\real^{d}}\e^{-\Phi_R(\theta; y)} \, \d \theta \, \d y}.
\]
Here $\Phi_R$ is a function that regularizes the \emph{least-squares functional} $\Phi$;
these two functions are given by
\begin{subequations}
\begin{align}
\label{eq:reg_least_squares}%
& \Phi(\theta; y) = \frac{1}{2} \abs{y - G(\theta)}[\Gamma]^2, \\
& \Phi_R (\theta; y) = \Phi(\theta; y) + \frac{1}{2} \abs{\theta - m}[\Sigma]^2,
\end{align}
\end{subequations}
with the notation,
for a positive definite matrix $A$,
\[
    \eip*{\dummy_1}{\dummy_2}[A] = \eip*{\dummy_1}{A^{-1}\dummy_2}, \qquad \abs{\dummy}[A]^2 = \eip{\dummy}{\dummy}[A],
\]
where $\eip{\dummy}{\dummy}$ is the Euclidean inner product.
By Bayes' formula,
the conditional probability density function of $\theta$ given $y$ equals
\begin{equation}
    \label{eq:bayesian_posterior}
    \pi_*(\theta) = \frac{\exp\bigl(- \Phi_R(\theta; y) \bigr)}{\int_{\real^{d}}\exp\bigl(- \Phi_R(\theta; y) \bigr) \, \d \theta} =: \frac{1}{Z(y)} \exp\bigl(-\Phi_R(\theta; y)\bigr).
\end{equation}
This probability distribution is the \emph{Bayesian posterior},
and its pointwise maximizer is the \emph{maximum a posteriori} (MAP) estimator.

There exist several approaches for solving inverse problems,
which we review in the next section.
In this paper,
we present a new derivative-free approach for~\eqref{eq:inverse_problem}.
Our method is based on a fast/slow system of SDEs,
and it may be used for sampling from the Bayesian posterior~\eqref{eq:bayesian_posterior} or
for calculating the MAP estimator.
Unlike the Ensemble Kalman Sampler (EKS),
and variants such as the unscented Kalman sampler (UKS)~\cite{unscentedki} and consensus based sampler (CBS)~\cite{cbs},
the method we present can be refined systematically in order to approach the true solution:
in the refinement limit,
it produces a stochastic process described by dynamics of the type
\begin{equation}
    \label{eq:overdamped_Langevin}
    \d \theta_t = - K \grad_{\theta} \Phi_R(\theta_t; y) \, \d t + \nu \sqrt{2 K} \, \d w_t.
\end{equation}
Here $\{w_t\}_{t\geq0}$ is a standard $d$-dimensional Brownian motion,
$K$ is a symmetric positive definite matrix,
and $\nu$ is a coefficient equal to 1 if the method is used for posterior sampling or~0 in optimization mode.
When~$\nu = 0$, equation~\eqref{eq:overdamped_Langevin} is a preconditioned gradient descent in the potential~$\Phi_R$,
and, when~$\nu = 1$, equation~\eqref{eq:overdamped_Langevin} is a preconditioned overdamped Langevin diffusion in the potential~$\Phi_R$.
From now on,
since the observation $y$ is a fixed parameter of the inverse problem,
we write~$\Phi(\theta) = \Phi(\theta; y)$ and $\Phi_R(\theta) = \Phi_R(\theta; y)$ and $Z = Z(y)$ for simplicity.


\subsection{Literature Review}%
\label{sub:literature_review}
There are two main approaches for solving inverse problems of the type~\eqref{eq:inverse_problem}: the classical approach and the Bayesian approach~\cite{MR2102218,MR2652785}.
Classical methods are generally based on an optimization problem of the form
\begin{equation}
    \label{eq:classical}
    \argmin_{\theta \in \real^d} \frac{1}{2} \eucnorm{y - G(\theta)}[X]^2 + R(\theta),
\end{equation}
for some positive definite matrix $X$ and
where $R$ is an optional regularization term.
The aim of the regularization term  is to ensure that the minimization problem is well-posed;
without this term, there may be minimizing sequences that are not bounded in $\real^d$.
See, for example, \cite{MR3774703,MR3968262,MR3104933,MR3377106,MR2746411} for a discussion of the classical approach~\eqref{eq:classical} and of regularization techniques.
A widely used form for $R(\theta)$,
known as a Tikhonov--Phillips regularization,
is given by $R(\theta) = \eucnorm{\theta - z}_{Y}^2$,
for some vector $z$ and a positive definite matrix $Y$ is .
In the classical approach,
the matrices $X$, $Y$ and the vector $z$ are generally parameters without a probabilistic interpretation.

The Bayesian approach to the inverse problem~\eqref{eq:inverse_problem},
on the other hand, relies on the statistical properties of the noise
and on the specification of a prior probability distribution which encapsulates \emph{a priori} knowledge on the unknown parameter,
as shown in~\cref{sub:overview}.
In the Bayesian framework, the optimization problem~\eqref{eq:classical} is relevant
with $X = \Gamma$ and with the Tikhonov--Phillips regularization $R(\theta) = \eucnorm{\theta - m}[\Sigma]^2$.
In this case, the solution to~\eqref{eq:classical} admits a clear interpretation:
it is the pointwise maximizer of the Bayesian posterior~\eqref{eq:bayesian_posterior},
so it can be viewed as the most likely value of the parameter given the data.
See~\cite{MR2652785} for more details on the connection between the classical and Bayesian approaches.
Often, one is interested not in a point estimator but in the statistical properties of the Bayesian posterior,
which can be used, for example, for the derivation of confidence intervals.
In most applications, the dimension of the parameter space is large,
so it is necessary to generate samples from the Bayesian posterior in order to calculate its statistical properties.

Several methods can be employed for solving the inverse problem~\eqref{eq:inverse_problem} via the optimization problem~\eqref{eq:classical}.
In a number of important applications of inverse problems,
such as parameter estimation in climate models~\cite{2020arXiv201213262D},
the derivatives of the forward operator $G$ are unavailable or too computationally expensive to obtain,
so, in this literature review,
we only briefly review gradient-based methods and we focus mostly on derivative-free methods.

When the derivatives of the forward operator $G$ are available,
a natural approach is to employ the gradient descent algorithm or one of its variants:
we mention, for example, the conjugate gradient descent~\cite{MR0060307}, the stochastic gradient descent~\cite{MR42668,MR1993642},
the Barzilai--Borwein method~\cite{MR967848},
and other gradient-based optimization techniques that rely on interacting particle systems~\cite{NIPS2015_d18f655c,2020arXiv200707704B,borovykh2020interact}.
One may also recur to the Newton or Gauss--Newton methods and their variants~\cite{MR2299676,akcelik2002parallel},
or to methodologies based on the Levenberg--Marquadt method~\cite{MR10666,MR153071,MR1435869}.

When the derivatives of the forward operator $G$ are unavailable, on the other hand,
derivative-free methods are required for solving the optimization problem~\eqref{eq:classical}.
A comprehensive presentation of standard derivative-free optimization methods is given in~\cite{MR2487816},
and a review focusing on recent trends and developments in this field is given in~\cite{MR3963507}.
A simple approach,
which was popular in many of the early works on derivative-free optimization,
is to employ a finite difference gradient approximation in place of the exact gradient in a derivative-based method;
see, for example,~\cite{MR303723,MR697184}.
Many other general-purpose derivative-free optimization methods can be employed for solving~\eqref{eq:classical},
and we mention, for example,
simulated annealing~\cite{MR702485},
particle-swarm optimization~\cite{kennedy1995particle},
and consensus-based optimization (CBO)~\cite{MR3597012,MR3804923}.
One may also use methods based on the ensemble Kalman filter,
which rely on the quadratic structure of the loss function in~\eqref{eq:classical}.
Ensemble Kalman methods, introduced for dynamical state estimation in~\cite{MR2555209},
were extended as derivative-free Bayesian inverse problem solvers
in~\cite{Chen2012,MR3036193}; they were modified to become derivative-free optimizers in~\cite{MR3041539},
a method we refer to as Ensemble Kalman Inversion (EKI).
The EKI has been shown,
both theoretically in simple settings and empirically,
to perform very well in the context of inverse problems~\cite{MR3041539,MR3654885,MR3764752},
and it has also been applied successfully for training neural networks~\cite{MR3998631}.
A recent variant on the EKI, unscented Kalman inversion (UKI),
shows significant promise for problems in which the parameter dimension is low,
but the forward model is expensive to evaluate and hard to differentiate \cite{unscentedki}.
Another alternative is the method developed in~\cite{2018arXiv180508034H},
which is based on similar ideas for gradient approximation but aims to drive a single distinguished particle to the optimizer.

Likewise, there exist several methods for solving~\eqref{eq:inverse_problem} via the Bayesian approach,
i.e.\ for generating samples from the Bayesian posterior~\eqref{eq:bayesian_posterior}.
If the derivatives of the log-posterior are available,
the simplest option is to rely on a Langevin diffusion of the type~\eqref{eq:overdamped_Langevin},
which enjoys the property of transforming any initial distribution into the Bayesian posterior~\eqref{eq:bayesian_posterior} in the longtime limit $t \to \infty$.
One may also employ higher-dimensional stochastic dynamics that admit the Bayesian posterior as a marginal of their ergodic measure,
such as the underdamped Langevin dynamics~\cite{MR3509213,pavliotis2011applied} or the generalized Langevin dynamics~\cite{MR2793823,MR3986068,GPGSUV_2020}.

Another standard and related approach for sampling from a high-dimensional probability density is to use a Markov chain Monte Carlo method (MCMC),
i.e.\ to construct a Markov chain whose unique invariant distribution is the target density.
To this end, the most widely used method is the Metropolis--Hastings algorithm~\cite{metropolis1953equation,MR3363437} (MH).
All that is required to define a MH algorithm is a proposal distribution,
which may or may not be based on the derivatives of the target density (or of its logarithm).
We mention, for example, the Metropolis-adjusted Langevin dynamics (MALA, which uses the derivative) and the random walk MH method (RWMH, which does not).
There is also a substantial literature on the computation of,
or exploiting, Gaussian approximations of the posterior;
see~\cite{MR3233941} and the references therein.
There is extensive literature on the convergence properties and optimal parametrization of these methods,
and on their connections with overdamped Langevin diffusions~\cite{MR1440273,MR1428751,MR1888450} in the high-dimensional limit~\cite{MR1440273,MR1428751,MR1888450}.
See also~\cite{MR3349007} for a study of the connection of high-dimensional RWMH with overdamped Langevin dynamics in the transient regime,
and~\cite{MR2583309} for a proof of convergence of MALA to an overdamped Langevin diffusion in the small timestep limit in fixed dimension.

In recent years, there has also been significant activity devoted to developing sampling methods based on interacting particle systems,
which can leverage recent advances in parallel computing.
These include, for example,
sequential Monte Carlo samplers~\cite{del2006sequential},
interacting particle MCMC methodologies~\cite{MR3060209} and~\cite{MR3747563}, Stein variational gradient descent~\cite{lu2018scaling,Liu2016},
the ensemble Kalman sampler (EKS)~\cite{2019arXiv190308866G},
and affine-invariant Langevin dynamics (ALDI)~\cite{garbuno2020affine}.
The derivative-free formulations of the latter two methods were proposed specifically for Bayesian inverse problems
-- they rely on the least-squares structure~\eqref{eq:reg_least_squares} of the log-posterior --
and they were shown to produce good approximate samples of the posterior distribution at a relatively low computational cost.
Both EKS and ALDI are strongly related to~\eqref{eq:overdamped_Langevin}:
in the linear setting, they are based on a system of preconditioned overdamped Langevin diffusions,
with a time-dependent preconditioner given by the covariance of the ensemble.
ALDI improves upon EKS by incorporating a correction term which guarantees that
the ergodic measure of the finite-dimensional particle system is the product measure of $J$ copies of the target distribution,
where $J$ denotes the number of particles.
A variant on the EKS using the unscented transform (UKS) has recently been proposed \cite{unscentedki},
and a generalization of CBO to sampling (CBS) has recently been proposed~\cite{cbs};
both UKS and CBS are derivative-free.

The parallel MCMC method of~\cite{MR3747563},
as well as the ensemble Kalman based methods for Bayesian inverse problems,
i.e.\ EKI, EKS and ALDI,
enjoy the property of being affine invariant in the sense of~\cite{MR2600822};
see also~\cite{greengard} and~\cite{MR3362507}.
As the terminology indicates,
affine invariant methods are insensitive to affine transformations of the regularized least-squares functional~$\Phi_R$,
which makes them particularly well-suited in cases where~$\Phi_R$ exhibits strong anisotropy at its minimizer.
The affine invariance of EKS, ALDI and the ensemble Kalman--Bucy filter was demonstrated carefully in~\cite{garbuno2020affine},
where the authors also show that the Bayesian posterior is invariant and ergodic under ALDI.
Around the same time, it was observed that
the rate of convergence to equilibrium for the nonlocal PDEs associated with EKI and EKS
was independent of the parameters of the regularized least-squares functional $\Phi_R$,
in the simple case of a linear forward model~\cite{2019arXiv190308866G,2019arXiv191007555C};
this independence is in fact a consequence of affine invariance,
although this fact is not identified in these references.

As mentioned in~\cref{sub:overview},
the method we present in this paper is based on a fast/slow system of stochastic differential equations,
and it may be used both for MAP estimation and posterior sampling.
Multiscale methods have been used before for optimization purposes.
A multiscale dynamics is employed~\cite{MR3819749} for smoothing the loss function associated with deep neural networks,
and the method is revisited later in~\cite{2019arXiv190504121K}.
A similar multiscale dynamics is also employed in~\cite{quer2018importance} for calculating convolutions,
with the aim of reducing metastability in the context of molecular dynamics.
See also~\cite{MR1402204} for information on how smoothing the objective function by convolution with a Gaussian kernel
can be helpful in optimization schemes.
The method we propose in this paper is based on similar ideas,
in that it employs a fast/slow system SDEs for approximating the gradient of the loss function,
but it is gradient-free and relies on a different multiscale system.
In addition, we demonstrate how preconditioning can be incorporated in the method
in order to approach the solution to~\eqref{eq:overdamped_Langevin} with an appropriate symmetric, positive definite matrix~$K$.
We also show how a good preconditioner~$K$ can be constructed using information from ALDI;
similar approaches can be used based on information obtained through
EKS, UKS or~CBS.

Our work is aimed at sampling posterior distributions which are not Gaussian.
The paper \cite{MR3400030} demonstrates clearly that
standard ensemble Kalman based methods for inverse problems do not reproduce the correct posterior distribution in the large particle size limit in the non-Gaussian setting.
Although there is interesting empirical work which addresses this shortcoming through iteration of ensemble Kalman filters~\cite{sakov2012iterative,bocquet2014iterative},
it is not clear that this methodology may be applied systematically to arbitrary inverse problems.
Our proposed methodology, on the other hand,
addresses shortcomings of standard ensemble methods in this setting and is founded on refineable approximations which,
in certain limits, will reproduce the true posterior distribution.

Although our method applies, in principle, to multimodal distributions,
it will not be efficient in this scenario:
this is because our method is based on approximation of an overdamped
Langevin equation, and hence may suffer from metastability issues when
the posterior is multi-modal.
For this reason our focus is on non-Gaussian unimodal distributions;
our primary purpose in this paper is not to address multi-modality.
Indeed it is likely that a generic solution to the problem of multi-modality will be very hard to find~\cite{woodard2009sufficient}
-- problem-specific multi-modal approaches are more likely to yield fruitful research.
There is, however, interesting work by~\cite{LMS16}
which uses a localized sample covariance matrix in a parallel MCMC method to address this problem in a generic fashion;
we briefly touch on this in~\cref{sub:accelerating_convergence_using_preconditioning}.
The idea of using localized covariances is incorporated into ALDI in~\cite{MR4248267} in order to extend the range of applicability of the method beyond the unimodal setting.

\subsection{Our Contributions}%
\label{sub:our_contributions}

This paper, then, is focused on the construction of provably refineable derivative-free methods for Bayesian inverse problems
characterized by unimodal but non-Guassian posterior distributions.
The primary contributions in this paper are the following:
\begin{itemize}
    \item
        We present a novel method based on a multiscale dynamics for MAP estimation and posterior sampling in Bayesian inverse problems.
        We discuss possible variations of the method and present a fully practical numerical discretization.

    \item
        In addition to motivating the method with formal arguments,
        we prove the pathwise convergence of the solution it produces to a gradient descent or to an overdamped Langevin diffusion,
        depending on whether the method is used for optimization or sampling, respectively.
        We also obtain a strong convergence estimate for the numerical discretization of the multiscale dynamics.

    \item
        We present numerical experiments demonstrating the efficiency of the method,
        for the purposes of both sampling and optimization.
        We consider first a standard low-dimensional test problem and then a high-dimensional inverse problem where the forward model requires the solution of an elliptic~PDE.

    \item
        We show how a significant improvement in performance can be obtained by preconditioning the method using information from ALDI.
\end{itemize}
The rest of the paper is organized as follows.
In \cref{sec:presentation_of_the_method},
we introduce the multiscale method and present our main results.
In \cref{sec:numerical_experiments}, we present numerical experiments demonstrating the efficacy of the method,
both for low-dimensional and high-dimensional parameter spaces,
and we show how preconditioning can be incorporated in the method.
In \Cref{sec:proof_of_main_results}, we prove our main convergence results.
\Cref{sec:conculsions} is reserved for conclusions and perspectives for future work.

\section{Presentation of the Method and Main Results}%
\label{sec:presentation_of_the_method}
This section is organized as follows:
in \cref{sub:continuous_time_dynamics},
the multiscale method is presented as a continuous-time dynamics and
motivated by formal arguments.
In \cref{sub:numerical_discretization}, a fully practical time discretization of the continuous dynamics is presented.
\Cref{sub:main_results} then presents the statements of our main results,
the proofs of which are given in~\cref{sec:proof_of_main_results}.

\subsection{Continuous-time Dynamics}%
\label{sub:continuous_time_dynamics}
Our method is based on a multiscale system of stochastic differential equations (SDEs):
A slow variable is employed for the purposes of finding the MAP estimator or sampling from the Bayesian posterior,
and several fast variables provide information on the variation of the least-squares functional in the vicinity of the slow variable.
At any time, the drift for the slow variable is calculated based on the values of the forward functional at the positions of these fast explorers,
using a projected gradient approximation similar in structure to that used in the methods for inversion and sampling based on the ensemble Kalman filter (EnKF);
see~\cite{MR3654885} (in the context of optimization) and \cite{2019arXiv190308866G} (in the context of sampling).
The idea of employing an approximation based on ensemble Kalman methods for specifying the drift of a single distinguished particle
is inspired by the paper~\cite{2018arXiv180508034H},
in which the authors proposed to use stochastic differences as a surrogate for gradients in an ensemble based optimization context.

In most applications, the space of the unknown parameter is $\real^d$ but,
for simplicity of the analysis presented in~\cref{sec:proof_of_main_results},
we also consider the case where this space is the $d$-dimensional torus $\torus^d$.
Therefore, we denote the parameter space by $\statespace^d$, with $\statespace = \real$ or $\statespace = \torus$.
At the continuous-time level, our method is based on the following system of interacting SDEs:
\begin{subequations}
\label{eq:multiscale:evolution_theta}%
\begin{align}
    \label{eq:multiscale:evolution_theta_theta}
    \dot {\theta}
    &= - \frac{1}{J \sigma^2} \sum_{j=1}^{J} \eip*{G(\theta^{(j)}) - G(\theta)}{G(\theta) - y}[\Gamma] \, (\theta^{(j)} - \theta)
    -  C(\Xi) \Sigma^{-1} (\theta - m) + \nu \sqrt{2 C(\Xi)} \, \dot {w}, \\
    \theta^{(j)} &= \theta + \sigma \,  \xi^{(j)}, \qquad \qquad \qquad \qquad \qquad \qquad \qquad \qquad \qquad \qquad \qquad \quad
      j = 1, \dotsc, J, \\
    \dot {\xi}^{(j)} &= - \frac{1}{\delta^2} \, \xi^{(j)} + \sqrt{\frac{2}{\delta^2}} \, \dot {w}^{(j)},
    \qquad \xi^{(j)}(0) \sim \mathcal N(0, {I_d}),  \qquad \qquad \qquad \qquad
      j = 1, \dotsc, J,
\end{align}
\end{subequations}
where $\theta \in \statespace^d$, $\Xi = (\xi^{(1)}, \dotsc, \xi^{(J)}) \in (\real^d)^J$,
$I_d$ is the  $\real^{d \times d}$ is the identity matrix,
the processes $w$ and $\{w^{(j)}\}_{j=1}^{J}$ are independent standard Brownian motions and
\begin{align*}
 &C(\Xi) = \frac{1}{J} \sum_{j=1}^{J} (\xi^{(j)} \otimes \xi^{(j)}).
\end{align*}
The processes $\{\xi^{(j)}\}_{j = 1}^{J}$ are stationary Ornstein--Uhlenbeck (OU) processes
with invariant measure~$\mathcal N(0, {I_d})$ and autocorrelation function $\e^{-\abs{t}/\delta^2} {I_d}$.
The parameter $\delta$ can therefore be viewed as the square root of the characteristic time scale of the fast processes.
The coefficient $\nu \in \{0, 1\}$ controls whether noise should be included in the equation for $\theta$:
if $\nu = 0$, then~\eqref{eq:multiscale:evolution_theta} is a method for finding the minimizer of the regularized least-squares functional $\Phi_R$,
i.e.\ the MAP estimator;
if $\nu = 1$, then~\eqref{eq:multiscale:evolution_theta} is a method for sampling from the posterior distribution $\frac{1}{Z}\e^{-\Phi_R}$,
where $Z$ is the normalization constant given in~\eqref{eq:bayesian_posterior}.

Note that~\eqref{eq:multiscale:evolution_theta} can also be employed without the prior regularization and with $\nu = 0$,
as a method for finding the minimizer of the non-regularized least-squares functional $\Phi$.
This formally corresponds to taking the prior covariance to be infinite, i.e.\ $\Sigma = \infty {I_d}$,
and can be useful when prior knowledge of the unknown parameter $\theta$ is not available,
or is not needed because the problem is over-determined.
In this case, any parameter $\theta \in G^{-1}(y)$ is a steady state of~\eqref{eq:multiscale:evolution_theta},
which is not the case for the alternative derivative-free formulation~\eqref{eq:alternative} we present below.

\begin{remark}[Connection with the stochastic gradient descent]
    The method~\eqref{eq:multiscale:evolution_theta} is most useful with $\sigma$ small,
    in which case,
    neglecting quadratic or smaller terms in $\sigma$,
    the following approximation holds:
    \begin{equation}
        \label{eq:linear_approximation}
        G(\theta^{(k)}) - G(\theta) \approx (\theta^{(k)} - \theta) \cdot \grad G(\theta).
    \end{equation}
    Using this approximation,
    we can rewrite the equation for $\theta$ in~\eqref{eq:multiscale:evolution_theta} as
    \begin{align*}
        \dot {\theta} &\approx - \frac{1}{J} \sum_{k=1}^{J} \left( \xi^{(k)} \otimes \xi^{(k)} \right) \, \grad \Phi(\theta)
        - C(\Xi) \Sigma^{-1} (\theta - m) + \nu \sqrt{2 C(\Xi)} \, \dot {w} \\
                    &= - C(\Xi) \, \grad \Phi_R(\theta) + \nu \sqrt{2 C(\Xi)} \, \dot {w}.
    \end{align*}
    The term $C(\Xi) \, \grad \Phi_R(\theta)$ can be viewed as a projection of $\grad \Phi_R(\theta; y)$ on the subspace spanned by
    $\{ \xi^{(1)}, \dotsc, \xi^{(J)} \}$,
    which shows a link with the stochastic gradient descent algorithm.
    For large $J$, it holds formally that $C(\Xi) \approx {I_d}$ at all times,
    so the equation for $\theta$ reduces to a gradient descent when $\nu = 0$,
    or to the overdamped Langevin equation if $\nu = 1$,
    both both with respect to the potential $\Phi_R$.
\end{remark}

\begin{remark}
    Note that~\eqref{eq:linear_approximation} holds exactly when $G$ is linear,
    for all $\sigma > 0$.
    In this case and in the presence of noise (i.e.\ when $\nu = 1$), equation~\eqref{eq:multiscale:evolution_theta} admits as invariant measure the distribution
    \begin{align}
        \label{eq:invariant_measure}
        \rho_{\infty}(\theta, \Xi) = \frac{1}{Z} \, \exp \left( -\Phi_R(\theta) \right) \, g(\xi^{(1)}; 0, {I_d}) \dotsc \, g(\xi^{(J)}; 0, {I_d}).
    \end{align}
    The associated marginal distribution for $\theta$ is given by $\frac{1}{Z}\e^{-\Phi_R}$,
    which is precisely the Bayesian posterior distribution.
    To show that~\eqref{eq:invariant_measure} is indeed the unique invariant distribution when $G$ is linear,
    we note that the Fokker--Planck operator associated with~\eqref{eq:multiscale:evolution_theta} in this case is given by~\cite[Chapter 4]{pavliotis2011applied}
    \begin{align*}
        \mathcal L^{\dagger} \rho
        &= \grad_{\theta} \cdot \Bigl(  C(\Xi) \bigl( \grad \Phi_R(\theta) \rho + \grad_{\theta} \rho  \bigr) \Bigr)
        + \frac{1}{\delta^2} \sum_{j=1}^{J} \grad_{\xi^{(j)}} \cdot \left( \xi^{(j)} \rho + \grad_{\xi^{(j)}} \rho\right) \\
        &=  \grad_{\theta} \cdot \left(  \rho_{\infty} C(\Xi)  \grad_{\theta} \left(\frac{\rho}{\rho_{\infty}} \right)  \right)
        + \frac{1}{\delta^2} \sum_{j=1}^{J} \grad_{\xi^{(j)}} \cdot \left( \rho_{\infty} \grad_{\xi^{(j)}} \left( \frac{\rho}{\rho_{\infty}}\right)\right)
        = \grad \cdot \left( \rho \mathcal D \grad \log \left( \frac{\rho}{\rho_{\infty}}\right) \right),
    \end{align*}
    where $\mathcal D$ is the $\real^{d(J + 1) \times d(J + 1)}$ block diagonal matrix with diagonal blocks $C(\Xi), \frac{1}{\delta^2} I_d, \dotsc, \frac{1}{\delta^2} I_d$.
    It is clear that $\rho_{\infty}$ in~\eqref{eq:invariant_measure} is in the kernel of this operator.
    To show formally that the invariant measure is unique,
    it suffices to multiply both sides of the equation $\mathcal L^\dagger \rho = 0$ by $\rho/\rho_{\infty}$ and to integrate over the state space $\statespace^d \times (\real^d)^J$,
    which gives
    \begin{align*}
        \int_{\statespace^d} \int_{\real^d} \dots \int_{\real^{d}} \left( C(\Xi) \eucnorm{\grad_{\theta} \left(\frac{\rho}{\rho_{\infty}} \right)}^2
        + \frac{1}{\delta^2}\sum_{j=1}^{J} \eucnorm{\grad_{\xi^{(j)}}  \left(\frac{\rho}{\rho_{\infty}}\right)}^2 \right) \, \rho_{\infty}(\theta, \Xi) \, \d \xi^{(1)} \dots \d \xi^{(J)} \, \d \theta = 0,
    \end{align*}
    and therefore $\rho = \rho_{\infty}$ necessarily.
\end{remark}


For the purposes of analysis,
we also consider a simplified version of~\eqref{eq:multiscale:evolution_theta}
in which the coefficient of the noise is independent of the fast processes $\xi^{(1)}, \dotsc, \xi^{(J)}$:
\begin{subequations}
    \label{eq:alternative_derivative-free}
    \begin{align}
        \label{eq:alternative_theta}%
        \dot {\theta} &= - \frac{1}{J \sigma^2} \sum_{j=1}^{J} \eip*{G(\theta^{(j)}) - G(\theta)}{G(\theta) - y}[\Gamma] \, (\theta^{(j)} - \theta)
        - C(\Xi) \Sigma^{-1} (\theta - m) + \nu \sqrt{2} \dot {w},\\
        \label{eq:alternative_theta_other}%
    \theta^{(j)} &= \theta + \sigma \, \xi^{(j)}, \qquad j = 1, \dotsc, J.\\
        \label{eq:alternative_theta_xi}%
    \dot {\xi}^{(j)} &= - \frac{1}{\delta^2} \, \xi^{(j)} + \sqrt{\frac{2}{\delta^2}} \, \dot {w}^{(j)}.
    \end{align}
\end{subequations}
Equation~\eqref{eq:alternative_theta} admits the same formal limit as $J \to \infty$ or $\delta \to 0$ as~\eqref{eq:multiscale:evolution_theta_theta},
but it is simpler to analyze because the noise is additive.
We note, however,
that the invariant measure associated to~\cref{eq:alternative_theta,eq:alternative_theta_other,eq:alternative_theta_xi} for finite $J$ differs from~\eqref{eq:invariant_measure},
even in the case of a linear forward model.

\begin{remark}
    [Second alternative derivative-free formulation]
    \label{rem:etic}
    Instead of \eqref{eq:multiscale:evolution_theta} or~\eqref{eq:alternative_derivative-free},
    we could also use a system of equations of the form
\begin{subequations}
\label{eq:alternative}%
\begin{align}
    \label{eq:alternative_a}%
    \dot {\theta} &= - \frac{1}{J \sigma^2} \sum_{j=1}^{J} \, \left(\Phi_R(\theta^{(j)}) - \Phi_R(\theta)\right)
    \left(\theta^{(j)} - \theta\right) + \nu \sqrt{2 C(\Xi)} \, \dot {w}, \\
    \label{eq:alternative_b}%
    \theta^{(j)} &= \theta + \sigma \, \xi^{(j)}, \qquad j = 1, \dotsc, J,\\
    \label{eq:alternative_c}%
    \dot {\xi}^{(j)} &= - \frac{1}{\delta^2} \, \xi^{(j)} + \sqrt{\frac{2}{\delta^2}} \, \dot {w}^{(j)}.
\end{align}
\end{subequations}
This formulation has two advantages over~\cref{eq:multiscale:evolution_theta,eq:alternative_derivative-free}:
it does not require the prior distribution to be Gaussian,
and it does not rely on the specific quadratic structure of $\Phi_R$ in~\eqref{eq:reg_least_squares},
making it more generally applicable.
However, like~\eqref{eq:alternative_derivative-free},
the system of equations~\eqref{eq:alternative} does not admit~\eqref{eq:invariant_measure} as invariant measure when~$J$ is finite,
not even in the case of a linear forward model.
\end{remark}
\newcommand{\approxtheta}[1]{\hat {\theta}_{#1}}
\newcommand{\approxthetatilde}[1]{\tilde {\theta}_{#1}}
\newcommand{\approxXi}[1]{\hat {\Xi}_{#1}}
\newcommand{\approxxi}[2]{\hat {\xi}_{#1}^{(#2)}}
\newcommand{\approxupsilon}[2]{\hat {\upsilon}_{#1}^{(#2)}}
\newcommand{\approxUpsilon}[1]{\hat {\Upsilon}_{#1}}
\newcommand{\approxvartheta}[1]{\hat {\vartheta}_{#1}}

\subsection{Numerical Discretization}%
\label{sub:numerical_discretization}

To integrate~\eqref{eq:multiscale:evolution_theta} numerically,
we employ the Euler--Maruyama method for $\theta$ and a closed formula for the exact (in law) solution of the OU process for $\{\xi^{(j)}\}_{j=1}^J$.
We use the notation~$\Delta$ to denote the time step and the notation $(\approxtheta{n}, \approxXi{n})$,
with $\approxXi{n} = (\approxxi{n}{1}, \dotsc, \approxxi{n}{J})$,
to denote the numerical approximation of $(\theta_{n \Delta}, \Xi_{n \Delta})$,
i.e.\ the numerical approximation of the continuous-time solution at time $n \Delta$.
We also denote by $N$ the total number of iterations and by $T = N \Delta$ the final time of the simulation.
The numerical scheme we propose reads
\begin{subequations}
\label{eq:multiscale:evolution_theta_discrete}%
\begin{align}
    \label{eq:multiscale:evolution_theta_discrete_theta}%
    \approxtheta{n+1}
    &= \approxtheta{n}
    - \frac{1}{J \sigma} \sum_{j=1}^{J} \eip*{G(\approxtheta{n} + \sigma \approxxi{n}{j}) - G(\approxtheta{n})}{G(\approxtheta{n}) - y}[\Gamma] \, \approxxi{n}{j} \, \Delta \\
    \notag
    &\qquad -  C(\approxXi{n}) \Sigma^{-1} (\approxtheta{n} - m) \Delta + \nu \sqrt{2 D(\approxXi{n}) \Delta} \, x_n, \\
    \label{eq:multiscale:evolution_theta_discrete_xis}%
    \approxxi{n+1}{j} &= \e^{-\frac{\Delta}{\delta^2}}\approxxi{n}{j} + \sqrt{1 - \e^{-\frac{2\Delta}{\delta^2}}} x_n^{(j)}, \qquad \approxxi{0}{j} \sim \mathcal N(0, I_d), \qquad j = 1, \dotsc, J,
\end{align}
\end{subequations}
where $x_n = \Delta^{-1/2}(w_{(n+1)\Delta} - w_{n \Delta}) \sim \mathcal N(0, I_d)$ and $x_n^{(j)}$, for $n = 0, \dotsc, N-1$ and $j = 1, \dotsc, J$,
are independent $\mathcal N(0, {I_d})$ random variables.
Here $D(\approxXi{n}) = C(\approxXi{n})$ or $D(\approxXi{n}) = {I_d}$
depending on whether a solution to~\eqref{eq:multiscale:evolution_theta} or to~\eqref{eq:alternative_derivative-free} is sought,
respectively.
At the numerical level, we can consider the limit $\delta \to 0^+$,
in which case the numerical scheme~\eqref{eq:multiscale:evolution_theta_discrete} simplifies to
\begin{equation}
\label{eq:multiscale:evolution_theta_discrete_simplified}%
\begin{aligned}
    \approxtheta{n+1}
    &= \approxtheta{n}
    - \frac{1}{J \sigma} \sum_{j=1}^{J} \eip*{G(\approxtheta{n} + \sigma \approxxi{n}{j}) - G(\approxtheta{n})}{G(\approxtheta{n}) - y}[\Gamma] \, \approxxi{n}{j} \, \Delta \\
    &\qquad -  C(\approxXi{n}) \Sigma^{-1} (\approxtheta{n} - m) \Delta + \nu \sqrt{2 D(\approxXi{n}) \Delta} \, x_n,
\end{aligned}
\end{equation}
where $\approxxi{n}{j}$, for $n = 0, \dotsc, N-1$ and $j = 1, \dotsc, J$, are drawn independently from $\mathcal N(0, {I_d})$.
In the noise-free case $\nu = 0$,
the algorithm in form~\eqref{eq:multiscale:evolution_theta_discrete_simplified} is precisely
what was proposed and implemented in the paper~\cite{2018arXiv180508034H},
in the context of optimizing parameters of neural networks, and inspired the work presented here.
This algorithm is simpler to implement and analyze than~\eqref{eq:multiscale:evolution_theta_discrete},
but may not always be the best option for the purposes of sampling and optimization.
Although we have found small values of $\delta$ to be preferable in our numerical experiments,
it is indeed conceivable that problems with rugged energy landscapes may benefit from
stronger correlation between successive descent directions.
We leave this question for future work and,
in all the numerical experiments presented in~\cref{sec:numerical_experiments},
we use the scheme~\eqref{eq:multiscale:evolution_theta_discrete} with~$\delta > 0$ (but in most examples very small).
We do, however, analyze the convergence of~\eqref{eq:multiscale:evolution_theta_discrete_simplified} theoretically in \cref{sec:proof_of_main_results},
as a first step towards proving the convergence of~\eqref{eq:multiscale:evolution_theta_discrete}.

\subsection{Main Results}%
\label{sub:main_results}
In this section, we present and comment on our main results, which are proved in \cref{sec:proof_of_main_results}.
In order to simplify the analysis,
we assume throughout that the state space of the parameter is the $d$-dimensional torus $\torus^d$, rather than $\real^d$,
and that the prior distribution is the uniform density over~$\torus^d$,
in which case $\Phi_R = \Phi$.
In all our results, the minimum regularity requirement on the forward model is that $G \in C^{2}(\torus^d, \real^K)$,
but we also present refined estimates in the case of a more regular forward model, namely if $G \in C^{3}(\torus^d, \real^K)$.
In particular, the case of a linear forward model is excluded.

We first present a strong convergence result for the dynamics~\eqref{eq:alternative_derivative-free},
which coincides with the dynamics~\eqref{eq:multiscale:evolution_theta} in the absence of noise.
More precisely, \cref{thm:convergence_continuous_equations} establishes, in the joint limit~$\delta \to 0$ and $\sigma \to 0$,
the pathwise convergence of the stochastic process $\{\theta_t\}_{t \in [0, T]}$
to the solution $\{\vartheta_t\}_{t \in [0, T]}$ of the averaged equation
\begin{equation}
    \label{eq:averaged_equation}
    \dot {\vartheta_t} = - \grad \Phi_R(\vartheta_t) + \nu \sqrt{2} \, \dot w_t, \qquad \vartheta_0 = \theta_0.
\end{equation}
Note that the Brownian motion and initial condition in this equation are the same as in~\eqref{eq:alternative_derivative-free},
which allows to establish a strong convergence estimate.
\begin{theorem}
    \label{thm:convergence_continuous_equations}
    Assume that $\{\theta_t, \xi^{(1)}_t, \dotsc, \xi^{(J)}_t\}_{t \in [0, T]}$ is a solution to~\eqref{eq:alternative_derivative-free}
    supplemented with any initial condition $(\theta_0, \xi^{(1)}_0, \dotsc, \xi^{(J)}_0)$ such that
    \begin{equation}
        \label{eq:initial_condition_xi}
        (\xi^{(1)}_0, \dotsc, \xi^{(J)}_0) \sim \mathcal N(0, I_d) \times \dotsb \times \mathcal N(0, I_d).
    \end{equation}
    If $G \in C^{2}(\torus^d, \real^K)$,
    then for any $p > 1$, any $J > 0$ and any $T > 0$ there is $C = C(p, T, J)$ such that
    \begin{equation}
        \label{eq:convergence_continuous}
        \forall (\delta, \sigma) \in \real_+ \times \real_+, \qquad
        \expect \left( \sup_{0 \leq t \leq T} \abs{\theta_t - \vartheta_t}^p \right) \leq C (\delta^p + \sigma^{\beta p}).
    \end{equation}
    The exponent $\beta$ is defined as follows:
    \begin{equation}
        \label{eq:definition_beta}
        \beta =
        \begin{cases}
            1 &\text{if } G \in C^{2}(\torus^d, \real^K), \\
            2 &\text{if } G \in C^{3}(\torus^d, \real^K).
        \end{cases}
    \end{equation}
\end{theorem}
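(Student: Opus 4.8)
\section*{Proof proposal}

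The plan is to work with the integrated form of the difference $\theta_t - \vartheta_t$ and to control separately the three mechanisms by which the finite-difference drift in \eqref{eq:alternative_theta} deviates from $-\grad\Phi$. On $\torus^d$ with uniform prior we have $\Phi_R = \Phi$ and the prior term in \eqref{eq:alternative_theta} disappears; moreover compactness of the torus renders $G$ and its derivatives (up to the assumed order) bounded, so $\grad\Phi$ is globally Lipschitz with some constant $L$ and $\hessian\Phi$ is bounded. Subtracting \eqref{eq:averaged_equation} from \eqref{eq:alternative_theta}, the shared Brownian motion and initial condition cancel the noise, leaving
\[
    \theta_t - \vartheta_t = \int_0^t \bigl( b(\theta_s, \Xi_s) + \grad\Phi(\vartheta_s) \bigr) \, \d s,
\]
where $b$ is the finite-difference drift. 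I would split the integrand as
\[
    \underbrace{b(\theta_s, \Xi_s) + C(\Xi_s)\grad\Phi(\theta_s)}_{\text{finite-difference remainder}}
    \;+\; \underbrace{\bigl( I_d - C(\Xi_s) \bigr)\grad\Phi(\theta_s)}_{\text{averaging fluctuation}}
    \;+\; \underbrace{\grad\Phi(\vartheta_s) - \grad\Phi(\theta_s)}_{\text{Lipschitz term}}.
\]
The Lipschitz term is bounded by $L \abs{\theta_s - \vartheta_s}$ and is reserved for a Grönwall closure; the other two must be shown to be $O(\sigma^\beta)$ and $O(\delta)$, respectively, in the $L^p$-of-supremum sense.

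For the finite-difference remainder I would Taylor expand $G(\theta + \sigma \xi^{(j)})$ about $\theta$. Using $\grad\Phi(\theta) = \grad G(\theta)^{\t} \Gamma^{-1}\bigl(G(\theta) - y\bigr)$, the linear term reconstructs exactly $-C(\Xi)\grad\Phi(\theta)$, so the remainder is governed by the Taylor tail. When $G \in C^{2}$ the quadratic Lagrange remainder gives a pointwise bound $\abs{b(\theta,\Xi) + C(\Xi)\grad\Phi(\theta)} \leq C \sigma \, (1 + \abs{\Xi}^3)$; integrating in time and using only the stationary moments $\expect \abs{\xi^{(j)}_s}^q = O(1)$ (no supremum over $\Xi$ enters here) yields an $O(\sigma)$ contribution, i.e.\ $\beta = 1$. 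When $G \in C^{3}$ I would carry the expansion one order further: the new $O(\sigma)$ term is cubic, hence odd, in each $\xi^{(j)}$ and therefore has zero average against $\mathcal N(0, I_d)$, while the genuine remainder is $O\bigl(\sigma^2 (1 + \abs{\Xi}^4)\bigr)$. The $O(\sigma^2)$ piece is handled as above, and the mean-zero $O(\sigma)$ piece is passed to the averaging argument below, producing an $O(\sigma \delta)$ contribution that is absorbed into $O(\delta + \sigma^2)$; this is what upgrades the exponent to $\beta = 2$.

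The core of the proof, and the step I expect to be the main obstacle, is the averaging fluctuation, because one cannot simply invoke ergodicity of $C(\Xi)$ when it is contracted against the time-dependent, path-correlated factor $\grad\Phi(\theta_s)$. I would exploit that the fast processes are explicit Ornstein--Uhlenbeck diffusions, for which the Poisson equation is solved in closed form: writing $\mathcal L_\Xi = \sum_{j=1}^{J} \bigl( -\delta^{-2} \xi^{(j)} \cdot \grad_{\xi^{(j)}} + \delta^{-2} \laplacian_{\xi^{(j)}} \bigr)$ for the generator of $\Xi$, a direct computation gives $\mathcal L_\Xi \bigl( \tfrac{\delta^2}{2} C(\Xi) \bigr) = I_d - C(\Xi)$. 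Setting $\chi(\Xi) = \tfrac{\delta^2}{2} C(\Xi)$ and applying Itô's formula to $s \mapsto \chi(\Xi_s) \grad\Phi(\theta_s)$, the independence of $w$ and $\{w^{(j)}\}$ kills the cross-variation and gives
\[
    \int_0^t \bigl( I_d - C(\Xi_s) \bigr) \grad\Phi(\theta_s) \, \d s
    = \Bigl[ \chi(\Xi_s)\grad\Phi(\theta_s) \Bigr]_0^t
    - \int_0^t \chi(\Xi_s) \, \d\bigl(\grad\Phi(\theta_s)\bigr) - M_t,
\]
where $M_t$ is the martingale generated by the diffusion of $\Xi$. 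Since $\chi = O(\delta^2)$, the boundary and the $\theta$-drift terms are $O(\delta^2)$; and since $\grad_\Xi \chi = O(\delta^2)$ while the OU diffusion coefficient is $O(\delta^{-1})$, the quadratic variation of $M$ is $O(\delta^2)$, so Burkholder--Davis--Gundy gives $\expect \sup_t \abs{M_t}^p = O(\delta^p)$. This is precisely the origin of the $\delta^p$ term in \eqref{eq:convergence_continuous}. The bookkeeping of the $\delta$-powers across the boundary, drift and martingale terms, together with uniform-in-$\delta$ moment control of the fast processes, is the delicate part.

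It then remains to assemble. Combining the three pieces, $\abs{\theta_t - \vartheta_t}$ is bounded by the integrated finite-difference remainder ($O(\sigma^\beta)$), the averaging fluctuation ($O(\delta)$), and $L \int_0^t \abs{\theta_s - \vartheta_s} \, \d s$. Taking suprema over $[0,t]$, $p$-th moments, Burkholder--Davis--Gundy on the martingale terms, and Grönwall's inequality closes the estimate with a constant $C = C(p, T, J)$. Two remarks secure the uniformity over $(\delta, \sigma)$: first, the supremum moments $\expect \sup_{0 \leq t \leq T} \abs{\Xi_t}^q$ enter only multiplied by extra powers of $\delta$ coming from $\chi = O(\delta^2)$, so their mild (logarithmic) growth as $\delta \to 0$ is harmless; second, since $\torus^d$ is compact, $\abs{\theta_t - \vartheta_t}$ is bounded a priori, so the estimate is trivial unless both $\delta$ and $\sigma$ are small, and it therefore suffices to treat that regime, yielding a constant $C$ independent of $(\delta, \sigma)$.
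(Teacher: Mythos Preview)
Your approach is essentially the paper's: the same three-way split of the drift error, the same Poisson/corrector identity $\mathcal L_\Xi\bigl(\tfrac{1}{2}C(\Xi)\bigr) = \delta^{-2}\bigl(I_d - C(\Xi)\bigr)$ exploited via It\^o's formula, the same BDG bound on the resulting fast martingale (this is what fixes the rate at $\delta^p$ rather than $\delta^{2p}$), and the same Gr\"onwall closure. Your remark that the supremum moments of $\Xi$ appear only against spare powers of $\delta$, so their logarithmic blow-up is harmless, is exactly the role of the paper's Lemma on $\expect\sup_t|\xi_t|^p$. The $C^3$ upgrade via a second corrector for the cubic, mean-zero $O(\sigma)$ term is also the paper's mechanism.

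There is, however, one genuine regularity gap in the $C^2$ case when $\nu=1$. Applying It\^o's formula to $s\mapsto\chi(\Xi_s)\grad\Phi(\theta_s)$ produces, from the quadratic variation of $\theta$, the term
\[
\nu\int_0^t \chi(\Xi_s)\,\laplacian_\theta\bigl(\grad\Phi\bigr)(\theta_s)\,\d s,
\]
which requires \emph{third} derivatives of $\Phi$; under $G\in C^2$ you only have $\Phi\in C^2$, so this step is not justified as written. The paper fixes this by replacing $\grad\Phi$ in the corrector by $\grad\Phi^\varepsilon$, a mollification of $\Phi$, and tracking two extra errors: the mismatch $(I_d-C(\Xi))(\grad\Phi-\grad\Phi^\varepsilon)=O(\varepsilon)$, and the blown-up third derivatives $\lVert D^3\Phi^\varepsilon\rVert_\infty=O(\varepsilon^{-1})$, which multiplies the $\delta^2$ prefactor on the Laplacian term to give $O(\delta^2\varepsilon^{-1})$. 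Choosing $\varepsilon=\delta$ balances both contributions at $O(\delta)$. Your sketch goes through verbatim when $\nu=0$ (no $\theta$-diffusion, hence no Laplacian term) or when $G\in C^3$; for the full statement you need to insert this mollification step.
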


\begin{remark}
    It should in principle be possible, but may prove technically challenging,
    to extend our findings to $\real^d$, Gaussian priors and unbounded vector fields by using results from~\cite{pardoux2001poisson}.
    Furthermore, although the convergence result is over a finite time interval,
    it is to be expected that convergence of invariant measures might also be established,
    using (for example) the ideas in \cite{MattinglyStuartTretyakov2010} or \cite{bally1996law,bally1996lawb}.
    In this regard we notice that we have focused on pathwise convergence on finite time intervals;
    typically only weak convergence results would be required to obtain convergence of the invariant measure.
    Indeed weak convergence results will be needed to study the formulation from~\cref{rem:etic} because the noise is multiplicative.
\end{remark}

\begin{remark}
    Since knowing the convergence rates with respect to $\delta$ and $\sigma$ is helpful for the parametrization of the method in practice,
    we opted to explicitly consider both cases.
    The critical change from two to three derivatives occurs because
    three or more derivatives are required to exploit the mean zero property of third moments of $\Xi$.
    One might wonder whether an even higher regularity of $G$ could lead to better convergence rates in the limit $\sigma \to 0$.
    An inspection of the proof of~\cref{thm:convergence_continuous_equations} reveals this is not the case.
\end{remark}
In order to balance the two error terms on the right-hand side of~\eqref{eq:convergence_continuous},
one may choose $\sigma \propto \delta$ when $G \in C^{2}(\torus^d, \real^K)$,
or just $\sigma \propto \sqrt{\delta}$ when $G \in C^{3}(\torus^d, \real^K)$.
Since a larger value of $\sigma$ seems to favor exploration of the state space,
as suggested by the numerical experiments in \cref{sub:low_dimensional_parameter_space},
choosing $\sigma \propto \sqrt{\delta}$ might indeed be advantageous for convergence when $G \in C^{3}(\torus^d, \real^K)$.

Next, we present the counterpart of \cref{thm:convergence_continuous_equations} for the numerical discretizations~\eqref{eq:multiscale:evolution_theta_discrete}.
We note that a weaker metric is employed in this result than in~\eqref{eq:convergence_continuous}.
\begin{theorem}
    \label{thm:convergence_discrete_equations_full}
    Assume that $\{\approxtheta{n}, \approxxi{n}{1}, \dotsc, \approxxi{n}{J}\}_{n=0}^{N}$ is a solution to~\eqref{eq:multiscale:evolution_theta_discrete} with $D(\dummy) = I_d$
    and an initial condition $(\theta_0, \xi^{(1)}_0, \dotsc, \xi^{(J)}_0)$ satisfying~\eqref{eq:initial_condition_xi}.
    If $G \in C^2(\torus^d, \real^K)$,
    then for any $J > 0$ and any~$T > 0$,
    there exists a constant $C = C(T, J)$ such that
    \begin{equation}
        \label{eq:general_discrete_bound}
        \forall  (\delta, \sigma, \Delta) \in \real_+^3, \qquad
        \sup_{0 \leq n \leq T/\Delta} \expect \abs{\approxtheta{n} - \vartheta_{n \Delta}}^2
        \leq C \left( \Delta + \sigma^{2\beta} + \log(1 + \delta^{-1}) \, \delta^2 \right).
    \end{equation}
    The exponent $\beta$ is defined as in~\eqref{eq:definition_beta}.
\end{theorem}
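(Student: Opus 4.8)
The plan is to prove the bound modularly, reusing \cref{thm:convergence_continuous_equations} for the averaging error and analysing only the discretization error from scratch. I would realise the discrete scheme \eqref{eq:multiscale:evolution_theta_discrete} and a continuous solution $\{\theta_t, \xi^{(1)}_t, \dotsc, \xi^{(J)}_t\}$ of the alternative dynamics \eqref{eq:alternative_derivative-free} on a common probability space: both slow equations are driven by the same Brownian motion $w$, and the Gaussian variables $x_n^{(j)}$ in \eqref{eq:multiscale:evolution_theta_discrete_xis} are taken to be the suitably normalised stochastic integrals of the $w^{(j)}$ over $[n\Delta, (n+1)\Delta]$. Since \eqref{eq:multiscale:evolution_theta_discrete_xis} is precisely the exact-in-law transition of the Ornstein--Uhlenbeck process \eqref{eq:alternative_theta_xi}, this coupling makes the fast variables coincide at grid points, $\approxxi{n}{j} = \xi^{(j)}_{n\Delta}$, and --- because the hypothesis is $D(\dummy) = I_d$ --- it makes the additive noise increment $\nu\sqrt{2 \Delta}\, x_n$ of \eqref{eq:multiscale:evolution_theta_discrete_theta} equal to the increment $\nu\sqrt 2\,(w_{(n+1)\Delta} - w_{n\Delta})$ seen by \eqref{eq:alternative_theta}. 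A triangle inequality then gives $\expect \abs{\approxtheta{n} - \vartheta_{n\Delta}}^2 \le 2\, \expect \abs{\approxtheta{n} - \theta_{n\Delta}}^2 + 2\, \expect \abs{\theta_{n\Delta} - \vartheta_{n\Delta}}^2$, and the last term is bounded by $C(\delta^2 + \sigma^{2\beta})$ on applying \cref{thm:convergence_continuous_equations} with $p = 2$.

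It then remains to estimate the pure discretization error $e_n := \approxtheta{n} - \theta_{n\Delta}$. Writing the drift of \eqref{eq:alternative_theta} as $b(\theta, \Xi)$ (the prior term being absent on $\torus^d$, where $\Phi_R = \Phi$), the cancellation of the additive noise leaves the noise-free recursion $e_{n+1} = e_n + b(\approxtheta{n}, \approxXi{n})\, \Delta - \int_{n\Delta}^{(n+1)\Delta} b(\theta_s, \Xi_s)\, \d s$. Using $\approxXi{n} = \Xi_{n\Delta}$, I would split the per-step discrepancy into a feedback term $b(\approxtheta{n}, \Xi_{n\Delta}) - b(\theta_{n\Delta}, \Xi_{n\Delta})$, which is Lipschitz in $e_n$ with a random constant having all moments finite (finiteness being immediate since $\theta$ lives on the compact torus and $\Xi$ is Gaussian); a slow-variation term $b(\theta_{n\Delta}, \Xi_{n\Delta}) - b(\theta_s, \Xi_{n\Delta})$; and a fast-variation term $b(\theta_s, \Xi_{n\Delta}) - b(\theta_s, \Xi_s)$. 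The feedback term will feed a discrete Gronwall inequality; the slow-variation term is the classical additive-noise Euler consistency error and contributes no worse than order $\Delta$ to $\expect \abs{e_n}^2$, so it is harmless.

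The fast-variation term is the heart of the matter and the step I expect to be most delicate: it cannot be controlled step by step without either losing powers of $\Delta$ or diverging as $\delta \to 0$, so its contributions must be summed globally. After subtracting the $\Xi$-average $\expect_{\Xi}[b(\theta, \Xi)]$, which depends on $\theta_s$ alone and is absorbed into the slow error, and after freezing the slowly varying parameter $\theta_s$ (whose variation is again handled by the slow-variation and feedback bookkeeping), the accumulated fast error reduces to a quadrature residual $\sum_{k} \int_{k\Delta}^{(k+1)\Delta} \bigl( \tilde g(\Xi_{k\Delta}) - \tilde g(\Xi_s) \bigr)\, \d s$, with $\tilde g$ a functional of the fast state of mean zero under $\mathcal N(0, I_d)^{\otimes J}$. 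I would control this by quantitative ergodicity of the fast dynamics: either directly through the autocorrelation $\e^{-\abs{t}/\delta^2}$, or by solving the Poisson equation for the Ornstein--Uhlenbeck generator, whose $\delta^{-2}$ scaling forces its solution to be $O(\delta^2)$, so that Itô's formula turns $\int_0^T \tilde g(\Xi_s)\, \d s$ into boundary terms of size $O(\delta^2)$ plus a martingale of quadratic variation $O(\delta^2 T)$. Balancing the mean-square fluctuation of the Riemann sum $\sum_k \Delta\, \tilde g(\Xi_{k\Delta})$ --- of order $\Delta$ when $\Delta \gtrsim \delta^2$ and of order $\delta^2$ when $\Delta \lesssim \delta^2$ --- against this continuous fast average, and treating the crossover regime $\Delta \sim \delta^2$ with care, is exactly what yields the combined order $\Delta + \log(1 + \delta^{-1})\, \delta^2$, the logarithm being the residue of the crossover. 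Inserting these estimates as forcing into the discrete Gronwall inequality, whose constant is of the form $C(T, J)\, \e^{C T}$, closes the bound on $\expect \abs{e_n}^2$, and combining with the averaging bound yields \eqref{eq:general_discrete_bound}.
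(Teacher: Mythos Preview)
Your approach is genuinely different from the paper's and worth comparing. The paper does \emph{not} route through the continuous multiscale solution $\theta_{n\Delta}$; instead it compares $\approxtheta{n}$ directly to the Euler--Maruyama iterate $\approxvartheta{n}$ of the \emph{averaged} equation~\eqref{eq:averaged_equation}. The error $e_n = \approxtheta{n} - \approxvartheta{n}$ is then decomposed as $X_n + Y_n + Z_n + W_n$, where the feedback term $Y_n = \grad\Phi_R(\approxvartheta{n}) - \grad\Phi_R(\approxtheta{n})$ has a \emph{deterministic} Lipschitz constant, and the fluctuation term $W_n = (I_d - C(\approxUpsilon{n}))\grad\Phi_R(\approxtheta{n})$ is controlled globally in $\Delta^2\expect\abs{\sum_\ell W_\ell}^2$. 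The technical device is an auxiliary truncated process $\approxupsilon{n}{j}$ with compactly supported (in $n$) autocorrelation, so that $W_\ell$ and $W_k$ decorrelate exactly once $\abs{k-\ell} > M$; choosing $M \sim \log(1+\delta^{-1})\,\delta^2/\Delta$ then produces the $\log(1+\delta^{-1})\,\delta^2$ term transparently. Your route has the appeal of modularity (reusing \cref{thm:convergence_continuous_equations}), but the paper's decomposition avoids two difficulties that your sketch does not resolve.

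The first gap is the random Lipschitz constant in your feedback term. You write that $b(\approxtheta{n},\Xi_{n\Delta}) - b(\theta_{n\Delta},\Xi_{n\Delta})$ is Lipschitz in $e_n$ with a random constant $L(\Xi_{n\Delta})$ of finite moments. But $L(\Xi_{n\Delta})$ and $e_n$ are correlated (both are $\mathcal F_{n\Delta}$-measurable), so the step $\expect[L(\Xi_{n\Delta})\abs{e_n}^2] \le C\,\expect\abs{e_n}^2$ needed to close a discrete Gr\"onwall does not follow. Cauchy--Schwarz forces you to fourth moments of $e_n$, which are not available without a separate argument; a pathwise Gr\"onwall would require exponential moments of $\sum_k \Delta L(\Xi_{k\Delta})$, which is delicate since $L$ is polynomial in a Gaussian. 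The paper sidesteps this entirely because its feedback term involves only $\grad\Phi_R$, whose Lipschitz constant is deterministic on~$\torus^d$.

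The second gap is the origin of the logarithm. Your description of the accumulated fast error as a quadrature residual $\sum_k\int_{k\Delta}^{(k+1)\Delta}[\tilde g(\Xi_{k\Delta}) - \tilde g(\Xi_s)]\,\d s$ requires freezing $\theta_s$, but $\theta_s$ itself depends on the fast process, so ``absorbing its variation into the slow/feedback bookkeeping'' is circular as stated. More importantly, this quadrature residual vanishes as $\Delta \to 0$ with $\delta$ fixed, so it cannot by itself carry the $\log(1+\delta^{-1})\,\delta^2$ term; your heuristic that the crossover $\Delta \sim \delta^2$ produces the logarithm is not substantiated. In the paper the logarithm arises for a concrete reason: to make the truncation error $\e^{-2(M+1)\Delta/\delta^2}$ as small as $\delta^2$, one must take $M\Delta \sim \delta^2\log(1+\delta^{-1})$, and this is exactly the cost of the near-diagonal correlation block in $\expect\abs{\sum_\ell W_\ell}^2$.
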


\begin{remark}
It is straightforward to obtain an error bound for~\eqref{eq:multiscale:evolution_theta_discrete_simplified} from this result.
Indeed, denoting by~$\approxthetatilde{n}$ the solution to~\eqref{eq:multiscale:evolution_theta_discrete_simplified} in order to differentiate it from the solution to~\eqref{eq:multiscale:evolution_theta_discrete},
it holds by the triangle inequality and~\eqref{eq:general_discrete_bound} that, for all $(\delta, \sigma, \Delta) \in \real_+^3$ and all $\varepsilon > 0$,
\begin{align*}
        \sup_{0 \leq n \leq N} \expect \abs{\approxthetatilde{n} - \vartheta_{n \Delta}}^2
        \leq \, &\left( 1 + \frac{1}{\varepsilon} \right) \sup_{0 \leq n \leq N} \expect \abs{\approxthetatilde{n} - \approxtheta{n}}^2  \\
        &+ (1 + \varepsilon) \,  C \left( \Delta + \sigma^{2\beta} + \log(1 + \delta^{-1}) \, \delta^2 \right).
\end{align*}
Here we used that, by Young's inequality, it holds $(a + b)^2 \leq (1 + \varepsilon) a^2 + \left(1 + \frac{1}{\varepsilon}\right)b^2$ for any $\varepsilon > 0$ and any~$a, b \in \real$.
It is clear that, if $\approxxi{0}{j}, \approxxi{1}{j}, \approxxi{2}{j}, \dotsc$ in~\eqref{eq:multiscale:evolution_theta_discrete_simplified} coincide with $\approxxi{0}{j}, x_0^{(j)}, x_1^{(j)}, \dotsc$ in~\eqref{eq:multiscale:evolution_theta_discrete} for all $1 \leq j \leq J$,
then the first term on the right-hand side vanishes in this limit.
Therefore, letting $\delta \to 0$ and then $\varepsilon \to 0$, we deduce
\begin{equation}
    \label{eq:error_bound_simplified_discrete}
    \forall (\sigma, \Delta) \in \real_+ \times \real_+, \qquad
    \sup_{0 \leq n \leq N} \expect \abs{\approxthetatilde{n} - \vartheta_{n \Delta}}^2
    \leq C \left( \Delta + \sigma^{2\beta} \right),
\end{equation}
where $C = C(T, J)$ is the same constant as in~\eqref{eq:general_discrete_bound}.
In \cref{sec:proof_of_main_results}, for clarity of exposition,
we will in fact first prove the convergence estimate~\eqref{eq:error_bound_simplified_discrete} before showing the more general~\cref{thm:convergence_discrete_equations_full}.
\end{remark}

\begin{remark}
    In the limit $\Delta \to 0$, the error bound~\eqref{eq:general_discrete_bound} becomes
    \begin{align*}
        \sup_{0 \leq n \leq T/\Delta} \expect \abs{\approxtheta{n} - \vartheta_{n \Delta}}^2
          &\leq \left( \sigma^{2\beta} +  \log(1 + \delta^{-1}) \, \delta^2 \right),
    \end{align*}
    which is almost as sharp as the bound obtained in \cref{thm:convergence_continuous_equations}.
    The presence of the extra factor $\log(1 + \delta^{-1})$ in front of $\delta^2$ indicates that it may be possible to obtain a sharper bound.
\end{remark}

\subsection{Accelerating Convergence with Preconditioning}%
\label{sub:accelerating_convergence_using_preconditioning}

In many applications, the condition number of the Hessian of $\Phi_R$ at and around the MAP estimator is very large.
In this situation,
the fastest time scale of~\eqref{eq:averaged_equation},
i.e.\ of gradient descent ($\nu = 0$) or overdamped Langevin ($\nu = 1$) dynamics,
is much smaller than its slowest time scale.
This is evident when $\Phi_R$ is quadratic,
in which case the slowest and fastest time scales correspond to the reciprocals of the smallest and largest eigenvalues of $\hessian \Phi_R$, respectively.
As a result of this wide scale separation,
a very small time step, compared to the time scale of convergence,
is required in order to resolve the dynamics precisely using a numerical method.
For explicit numerical methods,
a wide separation of time scales also leads to a stringent constraints on the time step in order to guarantee stability,
leading to often prohibitive computational costs.

Empirically, we observe -- see \cref{sub:toy_example_with_preconditioning} -- that our multiscale method suffers from a similar issue,
which is not surprising given that~\eqref{eq:alternative_derivative-free} converges to~\eqref{eq:averaged_equation} as $(\delta, \sigma) \to (0, 0)$ by \cref{thm:convergence_continuous_equations}.
This is in contrast with the sampling and inversion methods for inverse problems that are based on the ensemble Kalman filter,
essentially because these methods are affine-invariant~\cite{garbuno2020affine}:
they behave similarly across the class of problems that differ only by an affine transformation.
Ensemble Kalman methods can be viewed, at least in the case of a linear forward model,
as coupled gradient descents dynamics or overdamped Langevin diffusions preconditioned by the covariance of the ensemble,
which provides good stability and convergence properties~\cite{2019arXiv190308866G,2019arXiv191007555C}.

To remedy this issue of overly restrictive constraints on the time step (relatively to the slowest time scales of the problem),
preconditioning can be incorporated in our multiscale method.
More precisely, given a symmetric positive definite matrix $K$,
the dynamics~\eqref{eq:multiscale:evolution_theta} (resp. \eqref{eq:alternative_derivative-free}) can be modified as follows,
\begin{subequations}
\label{eq:multiscale:evolution_theta_preconditioned}%
\begin{align}
    \label{eq:multiscale:evolution_theta_preconditioned_theta}%
    &\quad \dot {\theta}
    = - \frac{1}{J \sigma^2} \sum_{j=1}^{J} \eip*{G(\theta^{(j)}) - G(\theta)}{G(\theta) - y}[\Gamma] \, (\theta^{(j)} - \theta)
    -  C_{K}(\Xi) \Sigma^{-1} (\theta - m) + \nu \sqrt{2 D_{K}(\Xi)} \, \dot {w}, \\
     &\theta^{(j)} = \theta + \sigma \,  \sqrt{K} \xi^{(j)}, \qquad
      \qquad \quad \qquad \qquad \qquad \qquad \qquad \qquad j = 1, \dotsc, J, \\
     &\dot {\xi}^{(j)} = - \frac{1}{\delta^2} \, \xi^{(j)} + \sqrt{\frac{2}{\delta^2}} \, \dot {w}^{(j)},
    \qquad \qquad \xi^{(j)}(0) \sim \mathcal N(0, {I_d}),  \qquad
      j = 1, \dotsc, J,
\end{align}
\end{subequations}
where $C_{K}(\Xi) := \sqrt{K} \, C(\Xi) \sqrt{K}$ and $D_{K}(\Xi) = C_K(\Xi)$ (resp. $D_{K}(\Xi) = K$).
Under the linear approximation
\[
    G(\theta^{(j)}) - G(\theta) \approx (\theta^{(j)} - \theta) \cdot \grad G (\theta),
\]
which is accurate for small $\sigma$,
we can rewrite~\eqref{eq:multiscale:evolution_theta_preconditioned_theta} as
\[
    \dot {\theta}
    \approx - C_{K}(\Xi) \grad \Phi_R (\theta) + \nu \sqrt{2 D_{K}(\Xi)} \dot {w},
\]
which suggests that $\{\theta_t\}_{t \geq 0}$ should converge,
in the limit as $\delta \to 0$ and $\sigma \to 0$, to the preconditioned overdamped Langevin dynamics~\eqref{eq:overdamped_Langevin}.
In order to make this more precise,
notice that if the stochastic process $\{\theta_t, \Xi_t\}_{t \geq 0}$ solves~\eqref{eq:multiscale:evolution_theta_preconditioned},
then $\{u_t, \Xi_t\}_{t \geq 0} := \{\sqrt{K^{-1}}\theta_t, \Xi_t\}_{t \geq 0}$
is equal in law to the solution of~\eqref{eq:multiscale:evolution_theta} (resp.~\eqref{eq:alternative_derivative-free}) with the modified forward model
\[
    \tilde G(u) = G(\sqrt{K} \, u),
    \qquad u \in \real^d,
\]
with the modified initial condition $u_0 = \sqrt{K^{-1}} \theta_0$,
and with the modified prior parameters $\tilde {m} = \sqrt{K^{-1}}m$ and~$\tilde {\Sigma} = \sqrt{K^{-1}} \Sigma \sqrt{K^{-1}}$,
i.e.\ with the prior distribution $\mathcal N(\sqrt{K^{-1}}m, \sqrt{K^{-1}} \Sigma \sqrt{K^{-1}})$.
Here we employed the fact that $\sqrt{C_K(\Xi)} w_t = \sqrt{K} \sqrt{C(\Xi)} w_t$ in law.
In view of this connection, \cref{thm:convergence_continuous_equations,thm:convergence_discrete_equations_full} apply \emph{mutatis mutandis} to the dynamics~\eqref{eq:multiscale:evolution_theta_preconditioned} with $D_K(\Xi) = K$.

We motivate in \cref{sub:toy_example_with_preconditioning} that, when the forward model is linear,
a good preconditioning matrix $K$ is given by the covariance of the Bayesian posterior.
In practice, we observed that preconditioning with the posterior covariance works well also for nonlinear forward models,
provided that the posterior distribution is unimodal.
We emphasize that this approach to preconditioning can be applied both with ($\nu = 1$) and without ($\nu = 0$) noise.
In order to approximate the posterior covariance at a reasonable computational cost,
we employ the gradient-free ALDI (gfALDI) approach proposed in~\cite{2019arXiv190308866G},
which enables generating approximate samples from the Bayesian posterior.
The ALDI method is based on the dynamics
\begin{align}
    \notag
     \dot {\theta}^{(\ell)}
     &= - \frac{1}{L} \sum_{k=1}^{L} \eip*{G(\theta^{(k)}) - \bar G}{G(\theta^{(\ell )}) - y}[\Gamma] (\theta^{(k)} - \bar {\theta}) \\
    \label{eq:eki:evolution_of_thetas}%
     &\qquad - C(\Theta) \Sigma^{-1} (\theta^{(\ell )} - m)
     + \frac{d+1}{L}(\theta^{\ell} - \bar \theta)
    + \sqrt{2 C(\Theta)} \, \dot {w}^{(\ell )}, \qquad \ell  = 1, \dotsc, L.
\end{align}
where $\Theta = (\theta^{(1)}, \dotsc, \theta^{(L)})$ and
\begin{align*}
    \bar {\theta} = \frac{1}{L} \sum_{\ell =1}^{L} \theta^{(\ell )},
    \qquad \bar {G} = \frac{1}{L} \sum_{\ell =1}^{L} G(\theta^{(\ell )}).
\end{align*}
The processes $w^{(\ell)}$ are independent Brownian motions in $\real^L$,
$C(\Theta) = \frac{1}{L} \sum_{\ell=1}^{L} (\theta^{(\ell)} - \bar {\theta}) \otimes (\theta^{(\ell)} - \bar {\theta})$
is the covariance matrix associated with the ensemble
and $\sqrt{C(\Theta)}$ is the $\real^{d \times L}$ matrix defined by
\begin{align*}
    \sqrt{C(\Theta)} = \frac{1}{\sqrt{L}}
    \begin{bmatrix}
        (\theta^{(1)} - \bar {\theta}) & \dotsc & (\theta^{(L)} - \bar {\theta})
    \end{bmatrix}.
\end{align*}
It holds that $C(\Theta) = \sqrt{C(\Theta)} \sqrt{C(\Theta)}^\t$,
which motivates the square root notation.
The first argument in the inner product on the right-hand side of \cref{eq:eki:evolution_of_thetas} is related to consensus,
whereas the second argument measures the mismatch with the observed data.
In practice, the initial ensemble members are drawn independently from the Gaussian prior distribution.
When the Bayesian posterior distribution is close to Gaussian,
it is expected that each particle in the ensemble is an approximate sample from the posterior for sufficiently large times.
Therefore, the expectation of an observable $f$ with respect to $\pi_*$ is approximated from ALDI iterates as
\begin{equation}
    \label{eq:averages}
    \expect_{\pi_*} f
    \approx \frac{1}{NL} \sum_{i=N_0+1}^{N_0+N}\sum_{\ell=1}^{L} \delta_{\hat\theta_i^{(\ell)}},
\end{equation}
where the first $N_0$ iterations can be discarded for reducing the bias from initial conditions,
and where $\{\hat\theta_i^{(\ell)}\}_{i \geq 0}$ is a discrete-time approximation of $\theta^{(\ell)}$
for $\ell \in \{1,\dotsc, L\}$.
The main difference between ALDI and EKS is the presence of the second to last term on the right-hand side of~\eqref{eq:eki:evolution_of_thetas}.
It is shown by means of numerical experiments in~\cite{garbuno2020affine} that this corrective drift term,
which was first identified in~\cite{2019arXiv190810890N},
is crucial for accuracy
when the number of particles $L$ is of the same order of magnitude as the dimension of the state space $d$.

Although EKS and ALDI are self-preconditioned,
these methods can suffer from stability issues for small times when the posterior distribution is far from or much more concentrated than the prior;
this stiffness issue is discussed for the ensemble Kalman--Bucy filter in~\cite{Amezcua14}.
In practice, it is often useful to proceed in two steps:
(i) first run $N_0$ iterations with a small time step,
until the ensemble reaches a region of high posterior probability;
(ii) then use a larger step size for the rest of the simulation,
and use only these iterations for the computation of averages with respect to the posterior distribution,
as in~\eqref{eq:averages}.
The matrix $K$ could also be learned from the EKS, UKS or CBS approaches to approximate sampling,
rather than from ALDI.

Before closing this section,
we note that the local preconditioning approach developed in~\cite{LMS16,MR4248267},
based on localized covariance matrices,
could potentially also be useful for improving the performance of our method.
Once a rough approximation of the posterior has been calculated using~\eqref{eq:multiscale:evolution_theta}, for example,
self-preconditioning could be achieved through a localized covariance matrix
constructed from all the previously generated samples that are in the vicinity of the distinguished particle.
It may be worthwhile to explore this idea in future work,
but in this paper we consider only preconditioning through a position-independent matrix $K$, as in~\eqref{eq:multiscale:evolution_theta_preconditioned}.

\section{Numerical Experiments}%
\label{sec:numerical_experiments}
In this section,
we present numerical experiments demonstrating the performance and limitations of  our method.
\Cref{sub:low_dimensional_parameter_space,sub:toy_example_with_preconditioning,sub:bimodal_example} serve as proof of concept:
in \cref{sub:low_dimensional_parameter_space} a toy inverse problem with low-dimensional parameter and data is considered,
in \cref{sub:bimodal_example} an example with bimodal posterior is considered,
and in \cref{sub:toy_example_with_preconditioning} preconditioning is exemplified.
A more challenging simulated example, closer to inverse problems arising in
real applications, is then considered in \cref{sub:high_dimensional_parameter_space}.

\subsection{Low-dimensional Parameter Space}%
\label{sub:low_dimensional_parameter_space}
We first consider the inverse problem with low-dimensional parameter space that
was first presented in~\cite{MR3400030} and later employed as a test problem in~\cite{HV18,2019arXiv190308866G}.
In this problem,
the forward model maps the unknown $(u_1, u_2) \in \real^2$ to the observation $\bigl(p(x_1), p(x_2)\bigr) \in \real^2$,
where $x_1 = 0.25$ and $x_2 = 0.75$ and
where $p(x)$ denotes the solution to the boundary value problem
\begin{equation}
    \label{eq:boundary_value_problem}%
    \derivative*{1}{x} \left( \e^{u_1} \derivative*{1}[p]{x} \right) = 1, \qquad x \in [0, 1],
\end{equation}
with boundary conditions $p(0) = 0$ and $p(1) = u_2$.
This problem admits the following explicit solution~\cite{HV18}:
\[
    p(x) = u_2 x + \e^{-u_1} \left( - \frac{x^2}{2} + \frac{x}{2} \right).
\]
We employ the same parameters as in~\cite{2019arXiv190308866G}:
the prior distribution is $\mathcal N(0, \sigma^2 {I_2})$ with $\sigma = 10$,
and the noise distribution is $\mathcal N(0, \gamma^2 {I_2})$ with $\gamma = 0.1$.
The observed data is taken to be $y = (27.5, 79.7)$.
Since~\eqref{eq:boundary_value_problem} admits an explicit solution,
the forward model can be evaluated very quickly.
As a result,
obtaining a good approximation of the MAP with our multiscale method takes less than a minute on a personal computer.

We first investigate the performance of the algorithm~\eqref{eq:multiscale:evolution_theta_discrete} when $\nu = 0$,
i.e.\ when an approximation of the MAP estimator is sought.
All the numerical results related to this problem were obtained with $J = 8$ auxiliary processes, with a fixed time step $\Delta = 10^{-3}$,
and with $\approxtheta{0} = (1, 103)$ as the initial condition.

The effect of the parameter $\sigma$, which encodes the radius of exploration around $\theta$,
is illustrated in \cref{fig:1d_elliptic_optimization_sigma}.
In the left panel, we present the trajectories of the solution $\approxtheta{n}$ obtained with~\eqref{eq:multiscale:evolution_theta_discrete} for fixed small $\delta = 10^{-7}$ and several values of $\sigma$.
In the right panel, we present the evolution of the error, in the Euclidean norm, along the trajectories.
In contrast with deterministic algorithms, the iterates produced by our method do not converge to a limit,
which is reflected in the fact that the error oscillates indefinitely at a small value as the simulation progresses.
The reason for this is that there does not exist a value of $\theta$ for which the right-hand side of~\eqref{eq:multiscale:evolution_theta_theta} is zero for all $\Xi \in (\real^d)^J$.
We also notice that a larger value of $\sigma$ seems to increase the convergence speed in the initial stage of the simulation
but it leads to a larger error in the later stages, as the iterates get close to the MAP estimator.
\begin{figure}[ht]
    \centering
    \includegraphics[height=0.36\linewidth]{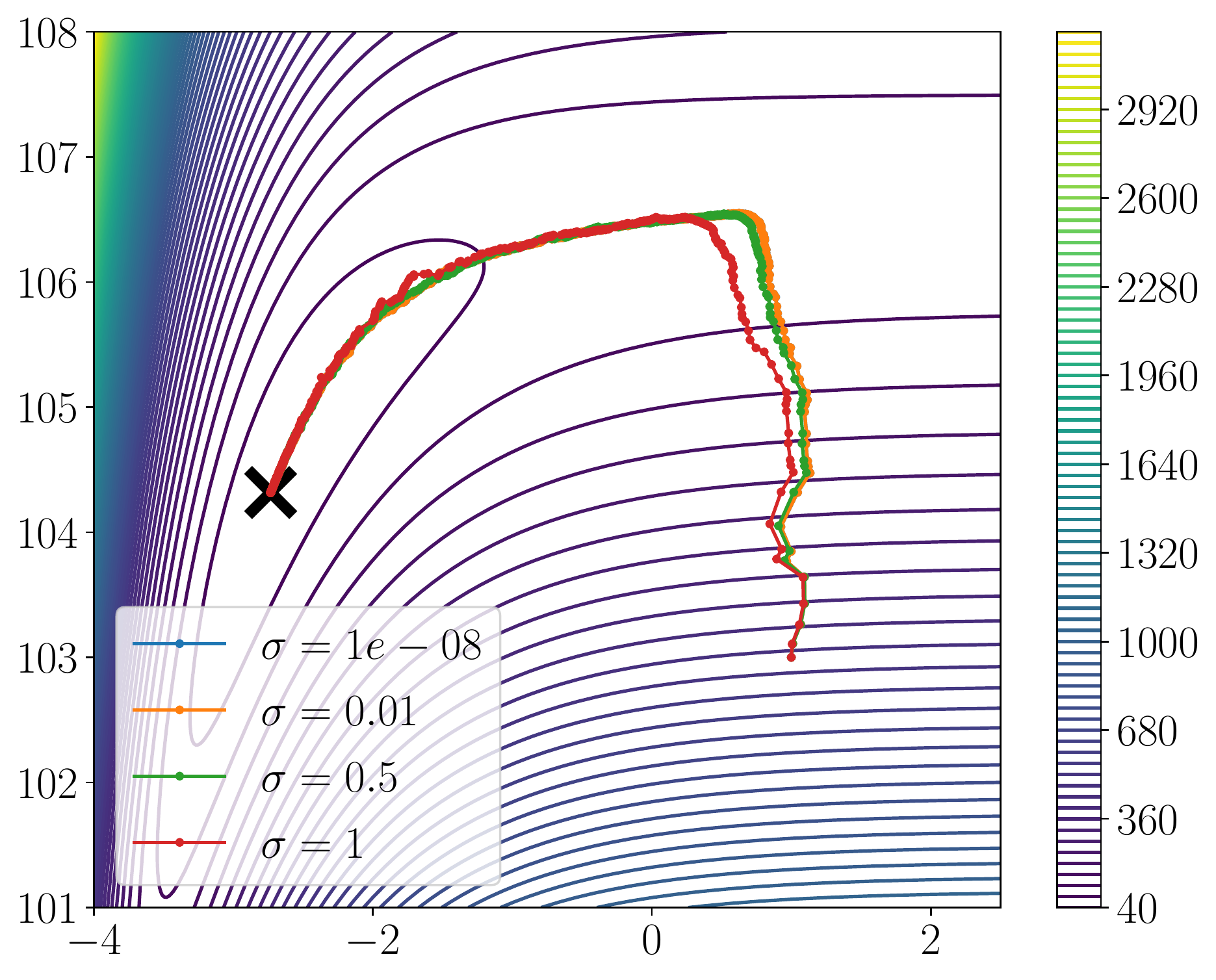}
    \includegraphics[height=0.36\linewidth]{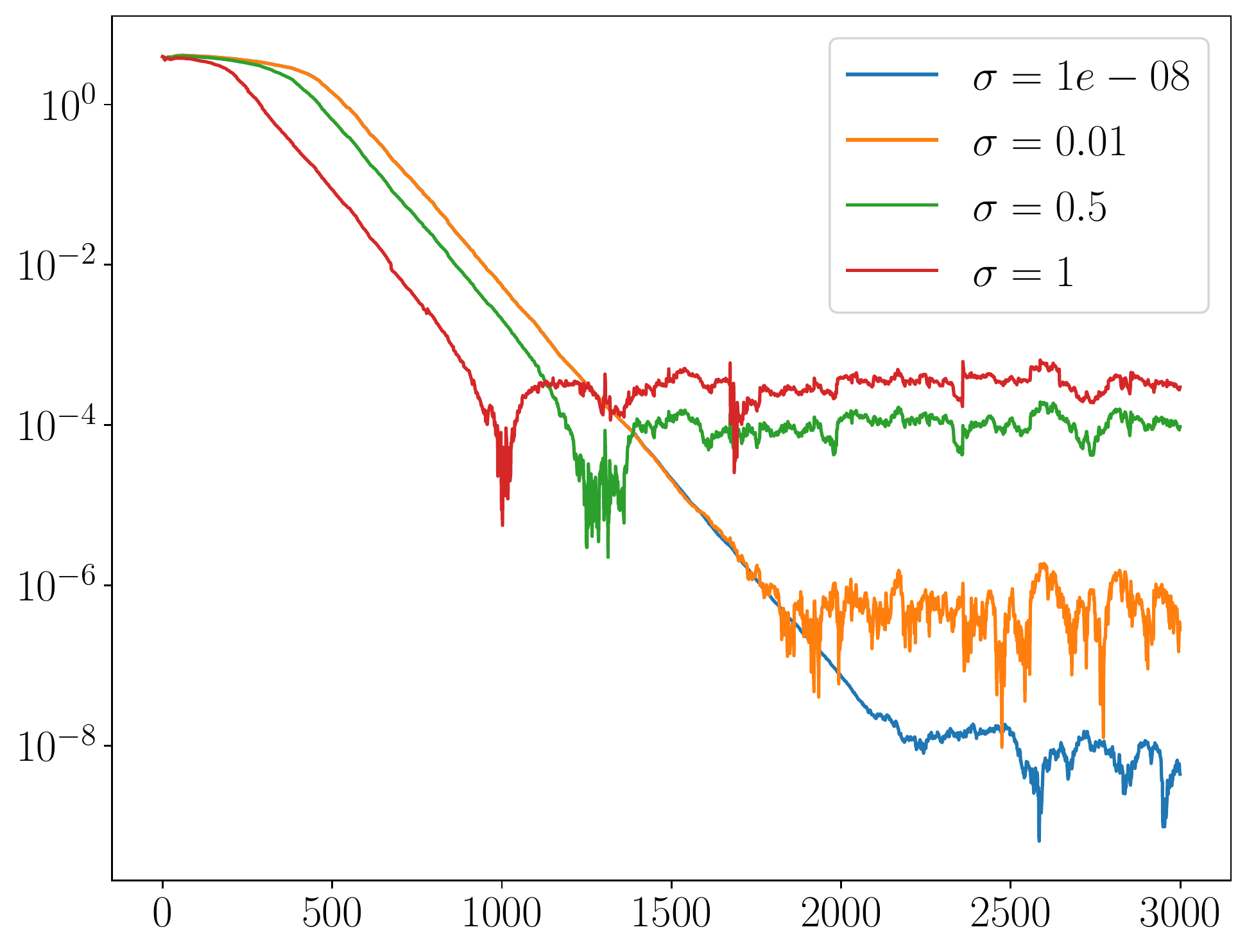}
    \caption{%
        {\bf Left:} Trajectories of the numerical solution $\approxtheta{n}$ for fixed $\delta = 10^{-8}$ and different values of $\sigma$.
        {\bf Right:} Error $\abs*{\approxtheta{n} - \theta_{\rm MAP}}_2$ along the trajectories,
        where $\theta_{\rm MAP}$ is the MAP estimator.
    }%
    \label{fig:1d_elliptic_optimization_sigma}
\end{figure}

The effect of the parameter $\delta$, which influences the correlation between the directions of successive steps,
is illustrated in \cref{fig:1d_elliptic_optimization_delta} for fixed $\sigma = 0.1$.
We observe that the direction of successive steps seems to oscillate more rapidly when $\delta$ is small,
which is consistent with our understanding of the effect of this parameter.
In this particular example, choosing a large $\delta$ does not appear to improve convergence.
\begin{figure}[ht]
    \centering
    \includegraphics[height=0.36\linewidth]{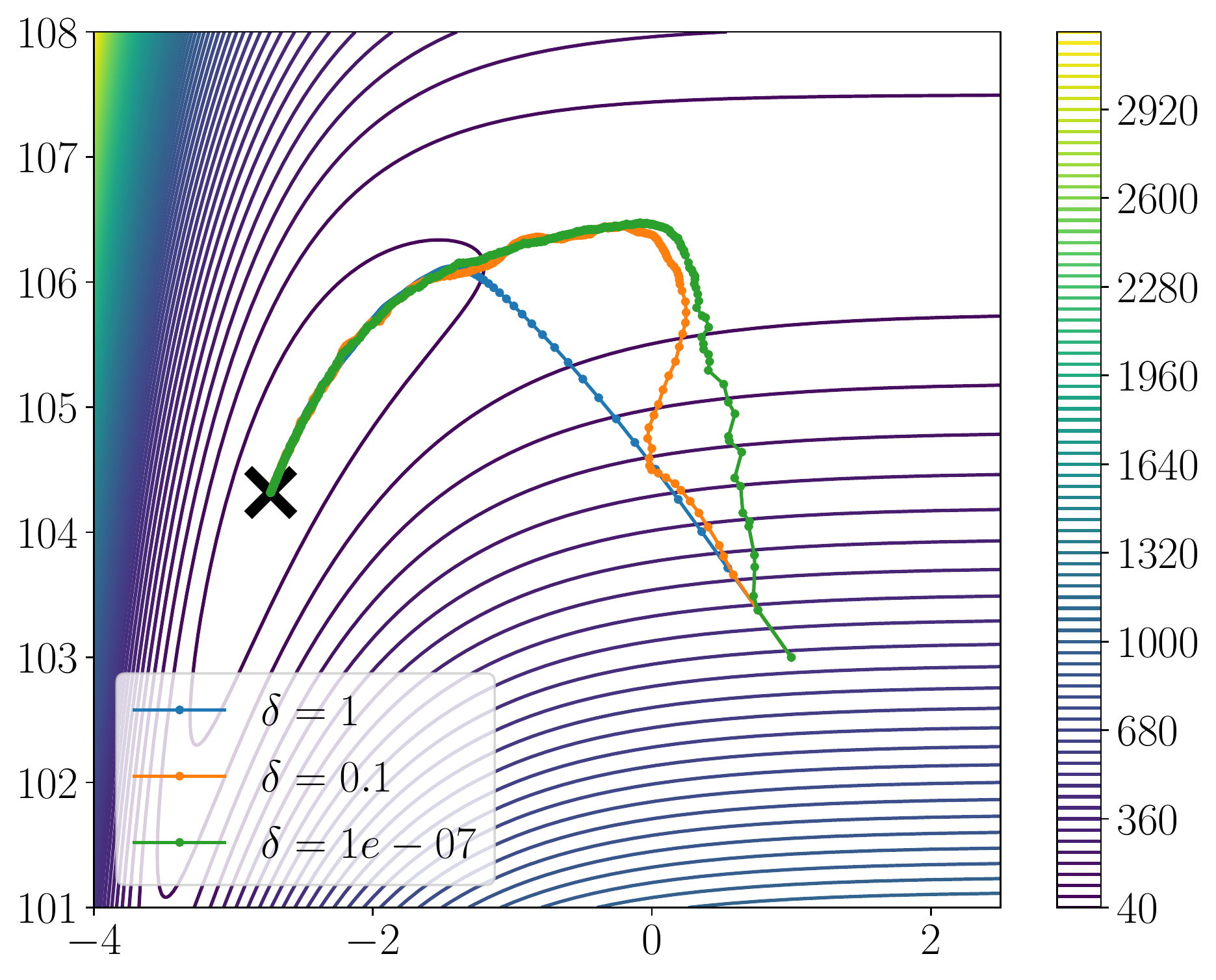}
    \includegraphics[height=0.36\linewidth]{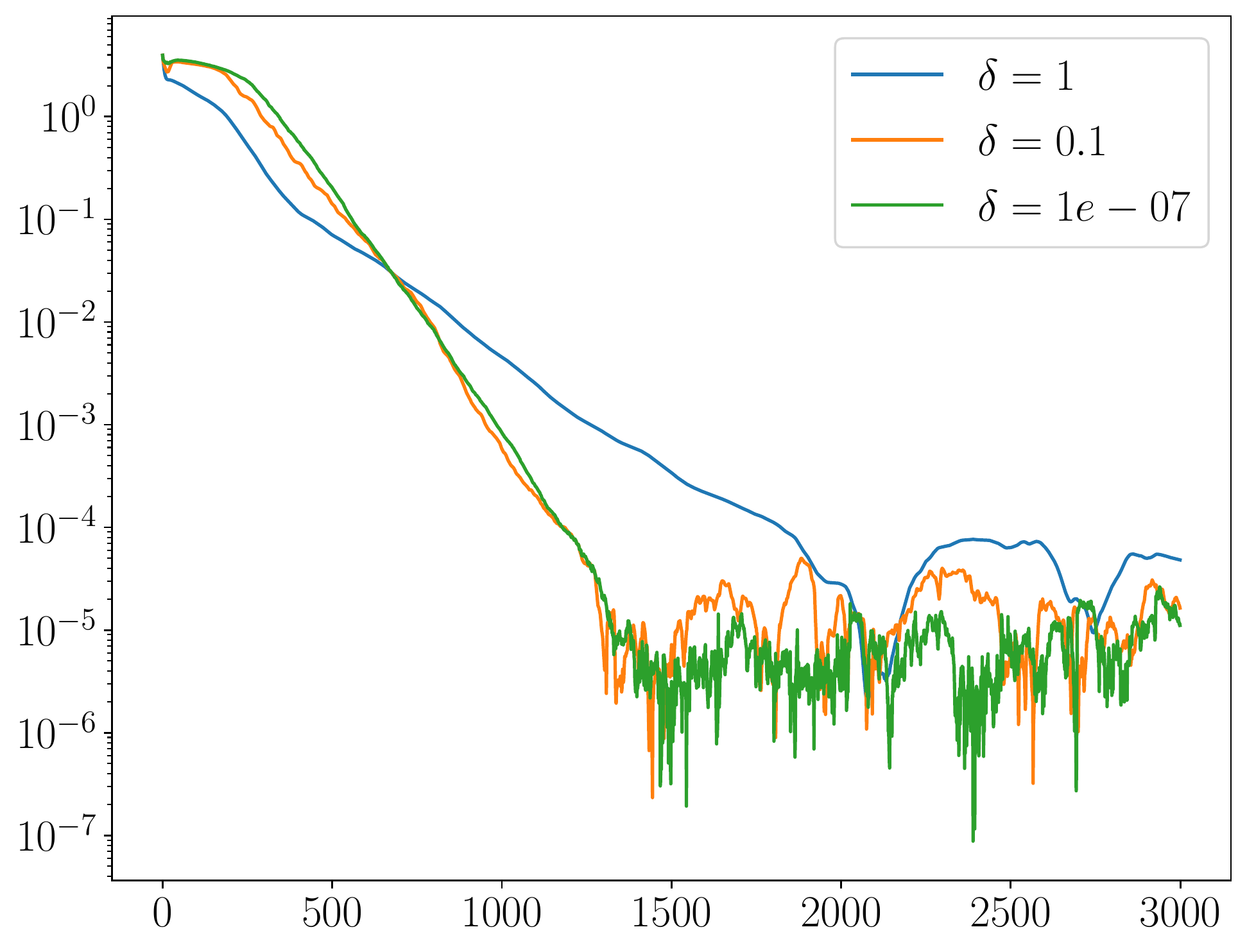}
    \caption{%
        {\bf Left:} Trajectories of the numerical solution $\approxtheta{n}$ for fixed $\sigma = 0.1$ and different values of $\delta$.
        {\bf Right:} Error $\abs*{\approxtheta{n} - \theta_{\rm MAP}}_2$ along the trajectories,
        where $\theta_{\rm MAP}$ is the MAP estimator.
    }%
    \label{fig:1d_elliptic_optimization_delta}
\end{figure}

Let us now investigate the efficiency of~\eqref{eq:multiscale:evolution_theta_discrete} for sampling from the posterior distribution.
For this simulation, we used the parameters $\delta = 10^{-4}$ and $\sigma = 0.01$.
We ran the simulation for 20,000 iterations and, discarding the first 1,000 iterates,
we computed an approximation of the posterior by kernel density estimation with the function \verb?gaussian_kde? from the \verb?scipy.stats? module.
The iterates 1,000 to 20,000, the approximation of the Bayesian posterior based on these iterates, and the true posterior
are depicted in the left, middle and right panels of \cref{fig:1d_elliptic_optimization_noise}, respectively.
It appears from the figure that the approximate posterior is close to the true posterior.
Indeed, the mean and covariance of the true and approximate posterior distributions,
given, respectively, by
\[
    m_p =
    \begin{pmatrix}
        -2.714...  \\
        104.346...
    \end{pmatrix}
    \quad
    C_p =
    \begin{pmatrix}
        0.0129... & 0.0288... \\
        0.0288... & 0.0808...
    \end{pmatrix},
   \]
and
\[
    \tilde{m}_p =
    \begin{pmatrix}
        -2.686... \\
        104.411...
    \end{pmatrix}
    \quad
    \tilde{C}_p =
    \begin{pmatrix}
        0.0147... & 0.0308... \\
        0.0308... & 0.0866...
    \end{pmatrix}
\]
are fairly close.
\begin{figure}[ht]
    \centering
    \includegraphics[height=0.36\linewidth]{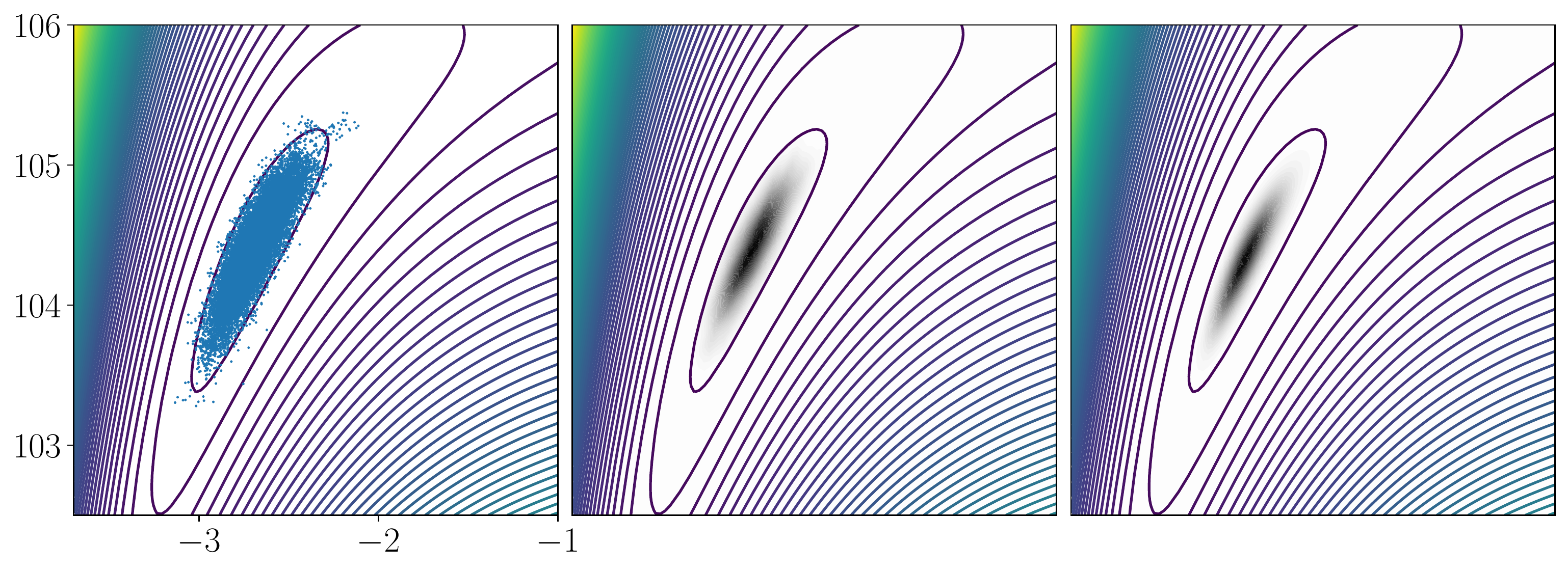}
    \caption{%
    {\bf Left:} Iterates 1,000 to 20,000.
    {\bf Middle:} Approximation of the Bayesian posterior based on these iterates, using kernel density estimation.
    {\bf Right:} True Bayesian posterior.
    }%
    \label{fig:1d_elliptic_optimization_noise}
\end{figure}

\subsection{Two-dimensional bimodal example}%
\label{sub:bimodal_example}
In this section, we consider a bimodal example from~\cite{MR4248267},
associated with forward model
\[
    G: \real^2 \ni \theta \mapsto  \abs{\theta_1 - \theta_2}^2 \in \real,
\]
noise distribution $\eta \sim \mathcal N(0, I_2)$ and prior distribution $\mathcal N(0, I_2)$.
This example is employed in~\cite{MR4248267} for demonstrating the ability of an appropriately localized version of ALDI to sample from multimodal distributions.
We consider two different values for the data: $y=2$ and $y = 4.2297$,
the latter value being the one used in~\cite{MR4248267}.
In both cases, the posterior is bimodal and so ensemble Kalman methods fail to accurately capture the Bayesian posterior,
but the energy barrier between the two modes is much higher when $y = 4.2297$,
which makes this case challenging also for our method.

The approximate posteriors for $y=2$ and $y=4.2297$,
obtained from $10^6$ iterations of our method discretized using~\eqref{eq:multiscale:evolution_theta_discrete} with a time step $\Delta = 10^{-2}$,
parameters $\delta = \sigma = 10^{-5}$, and $J = 8$ auxiliary particles,
are depicted in~\cref{fig:bimodal_example_good,fig:bimodal_example_bad}, respectively.
Whereas gfALDI without localized covariance fails for both values of $y$,
our method gives a very good approximation of the true posterior distribution when $y=2$,
which illustrates the strength of our method compared to ensemble Kalman-based methods without localization.

For $y = 4.2297$, however,
the posterior distribution appears to be well approximated by our method within each of the two high-density regions
but the probabilities of these regions are not accurately captured:
only $33 \%$ of the iterates are such that $\theta_2 - \theta_1 \geq 0$,
whereas this fraction is $50 \%$ under the true posterior probability.
This discrepancy is due to the strong metastability of the dynamics caused by bimodality,
and indeed the line $\theta_2 - \theta_1 = 0$ was crossed by the iterates only~88 times during the simulation.
For comparison, for $y = 2$ the proportion of iterates such that $\theta_2 - \theta_1 \geq 0$ was 50.8\%,
and the line $\theta_2 - \theta_1 = 0$ was crossed 33492 times.
This example shows that strongly multi-modal distributions are challenging for our method,
which is not surprising given the connection with overdamped Langevin dynamics established in~\cref{thm:convergence_continuous_equations}.
\begin{figure}[ht]
    \centering
    \includegraphics[width=0.99\linewidth]{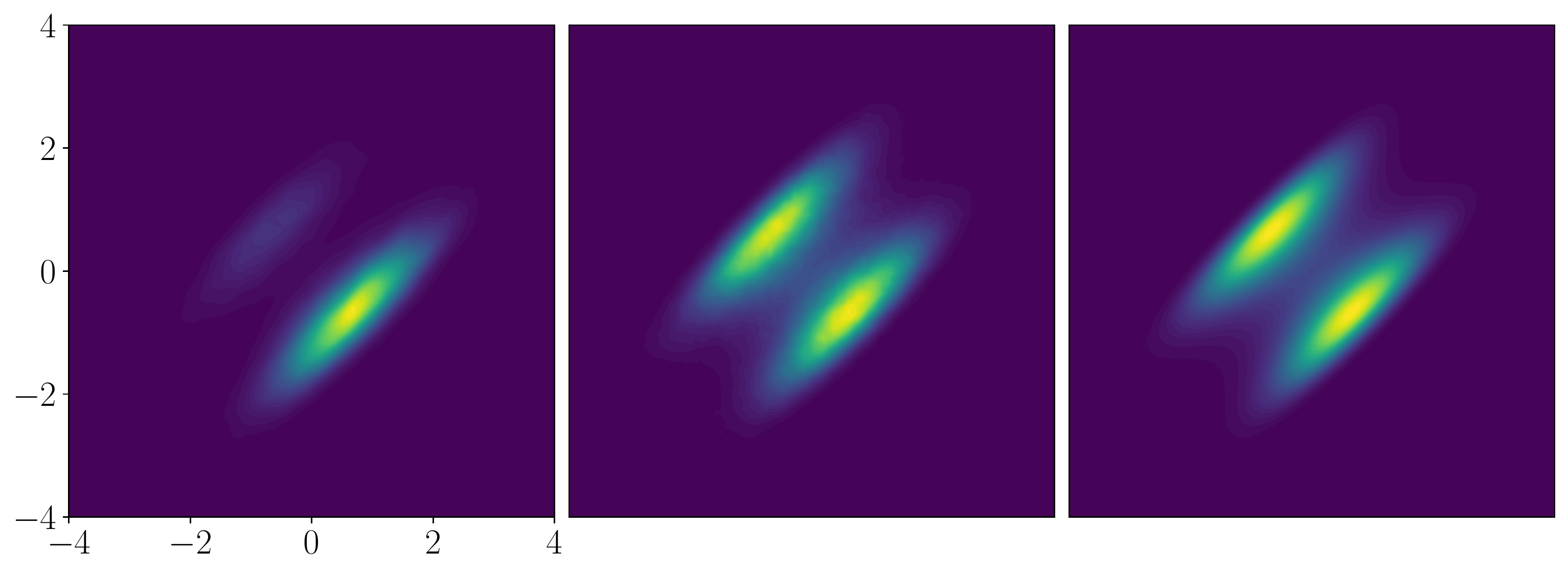}
    \caption{%
        All figures are for $y = 2$.
        {\bf Left:} Approximation of the Bayesian posterior constructed by kernel density estimations from $10^6$ samples generated with gfALDI.
        {\bf Middle:} Approximation of the Bayesian posterior constructed from $10^6$ iterations of the multiscale method.
        {\bf Right:} True Bayesian posterior.
    }%
    \label{fig:bimodal_example_good}
\end{figure}
\begin{figure}[ht]
    \centering
    \includegraphics[width=0.99\linewidth]{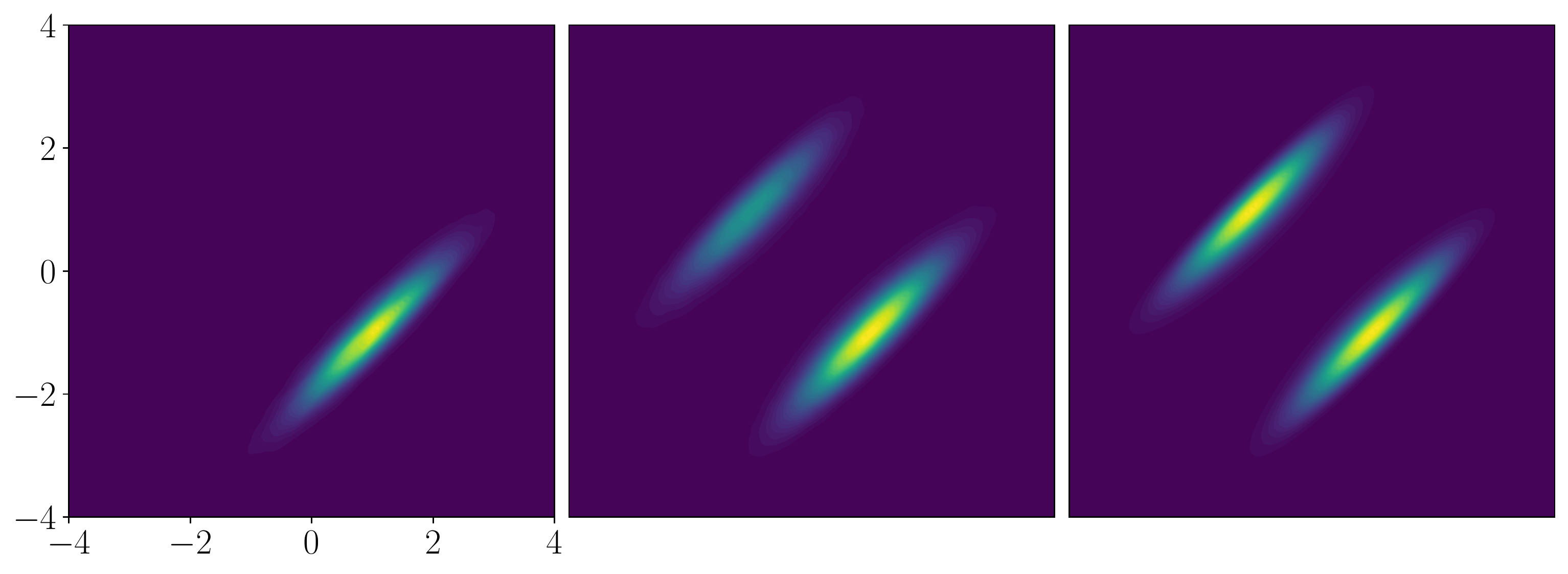}
    \caption{%
        All figures are for $y = 4.2297$.
        {\bf Left:} Approximation of the Bayesian posterior constructed by kernel density estimations from $10^6$ samples generated with gfALDI.
        {\bf Middle:} Approximation of the Bayesian posterior constructed from $10^6$ iterations of the multiscale method.
        {\bf Right:} True Bayesian posterior.
    }%
    \label{fig:bimodal_example_bad}
\end{figure}

\subsection{Toy Example with Preconditioning}%
\label{sub:toy_example_with_preconditioning}

\newcommand{\precond}{\tilde}
We now illustrate the preconditioning methodology proposed in~\cref{sub:accelerating_convergence_using_preconditioning} for a simple inverse problem where the forward model is given by the linear function
$G: \real^3 \ni \theta \mapsto (\theta_1, k \theta_2, k^2 \theta_3)$ with $k = 5$.
We choose the other parameters of the inverse problem as follows: $y = (1, k, k^2)$,
$\Sigma = + \infty {I_3}$ (that is, there is no prior regularization) and $\Gamma = {I_3}$,
so that the MAP estimator is $\theta_{\rm MAP} = (1, 1, 1)$
and the least-squares functional is given by
\begin{align}
    \label{eq:preconditioning_before_precond}
    \Phi(\theta; y) = \frac{1}{2} \left( |\theta_1 - y_1|^2 + k^2|\theta_2 - y_2|^2 + k^4 |\theta_3 - y_3|^2 \right).
\end{align}
The largest eigenvalue of the Hessian of $\Phi(\dummy; y)$ at $\theta_{\rm MAP}$ is equal to $k^4$ so,
if we were to use a gradient descent for~\eqref{eq:preconditioning_before_precond} with the explicit Euler method in order to find the minimizer of $\Phi(\dummy; y)$,
then the constraint on the time step $\Delta$ in order to ensure stability would be that $k^4\Delta < 2$.
Since our method converges to a gradient descent in the limit as $\delta \to 0$ and $\sigma \to 0$,
it is reasonable to expect that a similar constraint should hold to ensure stability of~\eqref{eq:multiscale:evolution_theta_discrete},
and this is indeed what we observed numerically;
in particular, we verified in the case where $\sigma = \delta = 10^{-5}$ that~\eqref{eq:multiscale:evolution_theta_discrete} is stable when $\Delta = 1/k^4$ but unstable when $\Delta = 3/k^4$.

For this problem, the covariance of the Bayesian posterior is given by
\[
    K =
    \begin{pmatrix}
        1 &  0 & 0 \\
        0 &  \frac{1}{k^2} &  0 \\
        0 &  0 &  \frac{1}{k^4}
    \end{pmatrix}.
\]
This is clearly the optimal preconditioner for calculating the MAP estimator.
Indeed, in this case, the limiting equation associated with~\eqref{eq:multiscale:evolution_theta_preconditioned} in the limit as $\max(\delta,\sigma) \to 0$ is
\begin{align}
    \label{eq:preconditioning_after_precond}
    \d \theta_t = - (\theta_t - \theta_{\rm MAP}) \, \d t + \nu \sqrt{2 K} \, \d w_t.
\end{align}
When employed for integrating this equation with $\nu = 0$,
the explicit Euler scheme is stable for time steps satisfying $\Delta < 2$.
In practice, we approximate the preconditioning parameter $K$ from 100,000 iterations of gfALDI,
run with 5 particles initialized independently as $\mathcal N(0, I_3)$ and discretized using the Euler--Maruyama method.
The first 10,000 iterations are run with a small time step $\Delta = 10^{-3}$ to avoid stability issues,
and they are discarded for the computation of the posterior covariance
in order to reduce the bias originating from initial conditions.
The remaining 90,000 iterations are run with a larger time step~$\Delta = 10^{-2}$,
which does not lead to stability issues in the later stages of the simulation
thanks to self-preconditioning by the ensemble covariance.
We employed~$L = 5$ particles because it was shown in~\cite{garbuno2020affine} that
choosing $L > d + 1$ ensures that the continuous-time ALDI dynamics with gradients
(which coincide with gradient-free ALDI dynamics in the case of a linear forward model)
is ergodic with respect to the product measure $\pi_* \times \dotsb \times \pi_*$,
where $\pi_*$ is the Bayesian posterior.
Using this approach,
we obtain a good approximation of the posterior covariance:
\[
    \widehat K =
    \begin{pmatrix}
        1.00...       & 0.00553...  & -0.000302... \\
        0.00553...    & 0.0399...   &  6.48... \times 10^{-5} \\
        -0.000302...  & 6.48... \times 10^{-5}  &  0.00154...
    \end{pmatrix}.
\]

\begin{figure}[b!]
    \centering
    \includegraphics[width=0.65\linewidth]{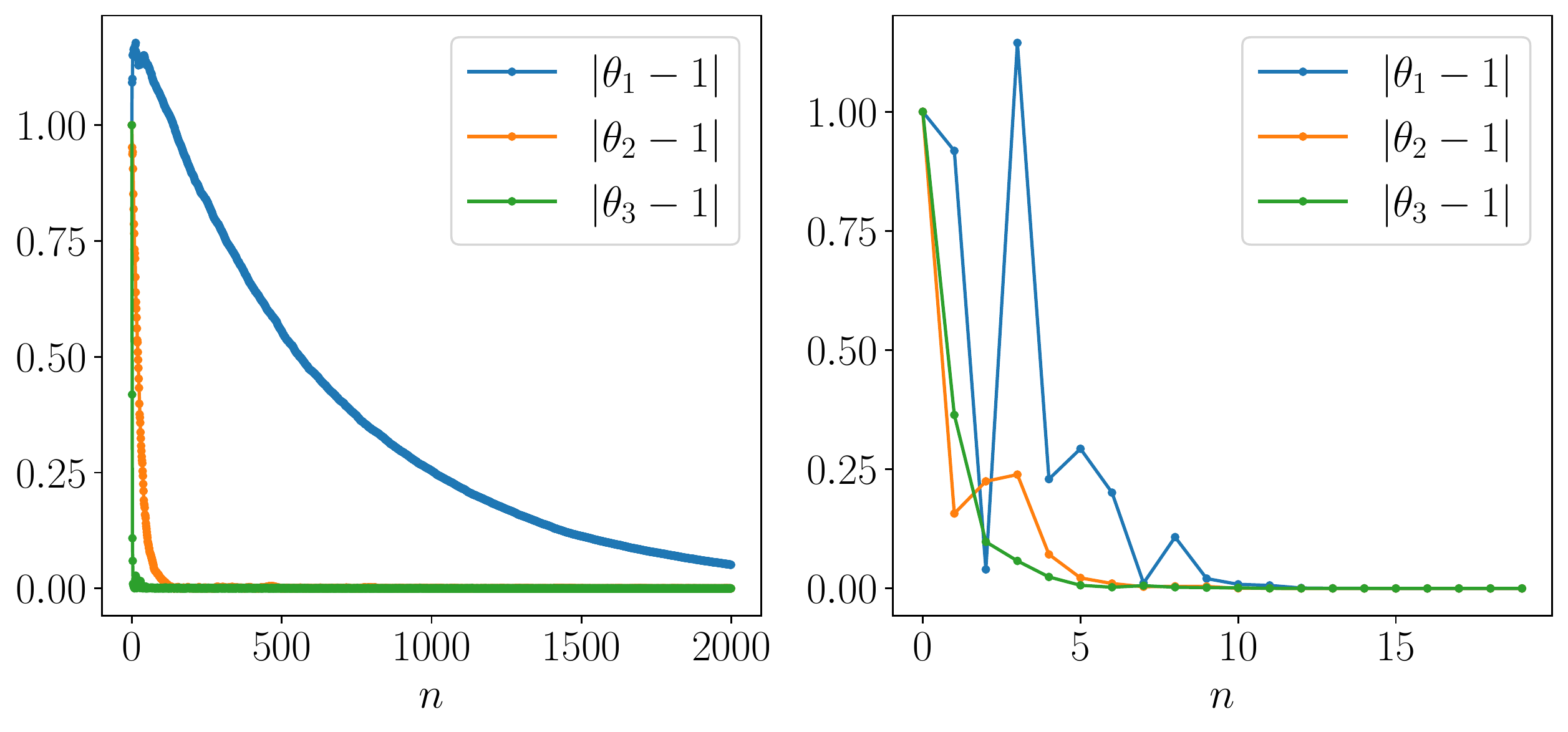}
    \caption{%
        Error between the iterates and the MAP estimator, without (left) and with (right) preconditioning.
        Here the label $n$ of the $x$ axis denotes the iteration index.
        We observe that a better approximation of the MAP estimator is obtained after 20 iterations with preconditioning,
        rather than after 2,000 iterations without preconditioning.
    }%
    \label{fig:preconditioning}
\end{figure}

The effect of using this preconditioner is illustrated in \cref{fig:preconditioning} in the case where~\eqref{eq:multiscale:evolution_theta_discrete} is used in optimization mode,
i.e.\ with $\nu = 0$.
The left and right panels present the evolution of the error with and without preconditioning.
The parameters employed are $\sigma = \delta = 10^{-5}$ and $J = 8$,
and the time step was set to $\Delta = 1$ with preconditioning and $\Delta = 1/k^4$ without preconditioning.
The initial condition taken in both cases was $\theta_0 = (0, 0, 0)^T$.
It appears clearly from the figure that preconditioning significantly accelerates the convergence.

In practice, the mean of the ensemble obtained after application of gfALDI is also useful;
it can be employed as initial condition for~\eqref{eq:multiscale:evolution_theta_preconditioned}.
This is the approach taken in the next section.

\subsection{Higher-dimensional Parameter Space}%
\label{sub:high_dimensional_parameter_space}

We now present an example from~\cite{2019arXiv190308866G} in which the calculation of the forward map
requires the solution to a partial differential equation (PDE) in two dimensions,
and is therefore computationally expensive.
More precisely, we consider the inverse problem of finding the permeability from noisy pressure measurements in a Darcy flow.
This problem falls into the framework developed in~\cite{MR3839555},
and it is natural to model it as an inverse problem with infinite-dimensional parameter space.
In order to be amenable to the numerical methods developed in this paper, however,
the problem needs to be discretized:
this requires defining a finite-dimensional approximation space for the unknown parameter
and specifying a numerical approximation for the calculation of the forward map.
We begin by presenting the inverse problem in its natural infinite-dimensional setting,
and then we give the associated discrete approximation,
which we solve numerically using~\eqref{eq:multiscale:evolution_theta_discrete} together with the preconditioning approach proposed in \cref{sub:accelerating_convergence_using_preconditioning}.

At the infinite-dimensional level,
the abstract inverse problem we consider is that of estimating the logarithm of the permeability profile,
denoted by $a(x)$,
based on noisy measurements of the solution~$p(x)$ to the PDE
\begin{subequations}
\label{eq:pde_darcy}
\begin{align}
    - \grad \cdot \big( \e^{a(x)} \grad p(x) \big) &= f(x), \qquad &x \in D, \\
    p(x) &= 0, &x \in \partial D.
\end{align}
\end{subequations}
Here $D = [0, 1]^2$ is the domain and $f(x) = 50$ represents a source of fluid.
For the prior distribution, we employ a Gaussian measure on $\lp{2}{D}$ with mean zero and precision (inverse covariance) operator given by
\[
    \mathcal C^{-1} = (- \laplacian + \tau^2 \mathcal I)^{\alpha},
\]
equipped with Neumann boundary conditions on the space of mean-zero functions.
The eigenfunctions and eigenvalues associated with the covariance operator are given by
\[
    \psi_{\ell}(x) = \cos \bigl( \pi (\ell_1 x_1 + \ell_2 x_2)\bigr),
    \qquad \lambda_{\ell} = \left(\pi^2 \abs{\ell}^2 + \tau^2\right)^{-\alpha}, \qquad \ell \in \nat^2.
\]
The parameters $\tau$ and $\alpha$ control the characteristic length scale and the smoothness of samples drawn from the prior,
respectively.
For the numerical experiments presented in this section,
we take the same values for these parameters as in~\cite{2019arXiv190308866G}: $\tau = 3$ and $\alpha = 2$.
In this setting,
it can be shown by reasoning as in~\cite[Example 2.19]{MR3839555} that
the log-permeability $a(x)$ is almost surely continuous on the closed set $D$,
so there exists a unique solution $p \in H^1(D)$ to~\eqref{eq:pde_darcy} almost surely.

If $a(x) \sim \mathcal N(0, \mathcal C)$, then $\expect\bigl( \ip{a}{\psi_{\ell}} \, \ip{a}{\psi_{m}}\bigr) = \lambda_{\ell} \, \delta_{\ell, m}$
by definition of the covariance operator and by orthonormality of the eigenfunctions $\{\psi_{\ell}\}_{\ell \in \nat^2}$,
where $\ip{\dummy}{\dummy}$ denotes the inner product in $\lp{2}{D}$.
Since $a(x)$ is almost surely in $\lp{2}{D}$,
we deduce that, almost surely,
\begin{equation}
    \label{eq:karhunen-loeve}
    a  = \sum_{\ell \in \nat^2} \ip{a}{\psi_{\ell}} \psi_{\ell}
    =: \sum_{\ell \in \nat^2} \sqrt{\lambda_{\ell}} \, \theta_{\ell} \, \psi_{\ell},
\end{equation}
where the factors $\{\theta_{\ell}\}_{\ell \in \nat^2}$ are independent $\mathcal N(0, 1)$ random variables.
This is the Karhunen--Loève (KL) expansion,
which can be used as a starting point for the definition of probability distributions in infinite dimensions;
see e.g.~\cite{MR3839555} for more details.

In theoretical works on Bayesian inverse problems of the
type considered in this section, the data are usually
modeled as the values taken by a finite number of continuous
linear functionals $\ell_1, \dotsc, \ell_K$ over $H^1_0(D)$,
when evaluated at the solution $p$ to~\eqref{eq:pde_darcy},
perturbed by additive Gaussian noise.
That is, the forward model maps the unknown permeability $a(\dummy)$ to
\[
    \Big( \ell_1(p), \dotsc, \ell_K(p) \Big) \in \real^K.
\]
In practice, however, we consider that the data consist of pointwise measurements of the solution to~\eqref{eq:pde_darcy}, up to noise.
We assume that these are taken at a finite number equidistant points given by
\begin{equation}
    \label{eq:data_points}
    x_{ij} = \left( \frac{i}{M}, \frac{j}{M} \right), \qquad 1 \leq i, j \leq M - 1.
\end{equation}
Since pointwise evaluation is not a continuous functional on $H^1_0(D)$,
our example deviates here from the framework in~\cite{MR3839555}.
As mentioned in~\cite{2019arXiv190308866G},
pointwise evaluation could in principle be approximated by integration against a narrow mollifier,
which would ensure continuity of the functionals,
but we do not discuss this here.
We take the distance between measurement points equal to $1/10$, i.e.\ $M = 10$,
and we work with the noise distribution $\mathcal N(0, \gamma^2 {I_K})$, with $\gamma = 0.01$ and $K = (M-1)^2$.

In order to approximate the solution to the inverse problem numerically,
we truncate the KL series~\eqref{eq:karhunen-loeve} after a finite number of terms and take this truncated series as the object of inference.
More precisely, we take as unknown the vector of parameters $\theta = \{\theta_{\ell}: \abs{\ell}_{\infty} \leq N \} \in \real^{(N+1)^2}$,
and as prior distribution the Gaussian $\mathcal N(0, {I_d})$, with $d = (N+1)^2$.
For each $\theta \in \real^{(N+1)^2}$, a log-permeability field is constructed by summation as $a(\dummy; \theta) := \sum_{\abs{\ell}_{\infty} \leq N} \sqrt{\lambda_{\ell}} \, \theta_{\ell} \, \psi_{\ell}$,
and the corresponding solution to~\eqref{eq:pde_darcy} is approximated with a finite element method (FEM).
This defines an inverse problem with finite-dimensional parameter space,
which is amenable to~\eqref{eq:multiscale:evolution_theta_discrete} or~\eqref{eq:multiscale:evolution_theta_discrete_simplified}.
In practice, we use $N = 7$,
leading to a state space of dimension 64,
and a FEM using quadratic elements over a regular mesh with 20 elements per direction,
implemented with Gridap~\cite{Badia2020}.
Below, we refer to this inverse problem as the \emph{finite-dimensional inverse problem},
in order to distinguish it from the inverse problem with infinite-dimensional parameter space it aims to discretize.

In order to generate data for the finite-dimensional inverse problem,
we employ the same FEM as is employed in the numerical algorithm for the evaluation of the forward model,
with a true permeability also given by a truncated KL expansion using as many KL terms as in the numerical inference,
i.e.\
\begin{equation}
    \label{eq:non_extended_permeability}
    a^{\dagger}(x) = \sum_{\abs{\ell}_{\infty} \leq N} \sqrt{\lambda_{\ell}} \theta_{\ell}^{\dagger} \psi_{\ell}, \qquad \theta^{\dagger}_{\ell} \sim \mathcal N(0, 1).
\end{equation}
Clearly, this does not provide a sample from $\mathcal N(0, \mathcal C)$,
but it does provide a sample consistent with the prior distribution assumed in the finite-dimensional inverse problem.
The logarithm of the true permeability field, as well as its MAP approximation by~\eqref{eq:multiscale:evolution_theta_discrete} with $\nu = 0$,
is illustrated in \cref{fig:log_of_permeablitiy}.
The preconditioning matrix~$K$ is calculated using the methodology outlined in~\cref{sub:accelerating_convergence_using_preconditioning},
by running 200 iterations of gfALDI with an ensemble size equal to 512;
the first 100 iterations are employed for transitioning from the prior to a rough approximation of the posterior,
using the adaptive time-stepping scheme of~\cite{MR3998631},
and then gfALDI is run with fixed time step for an additional 100 iterations,
and only these iterations are used for the approximation of the posterior distribution.

The MAP estimator calculated from 300 iterations of the multiscale method~\eqref{eq:multiscale:evolution_theta_discrete},
with a fixed time step equal to $\Delta = 0.02$, parameters $\delta = \sigma = 10^{-5}$,
and $J = 8$ auxiliary processes,
is illustrated in~\cref{fig:log_of_permeablitiy}.
It appears from the figure that the MAP estimator obtained is close to the truth;
denoting by $\theta^{\rm MAP}$ the MAP estimator,
we indeed calculate that
\begin{equation}
    \label{eq:error_non_extended_b}
    \frac{\norm{a^\dagger - a^{\rm MAP}}[L^2(D)]}{\norm{a^\dagger}[L^2(D)]}
    = \frac{\sqrt{\sum_{\abs{\ell}_{\infty} \leq N} \lambda_\ell \abs{\theta_{\ell}^\dagger - \theta^{\rm MAP}_{\ell}}^2}}
    {\sqrt{\sum_{\abs{\ell}_{\infty} \leq N} \lambda_\ell \abs{\theta_{\ell}^\dagger}^2}}
    = 0.116...
\end{equation}
showing that the relative error is approximately $12 \%$.
\begin{figure}[ht!]
    \centering
    \includegraphics[width=0.8\linewidth]{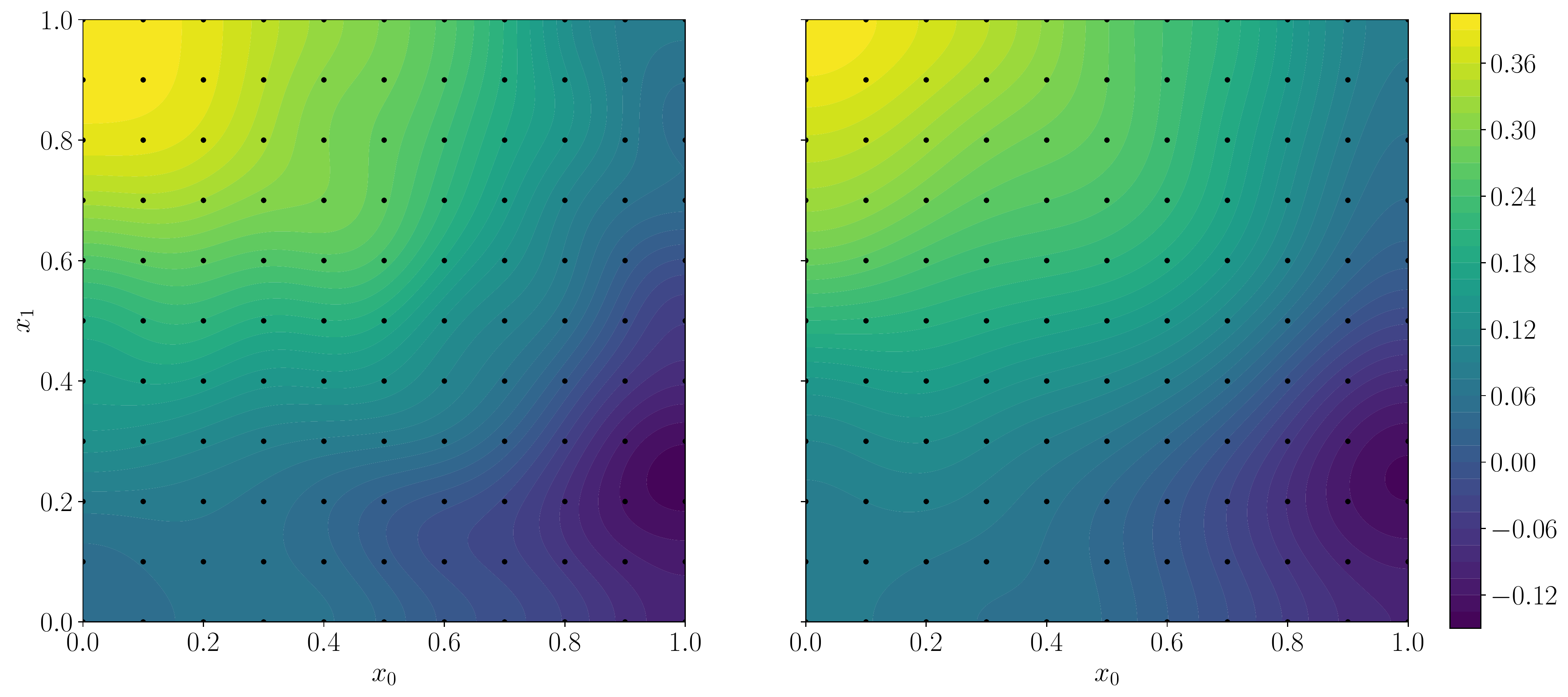}
    \caption{%
        Logarithms of true (left) and approximate (right) permeability profiles.
        The approximate permeability profile was constructed from the approximation of the MAP estimator provided by~\eqref{eq:multiscale:evolution_theta_discrete} with $\nu = 0$.
        The black dots are the observation points.
    }%
    \label{fig:log_of_permeablitiy}
\end{figure}

We now turn our attention to the problem of sampling from the Bayesian posterior.
The marginals of the posterior associated with the first 16 KL coefficients,
obtained by kernel density estimation from 20,000 iterations of the multiscale method~\eqref{eq:multiscale:evolution_theta_discrete}
with $\nu = 1$ and all other parameters unchanged,
are illustrated in~\cref{fig:log_of_permeablitiy_sampling_non_extended}.
In the same figure,
the marginals of the approximate posterior distributions calculated using gfALDI and MCMC are depicted.
The MCMC method employed is a variation on the pCN algorithm described in~\cite{MR3135540}.
Specifically the proposal is based not on the prior Gaussian but on a Gaussian distribution $\mathcal N(m, \alpha K)$;
that is, given $\theta_n$ the proposal for $\theta_{n+1}$ is
\[
    \theta_{n+1}^\star = m + \sqrt{1 - \beta^2} \, (\theta_n - m) + \beta \xi_n,
    \qquad \xi_n \sim \mathcal N(0, \alpha K).
\]
The parameters $m$ and $K$ are set to the mean and covariance of the posterior estimated by gfALDI, respectively,
and a scaling factor $\alpha \geq 1$ is employed to ensure that the posterior distribution is absolutely continuous with respect to $\mathcal N(m, \alpha K)$.
To generate the numerical results in~\cref{fig:log_of_permeablitiy_sampling_non_extended},
we ran 20,000 iterations of this method with $\beta = 0.1$ and $\alpha = 4$.
The agreement between the true parameter and the posterior samples is good overall,
and the agreement between the approximate posteriors is also very good.
\begin{figure}[ht!]
    \centering
    \includegraphics[width=0.8\linewidth]{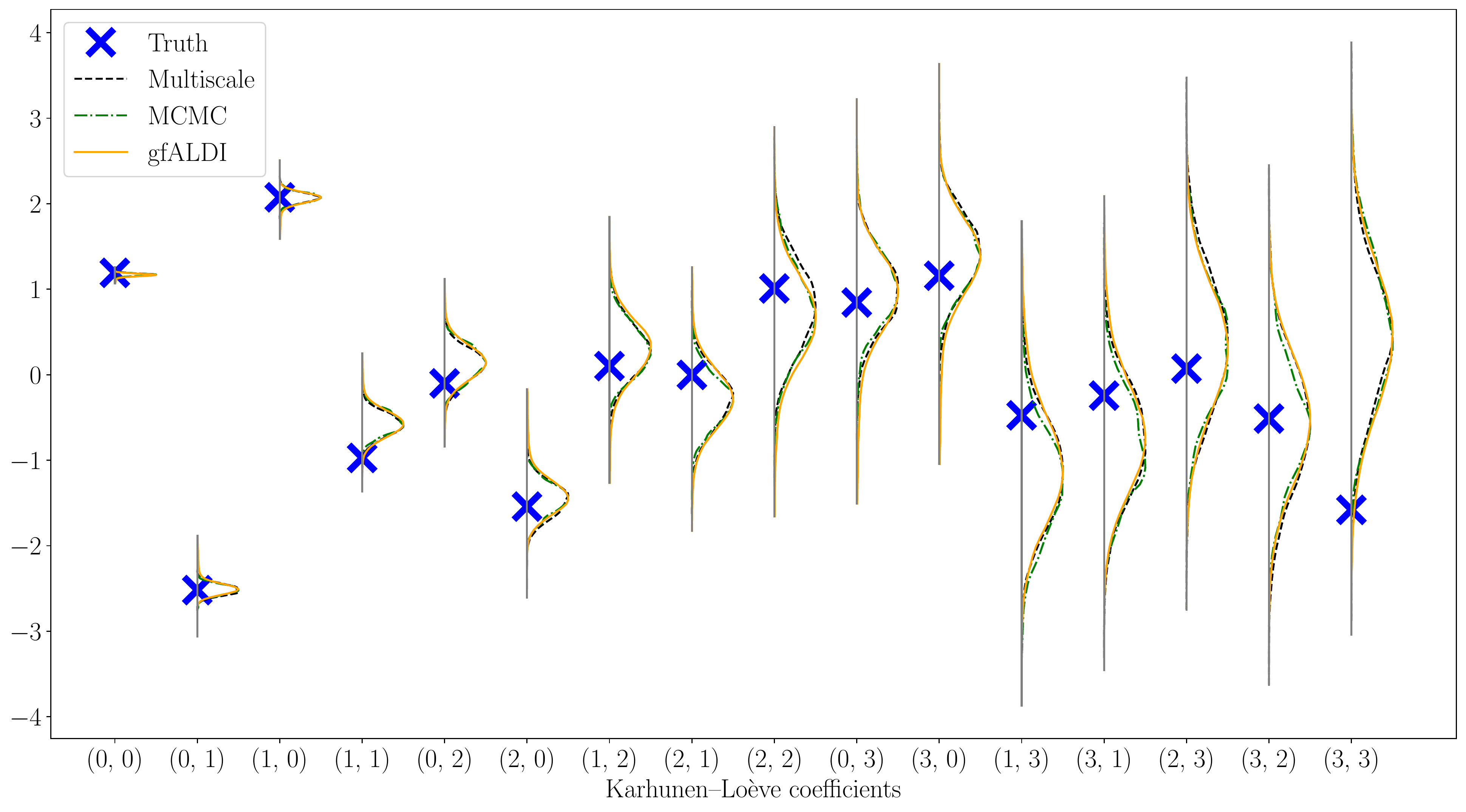}
    \caption{%
        Approximate posterior samples produced by~\eqref{eq:multiscale:evolution_theta_discrete} with $\nu = 1$,
        without model misspecification.
        From these posterior samples,
        the marginal distributions of the KL coefficients were approximated by kernel density estimation using Gaussian kernels;
        they are depicted (non-normalized) in solid lines.
        The crosses are the true values of the coefficients, i.e.\ the values employed to generate the data.
    }%
    \label{fig:log_of_permeablitiy_sampling_non_extended}
\end{figure}

\section{Conclusions and Perspectives for Future Work}%
\label{sec:conculsions}
In this paper,
we introduce a new derivative-free and adjoint-free method for solving Bayesian inverse problems,
specifically for the tasks of sampling from the Bayesian posterior or finding the maximum \emph{a posteriori} estimate.
The method relies on a gradient approximation with a structure similar to that in the ensemble Kalman methods for sampling (EKS, ALDI) and inversion (EKI);
a similar gradient structure was identified within the ``analysis'' step of the EnKF in the paper~\cite{reich2011dynamical}.
In contrast with these algorithms and other approximate sampling methods such as UKS and CBS, however,
the method we propose is provably refineable over a bounded time interval:
using tools from multiscale analysis,
we prove strong pathwise convergence to the gradient descent or overdamped Langevin dynamics,
depending on whether it is employed for inversion or sampling, respectively.
Although we show this result for the particular case where the state space is the $d$-dimensional torus,
we believe that it should be possible,
using results from~\cite{pardoux2001poisson},
to extend our results to the case of an unbounded parameter space.


Since our method is a variation on standard gradient descent,
it suffers from slow convergence when the Bayesian posterior exhibits strong anisotropy or,
relatedly, when the Hessian of the regularized least-squares functional has a large condition number in the part of the domain close to the MAP estimator.
In order to remedy this possible issue,
we propose a preconditioning methodology based on information from ALDI (or EKS, UKS, CBS),
and we demonstrate its efficacy for both inversion and sampling through careful numerical experiments.

Several exciting research avenues remain open for future work.
On the theoretical front, it would be interesting to obtain a uniform-in-time weak error estimate,
both for the continuous-time dynamics and its discrete-time approximation.
This might prove challenging even in the case of a compact parameter space,
because the state space of the auxiliary processes employed for the gradient approximation is unbounded.
Relatedly, it would be useful to obtain a bound,
in terms of the parameters $\sigma$ and $\delta$ and in an appropriate metric,
on the distance between the true Bayesian posterior and that approximated by the method,
i.e.\ the $\theta$-marginal of the invariant measure of~\eqref{eq:multiscale:evolution_theta};
the ideas developed in \cite{bally1996law,bally1996lawb,MattinglyStuartTretyakov2010} might be useful in this regard.
It would also be interesting to study other time discretizations of~\eqref{eq:multiscale:evolution_theta_theta} than the one employed in this paper;
for example, we could consider
discretizations where Bernoulli random variables are employed instead of exact (in law) Brownian increments,
which should not change the weak convergence properties of the method~\cite{MR1214374},
or semi-implicit discretizations (which preserve linearity of the updates) based on the formulation of ensemble Kalman methods in~\cite[Section 4.3.3]{MR3998631};
the diagonally semi-implicit modification of the forward Euler scheme used in~\cite{Amezcua14} in the context of the ensemble Kalman--Bucy filter may also prove useful for developing efficient time-stepping schemes.
Finally, one could study the mean field $J \to \infty$ and averaging $\delta \to 0$ limits for the alternative derivative-free formulation~\eqref{eq:alternative}.

On the practical side,
it will be important to determine how the method can be
coupled to~more efficient or less computationally expensive preconditioners than those computed from ALDI or EKS,
or how these methods, and related methods such as UKS and CBS, can be accelerated.
One may also explore questions related to the parametrization of the multiscale method.
Since a larger value of the parameter $\sigma$ in~\cref{eq:multiscale:evolution_theta} seems to be associated with
faster convergence initially but a larger error later on,
as noted in the description of~\cref{fig:1d_elliptic_optimization_sigma},
it would be interesting to investigate whether a computational gain can be obtained by adapting $\sigma$ during a simulation.
One might, for example, start the dynamics with a relatively large value of~$\sigma$ in order to favor exploration initially,
and then progressively decrease this parameter in order to increase accuracy
once the distinguished particle has reached regions of high posterior probability density.
Finally, it would be interesting to study more precisely and more generally the influence of the parameter~$\delta$,
in order to determine, for example, whether a large value of this parameter can be advantageous for promoting exploration in rugged landscapes.
Our numerical experiments in this paper suggest that choosing $\delta > 0$ may not be advantageous for convergence,
and if this is consistently observed then the simpler discrete time formulation~\eqref{eq:multiscale:evolution_theta_discrete_simplified} corresponding to the case $\delta = 0$ should be preferred over~\eqref{eq:multiscale:evolution_theta_discrete}.

\paragraph{Acknowledgements}%
G.A.P.\ was partially supported by the EPSRC through the grant number EP/P031587/1 and by JPMorgan Chase \& Co under a J.P. Morgan A.I. Research Award 2019.
(Any views or opinions expressed herein are solely those of the authors listed, and may differ from the views and opinions expressed by JPMorgan Chase \& Co. or its affiliates. This material is not a product of the Research Department of J.P. Morgan Securities LLC. This material does not constitute a solicitation or offer in any jurisdiction.)
The work of A.M.S.\ is  supported by NSF (award DMS-1818977) and by the Office of Naval Research (award N00014-17-1-2079).
The work of U.V.\ was partially funded by the Fondation Sciences Mathématiques de Paris (FSMP),
through a postdoctoral fellowship in the ``mathematical interactions'' program.

\newpage
\appendix
\section{Proof of the Main Results}%
\label{sec:proof_of_main_results}

Throughout this section,
we consider that the number of particles $J$ is a fixed parameter.
We often denote the drift in~\eqref{eq:multiscale:evolution_theta_theta} by
\[
    {F^{\sigma}}(\theta, \Xi)
    = - \frac{1}{J \sigma} \sum_{j=1}^{J} \eip*{G(\theta + \sigma \xi^{(j)}) - G(\theta)}{G(\theta) - y}[\Gamma] \, \xi^{(j)}.
\]
We recall that we are working on the multi-dimensional torus $\torus^d$ with a uniform prior, so $\Phi_R = \Phi$.
If the forward model satisfies $G \in C^3(\torus^d, \real^K)$ then, by Taylor's formula,
it holds for all $(\theta, \xi) \in \torus^d \times \real^d$ that
\begin{equation}
\begin{aligned}
    \label{eq:taylor_3}
    &- \frac{1}{\sigma} \eip*{G(\theta + \sigma\xi) - G(\theta)}{G(\theta) - y}[\Gamma] \, \xi \\
    &\qquad\qquad
        = - (\xi \otimes \xi) \grad \Phi_R(\theta)
        - \frac{\sigma}{2} \sum_{k=1}^{K} \sum_{\ell=1}^{K} (\Gamma^{-1})_{k\ell}
        \Bigl( (\xi \otimes \xi) : \hessian G_k(\theta) \Bigr) \Bigl( G_\ell(\theta) - y_\ell \Bigr) \xi \\
    & \qquad \qquad \qquad - \frac{\sigma^2}{6} \sum_{k=1}^{K} \sum_{\ell=1}^{K} (\Gamma^{-1})_{k\ell} \left( (\xi \otimes \xi \otimes \xi) \mathbin{\scalebox{1}[.7]{\vdots}} {\rm D}^3 G_k(\theta + \alpha_k \sigma \xi) \right)
    \Bigl( G_{\ell}(\theta) - y_{\ell} \Bigr) \xi,
\end{aligned}
\end{equation}
for some $\alpha \in [0, 1]^{K}$ depending on $\theta$, $\xi$ and $\sigma$.
Here $\xi \otimes \xi \in \real^{d \times d}$ and $\xi \otimes \xi \otimes \xi \in \real^{d\times d \times d}$ denote
the matrix and third-order tensor with components $(\xi \otimes \xi)_{ij} = \xi_i \xi_j$ and $(\xi \otimes \xi \otimes \xi)_{ijk} = \xi_i \xi_j \xi_k$,
respectively.
We used the notation $\hessian G_k(\theta) \in \real^{d \times d}$ for the Hessian of $G_k$ at $\theta$,
and the notation ${\rm D}^3 G_k(\theta) \in \real^{d \times d \times d}$ for the tensor of third derivatives of $G_k$ at $\theta$,
i.e. the third-order tensor with components $({\rm D}^3 G_k)_{ij\ell}(\theta) = \partial_i \partial_j \partial_\ell G_k(\theta)$.
The symbol $:$ denotes the Frobenius inner product on $\real^{d \times d}$,
and the symbol $\mathbin{\scalebox{1}[.7]{\vdots}}$ denotes the $\real^{d \times d \times d}$ inner product defined by
$S \mathbin{\scalebox{1}[.7]{\vdots}} T = \sum_{i=1}^{d} \sum_{j=1}^{d} \sum_{k=1}^{d} S_{ijk} T_{ijk}$,
for $S, T \in \real^{d \times d \times d}$.

Equation~\eqref{eq:taylor_3} motivates the following notation:
\begin{align*}
    {F_0}(\theta, \Xi) &= - C(\Xi) \grad \Phi_R(\theta), \\
    {F_1}(\theta, \Xi) &= - \frac{1}{2 J} \sum_{j=1}^{J} \sum_{k=1}^{K} \sum_{\ell=1}^{K} (\Gamma^{-1})_{k\ell}
    \Bigl( \bigl(\xi^{(j)} \otimes \xi^{(j)}\bigr) : \hessian G_k(\theta) \Bigr) \Bigl( G_\ell(\theta) - y_\ell \Bigr) \xi^{(j)} \\
    {F_2}(\theta, \Xi) &= \frac{1}{\sigma^2}  \bigl( {F^{\sigma}}(\theta, \Xi) - {F_0}(\theta, \Xi) - \sigma {F_1}(\theta, \Xi) \bigr).
\end{align*}
With this notation, the multiscale system~\eqref{eq:alternative_derivative-free} can be rewritten as
\begin{align*}
    \dot {\theta} &= {F_0}(\theta, \Xi) + \sigma {F_1}(\theta, \Xi) + \sigma^2 {F_2}(\theta, \Xi)+ \nu \sqrt{2} \dot {w}, \\
    \dot {\xi}^{(j)} &= - \frac{1}{\delta^2} \, {\xi}^{(j)} + \sqrt{\frac{2}{\delta^2}} \, \dot {w}^{(j)}, \qquad j = 1, \dotsc, J.
\end{align*}
If $G \in C^2(\torus^d, \real^K)$, then by Taylor's theorem
\begin{align}
    \label{eq:bound_drift_sigma}
    \forall  (\theta, \Xi) \in \torus^d \times (\real^d)^J, \qquad
    \abs{{F^{\sigma}}(\theta, \Xi) - {F_0}(\theta, \Xi)} \leq C \sigma \, \sum_{j=1}^{J} \abs*{\xi^{(j)}}^3,
\end{align}
for a constant $C$ independent of $\sigma$.
Likewise, if $G \in C^3(\torus^d, \real^K)$, then by~\eqref{eq:taylor_3} it holds
\begin{align}
    \label{eq:bound_drift_sigma_squared}
    \forall  (\theta, \Xi) \in \torus^d \times (\real^d)^J, \qquad
    \abs{{F^{\sigma}}(\theta, \Xi) - {F_0}(\theta, \Xi) - \sigma {F_1}(\theta, \Xi)}
    \leq C \sigma^2 \sum_{j=1}^{J} \abs*{\xi^{(j)}}^4,
\end{align}
for a possible different constant $C$, also independent of $\sigma$.
We divide the proof of \cref{thm:convergence_continuous_equations} in two parts.
In the first part, we assume that the forward map satisfies only $G \in  C^{2}(\torus^d, \real^K)$,
and in the second part we obtain a refined estimate for when $G \in  C^{3}(\torus^d, \real^K)$.

\begin{proof}
    [Proof of \cref{thm:convergence_continuous_equations} when $G \in C^{2}(\torus^d, \real^K)$]
    Our approach for this proof is based on~\cite[Chapter 17]{MR2382139}.
    Throughout the proof, $C$ denotes a constant independent of $\delta$ and $\sigma$
    that is allowed to change from occurrence to occurrence.
The generator of the dynamics associated to~\eqref{eq:alternative_derivative-free} is given by
\[
    \mathcal L = \frac{1}{\delta^2} \mathcal L_0 + \mathcal L_1,
\]
where
\begin{align*}
    \mathcal L_0 &= \sum_{j=1}^{J} - \xi^{(j)} \cdot \nabla_{\xi^{(j)}} + \laplacian_{\xi^{(j)}}, \\
    \mathcal L_1 &= - C(\Xi) \, \grad \Phi_R(\theta) \cdot \grad_{\theta} + \bigl({F^{\sigma}}(\theta, \Xi) - {F_0}(\theta, \Xi)\bigr) \cdot \grad_{\theta} + \nu \laplacian_{\theta}.
\end{align*}
Let $\Phi_R^{\varepsilon}$ denote a mollification with parameter~$\varepsilon$ of $\Phi_R$,
as defined in~\cref{lemma:mollifier}.
Since $\Phi \in C^2(\torus^d, \real)$ by the assumption that $G \in C^2(\torus^d, \real^K)$,
it holds by the standard properties of mollifiers that
\begin{equation}
    \label{eq:simple_bound}
    \forall i \in \{1, \dotsc, d\}, \qquad
    \norm{\partial_{\theta_i} \Phi_R^{\varepsilon}}[L^{\infty}(\torus^d)] \leq \norm{\partial_{\theta_i} \Phi_R}[L^{\infty}(\torus^d)],
\end{equation}
and similarly for the second derivatives.
The Poisson equation
\[
    - \mathcal L_0 A(\Xi) =  C(\Xi) - I_{d}  = \frac{1}{J} \sum_{j=1}^{J} (\xi^{(j)} \otimes \xi^{(j)} - {I_d})
\]
admits as unique mean-zero solution
\begin{equation}
    \label{eq:sol_one_poisson}
    A(\Xi) = \frac{1}{2J} \sum_{j=1}^{J} (\xi^{(j)} \otimes \xi^{(j)} - {I_d}).
\end{equation}
Applying It\^o's formula to the function $\psi(\theta, \Xi) =  A(\Xi) \, \grad \textcolor{red}{\Phi_R^{\varepsilon}(\theta)}$,
we obtain
\begin{align*}
    \psi(\theta_t, \Xi_t)  - \psi(\theta_0, \Xi_0)
    &= \frac{1}{\delta^2} \int_{0}^{t} \bigl({I_d} - C(\Xi_s)\bigr) \grad \Phi_R^{\textcolor{red}{\varepsilon}}(\theta_s) \, \d s
    + \int_{0}^{t} \bigl({F^{\sigma}}(\theta_s, \Xi_s) \cdot \grad_{\theta} + \nu \laplacian_{\theta} \bigl)  \psi(\theta_s, \Xi_s) \d s \\
    &\qquad + \sqrt{\frac{2}{\delta^2}} \, \sum_{j=1}^{J}\int_{0}^{t}  (\d w_s^{(j)} \cdot \grad_{\xi^{(j)}}) \psi(\theta_s, \Xi_s)
       + \nu \sqrt{2} \, \int_{0}^{t}  (\d w_s \cdot \grad_{\theta}) \psi(\theta_s, \Xi_s).
\end{align*}
Since the rigorous interpretation of the It\^o SDE~\eqref{eq:alternative_theta} is in integral form,
we have
\[
    \theta_t - \theta_0 = - \int_{0}^{t} C(\Xi_s) \grad \Phi_R(\theta_s) \, \d s + \int_{0}^{t} \bigl({F^{\sigma}}(\theta_s, \Xi_s) - {F_0}(\theta_s, \Xi_s)\bigr) \, \d s + \nu \sqrt{2} w_t,
\]
and so we deduce
\begin{align*}
    \theta_t - \theta_0
    =\, & - \int_{0}^{t} \grad \Phi_R(\theta_s) \, \d s
    + \int_{0}^{t} \bigl(I_d - C(\Xi_s) \bigr) \bigl( \grad \Phi_R(\theta_s) - \grad \Phi_R^{\varepsilon} (\theta_s) \bigr) \, \d s \\
        &+ \int_{0}^{t} \bigl({F^{\sigma}}(\theta_s, \Xi_s) - {F_0}(\theta_s, \Xi_s)\bigr) \, \d s + \nu \sqrt{2} w_t + \delta^2 \bigl( \psi(\theta_t, \Xi_t)  - \psi(\theta_0, \Xi_0)  \bigr) \\
        &- \delta^2 \int_{0}^{t} \bigl({F^{\sigma}}(\theta_s, \Xi_s) \cdot \grad_{\theta} + \nu \laplacian_{\theta} \bigr) \psi(\theta_s, \Xi_s) \, \d s - \delta M_t - \delta^2 N_t
\end{align*}
where $M_t$ and $N_t$ are the martingale terms:
\begin{align*}
    M_t &= \sqrt{2} \, \sum_{j=1}^{J}\int_{0}^{t} \left( \d w_s^{(j)} \cdot \grad_{\xi^{(j)}} \right) \psi(\theta_s, \Xi_s) =: \sum_{j=1}^{J} M_t^{(j)}, \\
    N_t &= \nu \sqrt{2} \,\int_{0}^{t} \left( \d w_s \cdot \grad_{\theta} \right) \psi(\theta_s, \Xi_s).
\end{align*}
Using the fact that $\vartheta$ solves the averaged equation~\eqref{eq:averaged_equation} with the same initial condition and Brownian motion,
we deduce
\begin{align*}
    \label{eq:equation_error}
    \theta_t - \vartheta_t
    =\, & - \int_{0}^{t} \bigl( \grad \Phi_R(\theta_s) - \grad \Phi_R(\vartheta_s) \bigr) \, \d s
    + \int_{0}^{t} \bigl(I_d - C(\Xi_s) \bigr) \bigl( \grad \Phi_R(\theta_s) - \grad \Phi_R^{\varepsilon} (\theta_s) \bigr) \, \d s \\
        &+ \int_{0}^{t} \bigl({F^{\sigma}}(\theta_s, \Xi_s) - {F_0}(\theta_s, \Xi_s)\bigr) \, \d s
        + \delta^2 \bigl( \psi(\theta_t, \Xi_t)  - \psi(\theta_0, \Xi_0)  \bigr) \\
        &- \delta^2 \int_{0}^{t} \bigl({F^{\sigma}}(\theta_s, \Xi_s) \cdot \grad_{\theta} + \nu \laplacian_{\theta} \bigr) \psi(\theta_s, \Xi_s) \, \d s - \delta M_t - \delta^2 N_t.
\end{align*}
Let $e_t = \theta_t - \vartheta_t$.
Using the Lipschitz continuity of $\grad \Phi_R$ in order to bound the first integral on the right-hand side,
the bound~\eqref{eq:bound_mollification} for the second,
the bound~\eqref{eq:bound_drift_sigma} for the third,
the simple inequality~\eqref{eq:simple_bound} for the fourth,
and the bound~\eqref{eq:bound_third_derivatives} for the fifth,
noticing that this bound implies the inequality
\[
    \forall (\theta, \Xi) \in \torus^d \times (\real^d)^J, \qquad
    \max \Bigl\{ \bigl\lvert \grad_{\theta} \psi_i(\theta, \Xi) \bigr\rvert,
    \bigl\lvert \hessian_{\!\!\theta} \psi_i(\theta, \Xi) \bigr\rvert_{\rm F} \Bigr\}
        \leq C \varepsilon^{-1} \biggl( 1 + \sum_{j=1}^{J} \abs*{\xi^{(j)}}^2 \biggr)
\]
for all $i \in \{1, \dotsc, d\}$,
we obtain
\begin{align*}
    \abs{e_t}
    \leq \, &L \int_{0}^{t} \abs{e_s} \d s +
    C \varepsilon \int_{0}^{t} \left(1 + \sum_{j=1}^{J} \abs*{\xi_s^{(j)}}^2 \right) \,\d s
        + C \sigma  \sum_{j=1}^{J} \int_{0}^{t} \abs*{\xi_s^{(j)}}^3 \, \d s
        + C \delta^2  \left( \sum_{j=1}^{J} \abs{\xi_0^{(j)}}^2 + \sum_{j=1}^{J} \abs{\xi_t^{(j)}}^2  \right) \\
            &+ C \delta^2 \varepsilon^{-1} \int_{0}^{t} \left( 1 + \sum_{j=1}^{J} \abs{\xi_s^{(j)}}^5 \right) \, \d s + \delta \abs{M_t} + \delta^2 \abs{N_t}.
\end{align*}
Here $L$ is the Lipschitz constant of $\grad \Phi_R$.
Raising to the power $p$, letting $\varepsilon = \delta$, taking the supremum and taking the expectation, we obtain
\begin{align*}
    &\expect \left( \sup_{0 \leq s \leq t} \abs{e_s}^p \right)
    \leq C T^{p-1} \int_{0}^{t} \expect \abs{e_s}^p \d s +
    C \delta^p T^{p-1} \int_{0}^{t} \biggl(1 + \sum_{j=1}^{J} \expect \abs*{\xi_s^{(j)}}^{2p} \biggr) \,\d s \\
    &\qquad + \sigma^p T^{p-1} \, \sum_{j=1}^{J} \int_{0}^{t} \expect \abs*{\xi_s^{(j)}}^{3p} \, \d s
         + C \delta^{2p}  \biggl( \sum_{j=1}^{J} \expect \abs*{\xi_0^{(j)}}^{2p} + \sum_{j=1}^{J} \expect \left( \sup_{0 \leq s \leq t} \abs*{\xi_s^{(j)}}^{2p} \right)  \biggr) \\
        &\qquad + C \delta^{p} T^{p-1} \int_{0}^{t} \left( 1 + \sum_{j=1}^{J} \expect \abs{\xi_s^{(j)}}^{5p} \right) \, \d s
        + C \delta^p \expect \left( \sup_{0\leq s\leq t}\abs{M_t}^p \right) + C\delta^{2p} \expect \left( \sup_{0\leq s\leq t}\abs{N_t}^p \right) \qquad \forall  t \in [0, T].
\end{align*}
Since $\xi^{(1)}, \dotsc, \xi^{(J)}$ are identically distributed stationary stochastic processes,
their moments are constant in time and they coincide, so we deduce
\begin{align*}
    \expect \left( \sup_{0 \leq s \leq t} \abs{e_s}^p \right)
    \leq& \, C \int_{0}^{t} \expect  \left( \sup_{0 \leq u \leq s} \abs{e_u}^p  \right) \d s +
    C \sigma^p + C \delta^{p} + C \delta^{2p} \expect \left( \sup_{0 \leq s \leq t} \abs*{\xi_s^{(1)}}^{2p} \right)  \\
        & + C \delta^p \expect \left( \sup_{0\leq s\leq t}\abs{M_t}^p \right)+ C\delta^{2p} \expect \left( \sup_{0\leq s\leq t}\abs{N_t}^p \right) \qquad \forall t \in [0, T].
\end{align*}
By \cref{lemma:bound_supremum_ou}, there exists a constant $C$ depending only on $p$ such that
\begin{equation}
    \label{eq:bound_sup}
    \forall \delta > 0,
    \qquad \expect \left(\sup_{0 \leq t \leq T} \abs{\xi_t^{(1)}}^{p} \right) \leq C \left(1 + \log \left(1 + \frac{T}{\delta^2}\right) \right) ^{p/2},
\end{equation}
Therefore, since
\[
    \delta \log \left( 1 + \left(1 + \frac{T}{\delta^2}\right)\right) \xrightarrow[\delta \to 0]{} 0,
\]
it holds
\begin{equation}
    \label{eq:bound_supremum_ou}
    \forall \delta \in (0, 1], \qquad  \delta^p \expect \left(\sup_{0 \leq t \leq T} \abs{\xi_1}^{2p} \right) \leq C.
\end{equation}
Let us now bound the martingale terms.
Using the fact that the summands in the definition of $M_t$ are identically distributed
and using the moment inequality~\cite[Theorem 7.2]{MR2380366}, which is valid for $p \geq 2$,
we obtain
\begin{align*}
    \expect \left( \sup_{0 \leq t \leq T} \abs{M_t}^p \right)
    &\leq \expect \left(J^{p-1}\sum_{j=1}^{J}\sup_{0 \leq t \leq T} \abs{M_t^{(j)}}^p \right)
    = J^p \expect \left(\sup_{0 \leq t \leq T} \abs{M_t^{(1)}}^p \right) \\
    &\leq J^p \left( \frac{p^3}{p-1} \right)^{\frac{p}{2}} T^{\frac{p-2}{p}}
    \int_{0}^{T} \expect \abs{ \grad_{\xi^{(1)}} \psi(\theta_t, \Xi_t) }[\rm F]^p \, \d t,
\end{align*}
where $\abs{\dummy}[\rm F]$ denotes the Frobenius norm.
Since $\grad \Phi_R$ is bounded on $\torus^d$ and the moments of $\xi^{(1)}_t, \dotsc, \xi^{(J)}_t$ are constant in time,
we obtain the bound
\begin{align*}
    \forall p \geq 2, \quad \forall T \geq 0, \qquad
    \expect \left( \sup_{0 \leq t \leq T} \abs{M_t}^p \right)
    &\leq C T^{1 + \frac{p-2}{p}},
\end{align*}
and for $1 < p \leq 2$ we have
\[
    \expect \left( \sup_{0 \leq t \leq T} \abs{M_t}^p \right)
    \leq \expect \left( \sup_{0 \leq t \leq T} \left(1 + \abs{M_t}^2\right) \right) = 1 + \expect \left( \sup_{0 \leq t \leq T} \abs{M_t}^2 \right) \leq 1 + C T.
\]
Similarly, for fixed $T$ it holds
\[
    \forall p > 1, \qquad
    \expect \left( \sup_{0 \leq t \leq T} \abs{N_t}^p \right) \leq C.
\]
Using these bounds together with~\eqref{eq:bound_supremum_ou}, we obtain
\begin{align*}
    \expect \left( \sup_{0 \leq s \leq t} \abs{e_s}^p \right)
    \leq& \, C \int_{0}^{t} \expect  \left( \sup_{0 \leq u \leq s} \abs{e_u}^p  \right) \d s +
    C \sigma^p + C \delta^{p}.
\end{align*}
We then obtain the required bound by Gr\"onwall's inequality, which concludes the proof.
\end{proof}

\begin{proof}
    [Proof of \cref{thm:convergence_continuous_equations} when $G \in C^{3}(\torus^d, \real^K)$]
    The idea of the proof is the same.
    The only difference is that now we consider additionally the Poisson equation
    \[
        - \mathcal L_0 \phi(\Xi; \theta) = - {F_1}(\theta, \Xi).
    \]
    For the sake of simplicity, we consider only the case where $\nu = 0$,
    in which case a regularization in the same spirit as~\eqref{eq:mollification} is not necessary.
    Since the right-hand side is a cubic polynomial in $\xi^{(1)}, \dotsc, \xi^{(J)}$ for fixed $\theta$,
    its average with respect to the invariant measure of $\Xi$ is zero
    and the solution to the equation is itself cubic (with only cubic and linear terms) in the variables $\xi^{(1)}, \dotsc, \xi^{(J)}$.
    Indeed, the eigenfunctions of $\mathcal L_0$ are given by tensor products of Hermite polynomials;
    see, for example,~\cite[Section 4.4]{pavliotis2011applied} and~\cite{AAGPUV_2017}.
    Therefore,
    after applying It\^o's formula to the function~$\varphi(\theta, \Xi) := A(\Xi) \, \grad \Phi_R(\theta) + \phi(\Xi; \theta)$,
    we obtain
    \begin{align*}
        \theta_t - \theta_0
        =\, & - \int_{0}^{t}  \grad \Phi_R(\theta_s) \, \d s + \sqrt{2} w_t + \int_{0}^{t} \bigl({F^{\sigma}}(\theta_s, \Xi_s) - {F_0}(\theta_s, \Xi_s) - \sigma F_1(\theta_s, \Xi_s) \bigr) \, \d s \\
            &+ \delta^2 \bigl( \varphi(\theta_t, \Xi_t)  - \varphi(\theta_0, \Xi_0)  \bigr)
            - \delta^2 \int_{0}^{t} {F^{\sigma}}(\theta_s, \Xi_s) \cdot \grad_{\theta} \varphi(\theta, \Xi) \, \d s \\
            & + \delta \sqrt{2} \, \sum_{j=1}^{J}\int_{0}^{t} \left( \d w_s^{(j)} \cdot \grad_{\xi^{(j)}} \right) \varphi(\theta_s, \Xi_s)
            + \nu \delta^2  \sqrt{2} \,\int_{0}^{t} \left( \d w_s \cdot \grad_{\theta} \right) \varphi(\theta_s, \Xi_s).
    \end{align*}
    By~\eqref{eq:bound_drift_sigma_squared}, the last term on the first line leads to a bound scaling as $\sigma^{2p}$.
    The other terms are bounded as in the proof of \cref{thm:convergence_continuous_equations} in the case where $G \in C^{2}(\torus^d, \real^K)$,
    which is possible because, like $\psi$ in that proof,
    the function $\varphi$ and its derivatives are polynomial functions in the variables $\xi^{(1)}, \dotsc, \xi^{(J)}$.
\end{proof}

\subsection{Analysis of the Discrete-time Numerical Method}%
\label{sub:analysis_of_the_numerical_method}

Before showing \cref{thm:convergence_discrete_equations_full}, we show a preparatory result.
\begin{proposition}
    \label{thm:convergence_discrete_equations}
    Assume that $G \in C^2(\torus^d, \real^K)$ and let $\approxtheta{n}$ be the solution obtained by~\eqref{eq:multiscale:evolution_theta_discrete_simplified}.
    Then there exists a constant $C = C(T, J)$ such that
    \begin{equation*}
        \forall (\sigma, \Delta) \in \real^+ \times \real^+, \qquad
        \sup_{0 \leq n \leq N} \expect \abs{\approxtheta{n} - \vartheta_{n \Delta}}^2
        \leq C \left( \Delta + \sigma^{2\beta} \right),
    \end{equation*}
    where
    \[
        \beta =
        \begin{cases}
            1 &\text{if } G \in C^2(\torus^d), \\
            2 &\text{if } G \in C^3(\torus^d).
        \end{cases}
    \]
\end{proposition}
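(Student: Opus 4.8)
The plan is to estimate the error $e_n := \approxtheta{n} - \vartheta_{n\Delta}$ directly at the level of the one-step recursion and then close a discrete Gr\"onwall inequality, as in a strong-error analysis of the Euler--Maruyama scheme but with a randomized, ensemble-based drift. Since we work on $\torus^d$ with a uniform prior (so $\Phi_R = \Phi$ and the prior drift drops) and take $D(\dummy) = I_d$, the discrete noise increment $\nu\sqrt{2\Delta}\,x_n$ in~\eqref{eq:multiscale:evolution_theta_discrete_simplified} is coupled to the Brownian motion of~\eqref{eq:averaged_equation} through $x_n = \Delta^{-1/2}(w_{(n+1)\Delta} - w_{n\Delta})$; hence the stochastic forcing cancels exactly in $e_{n+1}-e_n$, and $e_0 = 0$ because both dynamics start at $\theta_0$. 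Writing $F^{\sigma}$ for the drift as in the previous subsection, the recursion reduces to
\[
    e_{n+1} = e_n + \Delta\, F^{\sigma}(\approxtheta{n}, \approxXi{n}) + \int_{n\Delta}^{(n+1)\Delta} \grad\Phi_R(\vartheta_s)\,\d s.
\]

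The key is to split $F^{\sigma}(\approxtheta{n}, \approxXi{n})$ into the averaged drift, a conditionally mean-zero fluctuation, and a small bias. Using $F_0(\theta,\Xi) = -C(\Xi)\grad\Phi_R(\theta)$ and $\expect C(\approxXi{n}) = I_d$, I would write, when $G \in C^2(\torus^d, \real^K)$,
\[
    F^{\sigma}(\approxtheta{n}, \approxXi{n}) = -\grad\Phi_R(\approxtheta{n}) + M_n + R_n,
\]
with $M_n := (I_d - C(\approxXi{n}))\grad\Phi_R(\approxtheta{n})$ and $R_n := (F^{\sigma} - F_0)(\approxtheta{n}, \approxXi{n})$. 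Because the fresh samples $\approxXi{n}$ are independent of the history $\mathcal F_n$ up to step $n$ and of the Brownian increment on $[n\Delta,(n+1)\Delta]$, we have $\expect[M_n \mid \mathcal F_n] = 0$ (even after enlarging $\mathcal F_n$ by that increment), while $\grad\Phi_R$ is bounded and Lipschitz on the compact torus; the bias is controlled by~\eqref{eq:bound_drift_sigma}, so $\expect\abs{R_n}^2 \leq C\sigma^2$ follows from the constant, finite Gaussian moments of $\approxXi{n}$.

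Expanding $\expect\abs{e_{n+1}}^2$, I add and subtract $\grad\Phi_R(\vartheta_{n\Delta})$, writing $e_{n+1} = P_n + B_n + \Delta R_n + \Delta M_n$ with $P_n := e_n - \Delta[\grad\Phi_R(\approxtheta{n}) - \grad\Phi_R(\vartheta_{n\Delta})]$ and a discretization remainder $B_n := \int_{n\Delta}^{(n+1)\Delta}[\grad\Phi_R(\vartheta_s) - \grad\Phi_R(\vartheta_{n\Delta})]\,\d s$, which satisfies $\expect\abs{B_n}^2 \leq C\Delta^3$ by Lipschitz continuity and the standard bound $\expect\abs{\vartheta_s - \vartheta_{n\Delta}}^2 \leq C\Delta$. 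The crucial step is that, upon squaring, the cross terms pairing the fluctuation $\Delta M_n$ with the $\mathcal F_n$-measurable $P_n$ and with $B_n$ vanish in expectation by the conditional mean-zero property (the remaining cross term, against $\Delta R_n$, is of higher order), so $M_n$ enters only through its variance $\Delta^2 \expect\abs{M_n}^2 \leq C\Delta^2$. A Young inequality then absorbs the Lipschitz term into a factor $(1 + C\Delta)$ and the remaining contributions into $C(\Delta^2 + \Delta\,\sigma^{2\beta})$, yielding
\[
    \expect\abs{e_{n+1}}^2 \leq (1 + C\Delta)\,\expect\abs{e_n}^2 + C\bigl(\Delta^2 + \Delta\,\sigma^{2\beta}\bigr),
\]
and summing this discrete Gr\"onwall inequality over the $N = T/\Delta$ steps with $e_0 = 0$ gives $\sup_n \expect\abs{e_n}^2 \leq C(\Delta + \sigma^{2\beta})$, first with $\beta = 1$. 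For $G \in C^3(\torus^d, \real^K)$ I would refine the splitting to $F^{\sigma} = -\grad\Phi_R + M_n + \sigma F_1(\approxtheta{n}, \approxXi{n}) + \sigma^2 F_2(\approxtheta{n}, \approxXi{n})$ and note, exactly as in the continuous-time proof, that $F_1$ is an odd (cubic) polynomial in each $\xi^{(j)}$, hence conditionally mean-zero; absorbing $\sigma F_1$ into the fluctuation leaves only the genuine bias $\sigma^2 F_2$, of size $\sigma^2$ by~\eqref{eq:bound_drift_sigma_squared}, whose mean-square contributes $C\sigma^4$, so the same computation gives $\beta = 2$.

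The main obstacle is the correct accounting of the mean-zero fluctuation $M_n$. Treated as a generic perturbation through a Young inequality, it would enter with a factor $(1 + \Delta^{-1})$ and thus contribute at order $\Delta$ per step, i.e.\ $O(1)$ after summation over $N = T/\Delta$ steps, which would preclude convergence; it is the exact orthogonality afforded by the independence of the fresh samples $\approxXi{n}$ that reduces this contribution to order $\Delta^2$ per step. The second delicate point, needed only for the sharper $\beta = 2$ rate, is verifying that $F_1$ integrates to zero against the law of $\approxXi{n}$, so that it can be moved from the bias into the martingale part.
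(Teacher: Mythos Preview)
Your proposal is correct and follows essentially the same strategy as the paper: decompose the drift into the averaged term, a conditionally mean-zero fluctuation (your $M_n$, the paper's $Z_n$), and a bias of order $\sigma^\beta$ (your $R_n$, the paper's $X_n$), exploit the independence of the fresh samples $\approxXi{n}$ to kill the dangerous cross terms, and close a discrete Gr\"onwall inequality. The only notable difference is organizational: the paper first compares $\approxtheta{n}$ to the Euler--Maruyama iterates $\approxvartheta{n}$ of~\eqref{eq:averaged_equation} and then invokes the standard strong error bound $\expect\sup_n|\approxvartheta{n}-\vartheta_{n\Delta}|^2\le C\Delta$, whereas you compare directly to $\vartheta_{n\Delta}$ and carry the discretization remainder $B_n$ explicitly. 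Your route is slightly more self-contained; the paper's is marginally cleaner in that it separates the Euler--Maruyama error from the ensemble approximation error. For the $\beta=2$ case you are in fact more explicit than the paper, which simply asserts the refined bound on its $X_n$; the mechanism you identify---that $F_1$ is an odd polynomial in each $\xi^{(j)}$ and hence conditionally mean-zero, so it can be moved from the bias into the fluctuation---is exactly what is needed.
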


\begin{proof}
    Our strategy of proof is loosely based on that of~\cite[Theorem 2.4]{weinan2005analysis}.
    Let us denote by $\{\approxvartheta{n}\}_{n=0}^{N}$ the Euler--Maruyama approximation of the solution $\{\vartheta_t\}_{t\in[0,T]}$ to the averaged equation~\eqref{eq:averaged_equation},
    i.e.\ the discrete-time solution obtained from the iteration
    \begin{equation}
        \label{eq:euler_maruyama_averaged}
        \approxvartheta{n+1} =
        \approxvartheta{n} + \grad \Phi_R(\approxvartheta{n}) \, \Delta + \nu \sqrt{2 \Delta} \, x_n,
        \qquad \approxvartheta{0} = \approxtheta{0}.
    \end{equation}
    By the standard theory of numerical methods for SDEs~\cite{MR3097957,MR1214374},
    the difference between $\vartheta_{n \Delta}$ and its approximation $\approxvartheta{n}$ satisfies the bound
    \begin{equation}
        \label{eq:euler_maruyama_error}
        \expect \left(\sup_{0 \leq n \leq T/\Delta} \abs{\approxvartheta{n} - \vartheta_{n \Delta}}^2 \right)
        \leq C \Delta.
    \end{equation}
    Here and below, $C$ denotes a constant independent of $\sigma$ and $\Delta$
    that is allowed to change from occurrence to occurrence.
    Subtracting~\eqref{eq:euler_maruyama_averaged} from the equation for $\approxtheta{n}$~\eqref{eq:multiscale:evolution_theta_discrete_simplified},
    we obtain that the error $e_n := \approxtheta{n} - \approxvartheta{n}$ at step~$n$ satisfies
    \begin{align*}
        e_{n+1}
        &= e_n + \left( {F^{\sigma}}(\approxtheta{n})  + C(\approxXi{n}) \grad \Phi_R(\approxtheta{n}) \right) \Delta
        + \left( \grad \Phi_R(\approxvartheta{n})  - \grad \Phi_R(\approxtheta{n}) \right) \Delta \\
                      & \quad\qquad + \left( \grad \Phi_R(\approxtheta{n}) - C(\approxXi{n}) \grad \Phi_R(\approxtheta{n}) \right) \Delta\\
        &=: e_n + X_n \Delta + Y_n \Delta + Z_n \Delta.
    \end{align*}
    Therefore
    \begin{align*}
        \expect \abs{e_{n+1}}^2 =\,
        &  \expect \abs{e_n + \Delta X_n + \Delta \, Y_n }^2
        + \Delta^2 \expect \abs{Z_n}^2
        + 2 \Delta \expect  \bigl( (e_n + \Delta X_n + \Delta Y_n) \cdot Z_n \bigr).
    \end{align*}
    Using the tower property of conditional expectation, it holds
    \begin{align*}
        \abs{\expect \left(e_n  \cdot Z_n \right)}
        &= \abs{\expect \left( \expect \bigl( e_n  \cdot Z_n  \, | \, \approxtheta{n}, \approxvartheta{n} \bigr)  \right)}
        = \abs{\expect \left( e_n \cdot \expect \bigl( Z_n  \, | \, \approxtheta{n}, \approxvartheta{n} \bigr)  \right)} = 0,
    \end{align*}
    because, by the definition of $Z_n$ and the fact that $\xi_n^{(1)}, \dotsc, \xi_n^{(J)}$ are independent of $\approxtheta{n}$ and $\approxvartheta{n}$,
    it holds $\expect \left( Z_n  \, | \, \approxtheta{n}, \approxvartheta{n} \right)  = 0$.
    Thus, since by Young's inequality $(a + b)^2 \leq (1 + \varepsilon) a^2 + \left(1 + \frac{1}{\varepsilon}\right)b^2$ for any $a, b \in \real$,
    \begin{align}
        \notag
        \expect \abs{e_{n+1}}^2
        &= \expect \abs{e_n + \Delta X_n + \Delta \, Y_n }^2
        + \Delta^2 \expect \abs{Z_n}^2 + 2 \Delta^2 \expect (X_n \cdot Z_n + Y_n \cdot Z_n) \\
        \label{eq:bound_error_numerical_squared}
        &\leq (1 + \Delta)\expect \abs{e_n}^2 + \left(2 \Delta + 3 \Delta^2 \right) (\expect \abs{X_n}^2 + \expect \abs{Y_n}^2)
        + 3 \Delta^2 \expect \abs{Z_n}^2.
    \end{align}
    We now bound the terms one by one.
    \begin{itemize}
        \item
            By~\eqref{eq:bound_drift_sigma},
            it holds
            \begin{align*}
                \forall (\theta, \Xi) \in \torus^d \times (\real^d)^J, \qquad
                \abs{{F^{\sigma}} (\theta, \Xi) + C(\Xi) \grad \Phi_R(\theta)}
                \leq C \sigma \sum_{j=1}^{J} \abs{\xi^{(j)}}^3.
            \end{align*}
            Taking the expectation and using the fact that the moments of $\approxxi{n}{1}, \dotsc, \approxxi{n}{J}$ are constant in $n$,
            it holds $\expect \abs{X_n}^2 \leq C \sigma^2$.
            If $G$ is three times differentiable,
            we can carry out the Taylor expansion to the next order as in \eqref{eq:bound_drift_sigma_squared},
            leading to the refined bound $\expect \abs{X_n}^2 \leq C \sigma^4$.
        \item
            The expectation of $\abs{Y_n}^2$ can be bounded by using the Lipschitz continuity of $\Phi_R$:
            \[
                \expect \abs{Y_n}^2 = \expect \abs{ \grad \Phi_R(\approxvartheta{n})  - \grad \Phi_R(\approxtheta{n}) }^2
                \leq C \expect \abs{e_n}^2.
            \]
        \item
            To bound $\expect \abs{Z_n}^2$,
            we use the fact that $\Phi_R$ is uniformly bounded and that the moments of $\xi^{(1)}_n, \dotsc, \xi^{(1)}_n$ are constant in $n$:
            \begin{equation}
                \label{eq:bound_zs}
                \expect \abs{Z_n}^2 \leq
                2 \expect \abs{\grad \Phi_R(\approxtheta{n})}^2
                + 2 \expect \abs{C(\approxXi{n}) \grad \Phi_R(\approxtheta{n})}^2
                \leq C.
            \end{equation}
    \end{itemize}

    Going back to~\eqref{eq:bound_error_numerical_squared} and combining the bounds, we deduce
    \begin{align*}
        \forall \delta \leq 1, \qquad
        \expect \abs{e_{n+1}}^2
        &\leq (1 + \Delta)\expect \abs{e_n}^2 + C \Delta (\sigma^{2\beta} + \expect \abs{e_n}^2) + C \Delta^2 \\
        &\leq (1 + C \Delta) \expect \abs{e_n}^2 + C \Delta \left(\sigma^{2\beta} + \Delta \right).
    \end{align*}
    Let $\varepsilon = \sigma^{2\beta} + \Delta$.
    Applying the previous bound recursively,
    \begin{align*}
        \expect \abs{e_{n+1}}^2
        &\leq (1 + C \Delta)\bigl((1 + C \Delta)\expect \abs{e_{n-1}}^2  + C \Delta \varepsilon\bigr)  + C \Delta \varepsilon \\
        &\leq \dots \leq (1 + C \Delta)^{n+1} \, \expect \abs{e_{0}}^2  + C \Delta \varepsilon\sum_{i=0}^{n} (1 + C \Delta)^{i} \\
        &\leq \expect \abs{e_{0}}^2 \, \e^{C T} + (n\Delta) \e^{C T} C \varepsilon \leq C (\expect \abs{e_{0}}^2 + \varepsilon).
    \end{align*}
    Since $\expect \abs{e_0}^2 = 0$, and in view of~\eqref{eq:euler_maruyama_error}, this concludes the proof.
\end{proof}

We now prove the more general~\cref{thm:convergence_discrete_equations_full},
which concerns the discrete-time dynamics~\eqref{eq:multiscale:evolution_theta_discrete}.
To this end, it is useful to introduce, for $j = 1, \dotsc, J$ and $n = 0, \dotsc, N$,
an approximation $\approxupsilon{n}{j}$ of $\approxxi{n}{j}$ in~\eqref{eq:multiscale:evolution_theta_discrete_xis} such that
the processes~$\approxupsilon{\dummy}{j}$ have a compactly-supported autocorrelation function.
Notice first that, with the same notation as in~\eqref{eq:multiscale:evolution_theta_discrete}, it holds
\begin{align*}
    \approxxi{n}{j}
    &= \approxxi{0}{j}\e^{-n \frac{\Delta}{\delta^2}} + \sqrt{1 - \e^{-2 \frac{\Delta}{\delta^2}}} \sum_{m=1}^{n} x_{m-1}^{(j)} \e^{-(n-m) \frac{\Delta}{\delta^2}}.
\end{align*}
In order to work with iterates that are uncorrelated when far apart in time,
it is natural to define the approximation
\begin{equation}
    \label{eq:definition_zeta}
    \approxupsilon{n}{j} =
    \begin{cases}
        \approxxi{n}{j} \quad &\text{if $n \leq M$} \\
        \sqrt{1 - \e^{-2 \frac{\Delta}{\delta^2}}} \sum_{m=n-M}^{n} x_{m-1}^{(j)} \e^{-(n-m) \frac{\Delta}{\delta^2}}
        \quad &\text{if $n > M$}.
    \end{cases}
\end{equation}
We denote the collection $(\approxupsilon{n}{1}, \dotsc, \approxupsilon{n}{J})$ by $\approxUpsilon{n}$.
The following lemma,
proved in \cref{sec:proof_of_auxiliary_lemma},
is useful in the proof of \cref{thm:convergence_discrete_equations_full} below.
\begin{lemma}
    \label{lemma:auxiliary_result_aprrox_zeta}
    Let $\{\approxupsilon{n}{j}\}_{n=0}^N$ and $\{\approxxi{n}{j}\}_{n=0}^N$, for $j = 1, \dotsc, J$,
    be the discrete-time processes obtained by~\eqref{eq:multiscale:evolution_theta_discrete_xis} and~\eqref{eq:definition_zeta}.
    Then the following bound holds for a constant $C$ independent of $\delta$:
    \begin{align}
        \label{eq:bound_modified_xis}
        \forall j \in \{1, \dotsc, J\}, \quad \forall  n \in \{0, \dotsc, N\}, \qquad
        \expect \abs{\approxxi{n}{j} - \approxupsilon{n}{j}}^4
        &\leq  C \e^{- 4 (M+1) \frac{\Delta}{\delta^2}}.
    \end{align}
    Consequently, it holds
    \begin{align}
        \label{eq:bound_difference_matrices}
        \forall  n \in \{0, \dotsc, N\}, \qquad
        \expect \abs{C(\approxUpsilon{n}) - C(\approxXi{n})}^2
        \leq C \e^{- 2 (M+1)\frac{\Delta}{\delta^2}},
    \end{align}
    where we used the notation $C(\approxUpsilon{n}) = \frac{1}{J} \sum_{j=1}^{J} \approxupsilon{n}{j} \otimes \approxupsilon{n}{j}$.
\end{lemma}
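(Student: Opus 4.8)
The plan is to exploit the explicit closed-form expression for the discrete OU process stated just above the lemma, together with the fact that the initialization $\approxxi{0}{j} \sim \mathcal N(0, I_d)$ places the process at stationarity. For $n \leq M$ we have $\approxxi{n}{j} = \approxupsilon{n}{j}$ by the very definition~\eqref{eq:definition_zeta}, so~\eqref{eq:bound_modified_xis} holds trivially; the work is for $n > M$. Subtracting~\eqref{eq:definition_zeta} from the closed form of $\approxxi{n}{j}$ gives
\begin{align*}
    \approxxi{n}{j} - \approxupsilon{n}{j}
    = \approxxi{0}{j} \, \e^{-n \Delta/\delta^2}
    + \sqrt{1 - \e^{-2\Delta/\delta^2}} \sum_{m=1}^{n-M-1} x_{m-1}^{(j)} \, \e^{-(n-m)\Delta/\delta^2}.
\end{align*}
Since $\approxxi{0}{j}$ and the $x_{m-1}^{(j)}$ are independent $\mathcal N(0, I_d)$ vectors, this difference is a centered Gaussian vector whose components are independent with common variance $\e^{-2n\Delta/\delta^2} + (1 - \e^{-2\Delta/\delta^2}) \sum_{m=1}^{n-M-1} \e^{-2(n-m)\Delta/\delta^2}$.

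The key computational point is that this variance telescopes \emph{exactly}. Writing $a = \e^{-\Delta/\delta^2}$ and summing the geometric series yields $(1 - a^2) \sum_{m=1}^{n-M-1} a^{2(n-m)} = a^{2(M+1)} - a^{2n}$, so the variance collapses to precisely $a^{2(M+1)} = \e^{-2(M+1)\Delta/\delta^2}$, uniformly in $n$. This cancellation is exactly a manifestation of stationarity: truncating the memory of the process to its last $M$ increments discards only the stationary tail contribution of size $\e^{-2(M+1)\Delta/\delta^2}$, with no transient term surviving. Consequently $\abs{\approxxi{n}{j} - \approxupsilon{n}{j}}^2$ equals $\e^{-2(M+1)\Delta/\delta^2}$ times a $\chi^2_d$ variable, and the identity $\expect\bigl[(\chi^2_d)^2\bigr] = d(d+2)$ delivers~\eqref{eq:bound_modified_xis} with $C = d(d+2)$, manifestly independent of $\delta$.

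To deduce~\eqref{eq:bound_difference_matrices} from~\eqref{eq:bound_modified_xis}, I would apply the outer-product identity $a \otimes a - b \otimes b = (a - b) \otimes a + b \otimes (a - b)$ with $a = \approxxi{n}{j}$ and $b = \approxupsilon{n}{j}$. Writing $r_n^{(j)} = \approxxi{n}{j} - \approxupsilon{n}{j}$ and taking Frobenius norms (using $\abs{u \otimes v}_{\rm F} = \abs*{u}\abs*{v}$) gives
\begin{align*}
    \abs{C(\approxXi{n}) - C(\approxUpsilon{n})}
    \leq \frac{1}{J} \sum_{j=1}^{J} \abs*{r_n^{(j)}} \, \bigl( \abs*{\approxxi{n}{j}} + \abs*{\approxupsilon{n}{j}} \bigr).
\end{align*}
After squaring, using convexity of $x \mapsto x^2$ to pass the average inside, and applying Cauchy--Schwarz to each summand, each term is bounded by $\sqrt{\expect \abs{r_n^{(j)}}^4}$ times $\sqrt{\expect\bigl( \abs*{\approxxi{n}{j}} + \abs*{\approxupsilon{n}{j}} \bigr)^4}$. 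The first factor contributes the decay $\e^{-2(M+1)\Delta/\delta^2}$ via~\eqref{eq:bound_modified_xis}; the second is bounded uniformly in $n$ and $\delta$, since $\approxxi{n}{j}$ is stationary $\mathcal N(0, I_d)$ and $\approxupsilon{n}{j}$ is centered Gaussian with per-component variance $1 - \e^{-2(M+1)\Delta/\delta^2} \leq 1$, so both have fourth moments depending only on $d$. This yields~\eqref{eq:bound_difference_matrices}.

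The only genuinely delicate step is the exact variance computation in the second paragraph; everything else is routine. I do not anticipate a real obstacle, precisely because starting the auxiliary processes from their invariant measure makes the truncation error a pure stationary tail, leaving no transient contribution to complicate the estimate.
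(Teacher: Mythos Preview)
Your proof is correct and, for the first claim~\eqref{eq:bound_modified_xis}, takes a cleaner route than the paper's. The paper factors out $\e^{-(M+1)\Delta/\delta^2}$ from the difference, applies the crude inequality $|a+b|^4 \leq 8|a|^4 + 8|b|^4$ to separate the initial-condition term from the sum, and then bounds the fourth moment of the remaining sum by expanding it directly (reducing first to $d=1$) and summing the resulting geometric series; the prefactor $(1-\e^{-2\Delta/\delta^2})^2$ cancels against the denominator only at the end. Your approach is more direct: you observe that the difference is itself a centered Gaussian vector with i.i.d.\ components, compute its per-component variance exactly via the telescoping geometric sum, and then invoke $\expect[(\chi^2_d)^2]=d(d+2)$. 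This exploits stationarity of the initial condition $\approxxi{0}{j}\sim\mathcal N(0,I_d)$ in an essential way---the $a^{2n}$ terms cancel precisely because of it---whereas the paper's argument would go through for any initial law with finite fourth moment. What your approach buys is a sharper explicit constant and no need to expand a fourth power of a sum; what the paper's approach buys is slightly greater robustness to the initialization. For the second claim~\eqref{eq:bound_difference_matrices}, both arguments are essentially the same: the outer-product identity followed by Cauchy--Schwarz/H\"older and uniform fourth-moment bounds on $\approxxi{n}{j}$ and $\approxupsilon{n}{j}$.
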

\begin{proof}
    [Proof of \cref{thm:convergence_discrete_equations_full}]
    Throughout the proof, $C$ denotes a constant independent of $\delta$, $\sigma$ and $\Delta$,
    allowed to change from occurrence to occurrence.
    Let $e_n = \approxtheta{n} - \approxvartheta{n}$,
    where $\approxvartheta{n}$ is as defined in~\eqref{eq:euler_maruyama_averaged}.
    In view of~\eqref{eq:euler_maruyama_error}, it is sufficient to obtain a bound on $e_n$ with the same right-hand side as in~\eqref{eq:general_discrete_bound}.
    Using the same definition for $X_n$ and $Y_n$ as in the proof of \cref{thm:convergence_discrete_equations},
    we have
    \begin{align*}
        e_{n+1} = e_n + X_n \Delta + Y_n \Delta + Z_n \Delta + W_n \Delta,
    \end{align*}
    with now
    \begin{align*}
        Z_n &=  C(\approxUpsilon{n}) \grad \Phi_R(\approxtheta{n}) - C(\approxXi{n}) \grad \Phi_R(\approxtheta{n}) ,
        \qquad C(\approxUpsilon{n}) = \frac{1}{J} \sum_{j=1}^{J} \approxupsilon{n}{j} \otimes \approxupsilon{n}{j},\\
        W_n &=  \grad \Phi_R(\approxtheta{n}) - C(\approxUpsilon{n}) \grad \Phi_R(\approxtheta{n}) .
    \end{align*}
    Using the fact that $e_0 = 0$, we deduce
    \begin{align*}
        \abs{e_{n}}^2 &\leq 4\Delta^2 \abs{\sum_{m=0}^{n-1} X_m}^2 + 4\Delta^2 \abs{\sum_{m=0}^{n-1} Y_m}^2 + 4\Delta^2 \abs{ \sum_{m=0}^{n-1} Z_m}^2 + 4\Delta^2 \abs{ \sum_{m=0}^{n-1} W_m}^2\\
                            &\leq 4n\Delta^2 \sum_{m=0}^{n-1} \abs{X_m}^2 + 4n\Delta^2 \sum_{m=0}^{n-1} \abs{Y_m}^2 + 4n\Delta^2 \sum_{m=0}^{n-1} \abs{Z_m}^2 + 4\Delta^2 \abs{ \sum_{m=0}^{n-1} W_m}^2.
    \end{align*}
    Consequently, for $n \leq T/\Delta$,
    \begin{align}
        \label{eq:total_error_discrete_refined}
        \sup_{0 \leq m \leq n}  \expect \abs{e_{m}}^2
          &\leq 4T \Delta \sum_{m=0}^{n-1} \bigl( \expect \abs{X_m}^2 + \expect \abs{Y_m}^2 + \expect \abs{Z_m}^2\bigr)
          + 4\Delta^2 \sup_{0 \leq m \leq n} \left( \expect \abs{ \sum_{\ell=0}^{m-1} W_\ell}^2\right).
    \end{align}
    The first term is bounded as in the proof of~\cref{thm:convergence_discrete_equations}.
    For the second term, we use the Lipschitz continuity of $\Phi_R$ to deduce
    \[
        \expect \abs{Y_m}^2 = \expect \abs{ \grad \Phi_R(\approxtheta{m})  - \grad \Phi_R(\approxvartheta{m}) }^2
        \leq C \expect \abs{e_m}^2
        \leq C \sup_{0 \leq \ell \leq m} \expect \abs{e_{\ell}}^2 .
    \]
    For the third term, we use~\eqref{eq:bound_difference_matrices}, which gives
    \begin{align*}
        \expect \abs{Z_n}^2
        &\leq C \expect \abs{C(\approxUpsilon{n}) - C(\approxXi{n})}^2
        \leq C \e^{- 2 (M+1)\frac{\Delta}{\delta^2}}.
    \end{align*}
    In order to bound the last term of~\eqref{eq:total_error_discrete_refined}, we calculate
    \begin{align*}
        \expect \abs{ \sum_{\ell=0}^{m-1} W_\ell}^2
                &= \sum_{\ell=0}^{m-1} \sum_{k=0}^{m-1} \expect (W_\ell \cdot W_k) \\
                &= \sum_{\ell=0}^{m-1} \expect \abs{W_{\ell}}^2
                + 2 \sum_{\ell=0}^{m-1} \sum_{k=\ell + 1}^{(\ell + M) \wedge (m -1)} \expect (W_\ell \cdot W_k)
                + 2 \sum_{\ell=0}^{m-1} \sum_{k=\ell + M+1}^{m-1} \expect (W_\ell \cdot W_k) \\
                &\leq \sum_{\ell=0}^{m-1} \expect \abs{W_{\ell}}^2
                + 2 M \sum_{\ell=0}^{m-1} \expect \abs{W_{\ell}}^2
                + 2 \sum_{\ell=0}^{m-1} \sum_{k=\ell + M+1}^{m-1} \expect (W_\ell \cdot W_k).
    \end{align*}
    Since $\grad \Phi_R$ is bounded and the moments of $\approxupsilon{n}{j}$ are bounded uniformly in $n$ and $j$,
    we can bound the first and second sums,
    which leads to
    \begin{align}
        \label{eq:intermediate_equation}
        \expect \abs{ \sum_{\ell=0}^{m-1} W_\ell}^2
                &\leq C m (1 + 2M) +
                2 \sum_{\ell=0}^{m-1} \sum_{k=\ell + M + 1}^{m-1} \expect (W_\ell \cdot W_k)
    \end{align}
    We now bound the second term uniformly for $\ell$ and $k$ satisfying $k \geq \ell + M + 1$.
    Using the tower property of conditional expectation,
    we notice that
    \begin{align*}
        \abs{\expect (W_\ell \cdot W_k)}
        &= \abs{\expect \bigl(\expect (W_\ell \cdot W_k\, | \, \approxtheta{\ell}, \approxUpsilon{\ell})\bigr)}
        = \abs{\expect \bigl(W_\ell \cdot \expect (W_k \, | \, \approxtheta{\ell}, \approxUpsilon{\ell})\bigr)} \\
        &= \abs{\expect \bigl(W_\ell \cdot \expect (W_k\, | \, \approxtheta{\ell}, \approxUpsilon{\ell})\bigr)}
        \leq \sqrt{\expect \abs{W_\ell}^2 \expect \abs{\expect(W_k \, | \, \approxtheta{\ell}, \approxUpsilon{\ell})}^2}
        \leq C \sqrt{\expect \abs{\expect(W_k \, | \, \approxtheta{\ell}, \approxUpsilon{\ell})}^2}.
    \end{align*}
    Using the notations $\expect_{\ell} = \expect(\dummy \, | \, \approxtheta{\ell}, \approxUpsilon{\ell})$ and $\expect_\ell(\dummy_1 \, | \, \dummy_2) = \expect(\dummy_1 \,|\, \approxtheta{\ell}, \approxUpsilon{\ell}, \dummy_2)$ for conciseness,
    we obtain
    \begin{align}
        \notag
        \expect_\ell(W_k)
        &= \frac{1}{J} \sum_{j=1}^{J}\expect_\ell \biggl( \Bigl(I_d - \approxupsilon{k}{j} \otimes \approxupsilon{k}{j}\Bigr) \grad \Phi_R(\approxtheta{k}) \biggr)
        = \frac{1}{J} \sum_{j=1}^{J}\expect_\ell \biggl( \expect_\ell \Bigl(  \bigl(I_d - \approxupsilon{k}{j} \otimes \approxupsilon{k}{j}\bigr) \grad \Phi_R(\approxtheta{k}) \, | \, \approxupsilon{k}{j} \Bigr) \biggr) \\
        \notag
        &= \frac{1}{J} \sum_{j=1}^{J}\expect_\ell \biggl( \Bigl(I_d - \approxupsilon{k}{j} \otimes \approxupsilon{k}{j}\Bigr) \expect_\ell \left( \grad \Phi_R(\approxtheta{k}) \, | \, \approxupsilon{k}{j} \right) \biggr) \\
        \label{eq:intermediate_convergence_discrete}
        &= \frac{1}{J} \sum_{j=1}^{J} \expect_\ell  \left(I_d - \approxupsilon{k}{j} \otimes \approxupsilon{k}{j}\right)  \expect_\ell \left( \grad \Phi_R(\approxtheta{k}) \right) \\
        \notag
        &\quad + \frac{1}{J} \sum_{j=1}^{J}  \expect_\ell \biggl( \Bigl(I_d - \approxupsilon{k}{j} \otimes \approxupsilon{k}{j}\Bigr) \Bigl( \expect_\ell \bigl( \grad \Phi_R(\approxtheta{k}) \, | \, \approxupsilon{k}{j} \bigr) - \expect_\ell \bigl( \grad \Phi_R(\approxtheta{k}) \bigr)  \Bigr) \biggr) .
    \end{align}
    Now note that, for any $k > \ell + M$ and any $j \in \{1, \dotsc, J\}$,
    the random variable $\approxupsilon{k}{j}$ is independent of~$\approxtheta{\ell}$ and~$\approxUpsilon{\ell}$,
    and it has distribution
    \[
        \mathcal N\Bigl(0,  \bigl(1 - \e^{-2(M+1) \frac{\Delta}{\delta^2}} \bigr) \, {I_d} \Bigr).
    \]
    Consequently, we can calculate the first expectation in~\eqref{eq:intermediate_convergence_discrete} exactly.
    Using H\"older's inequality for the other term,
    we obtain
    \begin{align*}
        \abs{\expect_\ell(W_k)}
        \leq
        C\e^{-2(M+1)\frac{\Delta}{\delta^2}} \,+\,
        C \sum_{j=1}^{J} \expect_\ell  \abs{ \expect_\ell \left(\grad \Phi_R(\approxtheta{k}) \,|\, \upsilon^{(j)}_k\right) - \expect_\ell \left(\grad \Phi_R(\approxtheta{k})\right)  }^2.
    \end{align*}
    Therefore, employing Jensen's inequality, we deduce
    \begin{align}
        \label{eq:bound_last_term}
        \expect \abs{\expect_\ell(W_k)}^2
        &\leq
        C\e^{-4(M+1)\frac{\Delta}{\delta^2}} \,+\,
        C \sum_{j=1}^{J} \expect  \abs{ \expect_\ell \left(\grad \Phi_R(\approxtheta{k}) \,|\, \upsilon^{(j)}_k\right) - \expect_\ell \left(\grad \Phi_R(\approxtheta{k})\right)  }^4 \\
        \notag
        &=: C\e^{-4(M+1)\frac{\Delta}{\delta^2}} + \, C \sum_{j=1}^{J} \expect(B_j).
    \end{align}
    Since $\expect_\ell \bigl(\grad \Phi_R(\approxtheta{k-M}) \,|\, \approxupsilon{k}{j}\bigr) = \expect_\ell \bigl(\grad \Phi_R(\approxtheta{k-M})\bigr)$, it holds
    \begin{align*}
        B_j
        &\leq 2^3 \abs{ \expect_\ell \left(\grad \Phi_R(\approxtheta{k}) \,|\, \approxupsilon{k}{j}\right) - \expect_\ell \left(\grad \Phi_R(\approxtheta{k-M}) \,|\, \approxupsilon{k}{j}\right)}^4
        + 2^3 \abs{ \expect_\ell \left(\grad \Phi_R(\approxtheta{k-M})\right) - \expect_\ell \left(\grad \Phi_R(\approxtheta{k})\right)  }^4 \\
        &= 2^3 \abs{ \expect_\ell \left(\grad \Phi_R(\approxtheta{k}) - \grad \Phi_R(\approxtheta{k-M}) \,|\, \approxupsilon{k}{j}\right)}^4
        + 2^3 \abs{ \expect_\ell \left(\grad \Phi_R(\approxtheta{k-M}) - \grad \Phi_R(\approxtheta{k})\right)  }^4.
    \end{align*}
    Using Jensen's inequality and the Lipschitz continuity of $\grad \Phi_R$, we deduce
    \begin{align*}
        B_j
        &\leq C \expect_\ell \left(\abs{\approxtheta{k} - \approxtheta{k-M}}^4 \,|\, \approxupsilon{k}{j}\right)
        + C \expect_\ell \abs{ \approxtheta{k-M} -  \approxtheta{k}}^4,
    \end{align*}
    so $\expect (B_j) \leq C \expect \abs{ \approxtheta{k-M} -  \approxtheta{k}}^4$.
    Using the bound
    \[
        \expect \abs{\approxtheta{k} -  \approxtheta{k-M}}^4
        = \expect \abs{\sum_{i=k-M}^{k-1} F^{\sigma}(\approxtheta{i}, \approxXi{i}) \Delta + \sum_{i=k-M}^{k-1} \nu\sqrt{2 \Delta} x_i}^4
        \leq C \, (M^4 \Delta^4 + M^2 \Delta^2),
    \]
    which is justified because $F^{\sigma}(\theta, \Xi) \leq C \left(1 + \sum_{j=1}^{J}\abs{\xi^{(j)}}^3\right)$ by~\eqref{eq:bound_drift_sigma},
    we deduce by going back to~\eqref{eq:bound_last_term} that
    \begin{align*}
        \expect \abs{\expect_\ell(W_k)}^2
        &\leq
        C\e^{-4(M+1)\frac{\Delta}{\delta^2}} \,+\, C M^4 \Delta^4 + C M^2 \Delta^2.
    \end{align*}
    Taking the square root and returning to the usual notation,
    we obtain
    \begin{align*}
        \sqrt{\expect \abs{\expect(W_k \, | \, \approxtheta{\ell}, \approxUpsilon{\ell})}^2}
        \leq         C\e^{-2(M+1)\frac{\Delta}{\delta^2}} \,+\, C M^2 \Delta^2 + C M \Delta.
    \end{align*}
    Employing this bound in~\eqref{eq:intermediate_equation},
    we deduce
    \begin{align*}
        \Delta^2 \expect \abs{ \sum_{\ell=0}^{m-1} W_\ell}^2
                &\leq C \Delta(1 + 2M) + C \left( \e^{-2(M+1)\frac{\Delta}{\delta^2}} + M^2 \Delta^2 + M \Delta \right) \\
                &\leq C \left( \Delta + M \Delta + M^2 \Delta^2 + \e^{-2(M+1)\frac{\Delta}{\delta^2}} \right).
    \end{align*}
    Letting $M = \lfloor \log(1 + \delta^{-1}) \, \frac{\delta^2}{\Delta} \rfloor$,
    we obtain
    \begin{align}
        \notag
        \Delta^2 \expect \abs{ \sum_{\ell=0}^{m-1} W_\ell}^2
                &\leq C \left(\Delta + \log(1 + \delta^{-1}) \delta^2  + \left(\log (1 + \delta^{-1})\right)^2\delta^4 + \frac{1}{\abs*{1 + \delta^{-1}}^2} \right) \\
                \label{eq:theorem_intermediate_estimate}%
                &\leq  C  \bigl(\Delta + \log(1 + \delta^{-1}) \delta^2 \bigr) \qquad \forall  \delta \in (0, 1].
    \end{align}
    Here we used that, by concavity of the logarithm,
    \[
        \log(1 + \delta^{-1}) = \log(1 + \delta^{-1}) - \log (1) \geq \frac{\delta^{-1}}{1 + \delta^{-1}} \geq \frac{\delta^{-2}}{|1 + \delta^{-1}|^2},
    \]
    so
    \[
        \log(1 + \delta^{-1}) \delta^2 \geq \frac{1}{\abs{1 + \delta^{-1}}^2}.
    \]
    Note that the second term in~\eqref{eq:theorem_intermediate_estimate} vanishes in the limit $\delta \to 0$,
    i.e.\ when $\approxxi{n}{j}$ are drawn independently from $\mathcal N(0, {I_d})$ at each iteration.
    In this case, we recover the statement of \cref{thm:convergence_discrete_equations}.

    Combining everything in~\eqref{eq:total_error_discrete_refined}, we obtain
    \begin{align*}
        \sup_{0 \leq m \leq n}  \expect \abs{e_{m}}^2
          &\leq C \left( \Delta + \sigma^{2\beta}  +  \log(1 + \delta^{-1}) \delta^2
          +  \Delta \sum_{m=0}^{n-1} \left( \sup_{0 \leq \ell \leq m}  \expect \abs{e_{\ell}}^2  \right)\right).
    \end{align*}
    By the discrete Gr\"onwall lemma,
    we deduce
    \begin{align*}
        \sup_{0 \leq n \leq N}  \expect \abs{e_{n}}^2
          &\leq C \e^{CT} \left( \Delta + \sigma^{2\beta} +  \log(1 + \delta^{-1}) \delta^2 \right),
    \end{align*}
    which concludes the proof.
\end{proof}

\section{Auxiliary Results for \texorpdfstring{\cref{thm:convergence_continuous_equations}}{the main Convergence Theorem}}%
\label{sec:auxiliary_results}
\begin{lemma}
    \label{lemma:bound_supremum_ou}
    Let $\delta > 0$ be a fixed parameter and
    let  $X_t$ denote the solution to the scalar Ornstein--Uhlenbeck equation with stationary initial condition,
    \[
        \d X_t = - \frac{1}{\delta^2} \, X_t \, \d t + \sqrt{\frac{2}{\delta^2}} \, \d W_t, \qquad X_0 \sim \mathcal N(0, 1).
    \]
    It holds
    \[
        \forall p \in (1, \infty),
        \qquad \expect \left( \sup_{0 \leq t \leq T} |X_t|^p \right) \leq C \, \left(1 + \log\left(1 + \frac{T}{\delta^2}\right)\right)^{p/2},
    \]
    for a constant $C$ independent of $T$.
\end{lemma}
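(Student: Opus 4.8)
The plan is to reduce the statement to a bound on the supremum of a \emph{standard} stationary Ornstein--Uhlenbeck process over a long time interval, and then to control this supremum by decomposing the interval into unit pieces. First I would rescale time: setting $Y_s = X_{\delta^2 s}$ and $\widetilde W_s = \delta^{-1} W_{\delta^2 s}$, one checks directly that $\{Y_s\}_{s \geq 0}$ is a stationary OU process solving $\d Y_s = - Y_s \, \d s + \sqrt 2 \, \d \widetilde W_s$ with $Y_0 \sim \mathcal N(0,1)$, where $\widetilde W$ is again a standard Brownian motion. Since $\sup_{0 \leq t \leq T} \abs{X_t} = \sup_{0 \leq s \leq S} \abs{Y_s}$ with $S = T/\delta^2$, it suffices to prove $\expect\bigl(\sup_{0 \leq s \leq S} \abs{Y_s}^p\bigr) \leq C (1 + \log(1 + S))^{p/2}$ for the normalized process, with $C$ depending only on $p$; substituting $S = T/\delta^2$ then yields the claim, with a constant independent of both $T$ and $\delta$.

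Next I would split $[0, S]$ into the $n := \lceil S \rceil$ unit intervals $[k, k+1]$ and set $M_k = \sup_{k \leq s \leq k+1} \abs{Y_s}$, so that $\sup_{0 \leq s \leq S} \abs{Y_s} \leq \max_{0 \leq k \leq n-1} M_k$. By stationarity every $M_k$ has the law of $M_0 = \sup_{0 \leq s \leq 1}\abs{Y_s}$. The key ingredient is a sub-Gaussian tail bound for $M_0$: since $\{Y_s\}$ is a centered Gaussian process with $\sup_s \operatorname{Var}(Y_s) = 1$, and since $a := \expect\bigl(\sup_{0 \leq s \leq 1} Y_s\bigr)$ is a finite absolute constant (the canonical metric satisfies $\expect\abs{Y_s - Y_{s'}}^2 = 2(1 - \e^{-\abs{s-s'}}) \leq 2\abs{s-s'}$, so Dudley's entropy bound applies), the Borell--TIS concentration inequality gives $\proba(\sup_{0\leq s \leq 1} Y_s > a + r) \leq \e^{-r^2/2}$, and the same applied to $-Y$ yields $\proba(M_0 > a + r) \leq 2\,\e^{-r^2/2}$ for all $r > 0$.

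It then remains to combine the unit-interval estimates by a union bound and integrate the tail. From $\proba(\max_k M_k > a + r) \leq 2n\,\e^{-r^2/2}$ and the layer-cake formula $\expect(\max_k M_k^p) = \int_0^\infty p\, t^{p-1} \proba(\max_k M_k > t)\,\d t$, I would split the integral at $t_\star = a + \sqrt{2\log(2n)}$: the contribution below $t_\star$ is at most $t_\star^p$, while above $t_\star$ the union bound makes the Gaussian tail integrable with an $\bigo{1}$ contribution. This gives $\expect(\max_k M_k^p) \leq C\,(a + \sqrt{\log n})^p$, and since $a$ is an absolute constant and $\log n \leq C(1 + \log(1+S))$, the elementary inequality $(1 + \sqrt x)^p \leq C(1 + x)^{p/2}$ closes the argument.

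I expect the main obstacle to be establishing the sub-Gaussian tail for the single-interval supremum $M_0$ with a mean $a$ that is a fixed constant independent of $S$ (hence of $\delta$); it is precisely the constancy of $a$ that forces the final growth to be exactly $\sqrt{\log}$ rather than a power of $S$. An alternative, more self-contained route avoiding Borell--TIS would use the time-change representation $Y_s = \e^{-s} W_{\e^{2s}}$ for a standard Brownian motion $W$, reducing the supremum to $\sup_{1 \leq u \leq \e^{2S}} u^{-1/2}\abs{W_u}$ and controlling it through Doob's maximal inequality applied dyadically in $u$; the resulting $\log\log(\e^{2S})$-type growth is consistent with the law of the iterated logarithm and with the target rate.
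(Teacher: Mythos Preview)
Your proposal is correct and takes a genuinely different route from the paper. After the same time rescaling to the standard OU process~$Y$, the paper uses the Doob--Lamperti representation $Y_t = \e^{-t}W(\e^{2t})$ to rewrite $\sup_{0\le t\le T}\abs{Y_t}$ as $\sup_{1\le s\le \e^{2T}}\abs{W_s}/\sqrt{s}$, then inserts a factor $\sqrt{\log\log(2+s)}$ and invokes a cited sub-Gaussian tail bound for $N(S)=\sup_{s\le S}\abs{W_s}/\sqrt{s\log\log(2+s)}$ (a law-of-the-iterated-logarithm type estimate) to conclude. Your primary argument instead splits $[0,S]$ into unit intervals, uses stationarity to reduce to a single unit supremum, obtains a sub-Gaussian tail for that supremum via Borell--TIS (with $\expect\sup_{[0,1]}Y$ bounded through Dudley's entropy integral), and finishes with a union bound and layer-cake. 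This is more self-contained and does not rely on the specific Brownian time-change or an external LIL-type reference; it would also generalize to other stationary Gaussian processes with bounded unit-interval supremum mean. One minor point: the contribution above $t_\star$ is not literally $O(1)$ in the way you phrase it, but writing $\max_k M_k^p \le 2^{p-1}\bigl(t_\star^p + (\max_k M_k - t_\star)_+^p\bigr)$ and using $\proba(\max_k M_k > t_\star + s) \le \e^{-s^2/2}$ immediately gives an $n$-independent bound on the second piece, so the conclusion stands. Your ``alternative route'' via $Y_s=\e^{-s}W(\e^{2s})$ is in fact precisely the paper's argument.
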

\begin{proof}
    The strategy of the proof parallels that in~\cite[Theorem A.1]{MR2029590}, so here we give only a sketch.
    We use the notations $W(t)$ and $W_t$ interchangeably.
    The process $\{X_t\}$ is equal in law to $\{Y_{t/\delta^2}\}$,
    where $Y_t$ is the solution to
    \[
        \d Y_t = - Y_t \, \d t + \sqrt{2} \, \d W_t, \qquad Y_0 \sim \mathcal N(0, 1).
    \]
    so we can assume without loss of generality that $\delta = 1$.
    The process $\{Y_t\}_{t\geq 0}$ is equivalent in law to the process $Z_t = \e^{-t} W(\e^{2t})$;
    see, for example,~\cite[Chapter 1]{pavliotis2011applied}.
    Therefore, it holds
    \begin{align*}
        \expect \left( \sup_{0 \leq t \leq T} |Y_t|^p \right)
        &= \expect \left( \sup_{0 \leq s \leq \e^{2T}} \abs{\frac{W_s}{\sqrt{s}}}^p \right) \\
        &\leq \left(\log \log (2 + \e^{2T})\right)^{p/2} \, \expect \left( \sup_{0 \leq s \leq \e^{2T}} \abs{\frac{W_s}{\sqrt{s \log \log (2 + s)}}}^p \right) \\
        &= \left(\log \log (2 + \e^{2T})\right)^{p/2}  \, \expect \abs{N\left(\e^{2T}\right)}^p,
    \end{align*}
    where, for $S \geq 0$,
    \begin{align*}
        N(S) =  \sup_{0 \leq s \leq S} \abs{\frac{W_s}{\sqrt{s \log \log (2 + s)}}}.
    \end{align*}
    Now clearly $\log(\e^{2T} + \, 2) \leq \log(\e^{2 T}) + 2 = 2 T + 2$,
    so $\log \log (2 + \e^{2T}) \leq 1 + \log(1 + T)$.
    It is shown in the proof of~\cite[Theorem A.1]{MR2029590}, and in the references therein,
    that there are constants~$C$ and~$\sigma$ such that
    \[
        \forall S > 0, \quad \forall \lambda > 0,
        \qquad \proba \bigl(N(S) > \lambda\bigr) \leq C \e^{- \frac{\lambda^2}{4 \sigma^2}}.
    \]
    Therefore
    \begin{align*}
        \expect \abs{N(\e^{2T})}^p
        &= \int_{0}^{\infty} \proba \bigl(\abs{N(\e^{2T})}^p > \lambda \bigr) \, \d \lambda \\
        &= \int_{0}^{\infty} \proba \bigl(N(\e^{2T}) > \lambda^{1/p} \bigr) \, \d \lambda
        \leq \int_{0}^\infty C \e^{- \frac{\lambda^{2/p}}{4 \sigma^2}} \, \d \lambda \leq K,
    \end{align*}
    for $K$ independent of $S$,
    which concludes the proof.
\end{proof}

\begin{lemma}
    \label{lemma:mollifier}
    Assume $\Phi_R \in C^2(\torus^d, \real)$,
    and let $\Phi_R^{\varepsilon}$
    denote a mollification with parameter~$\varepsilon$ of $\Phi_R$,
    that~is
    \begin{equation}
        \label{eq:mollification}
        \Phi_R^{\varepsilon} = \varrho_{\varepsilon} * \Phi_R,
        \qquad \varrho_{\varepsilon} = \varepsilon^{-d} \varrho\bigl( \varepsilon^{-1} \theta \bigr),
        \qquad \varrho\colon \real^d \to \real; \theta \mapsto
        \begin{cases}
            k\exp \left( - \frac{1}{1 - \abs{\theta}^2} \right), & \text{if } \abs{\theta} \leq 1, \\
            0, & \text{if } \abs{\theta} > 1, \\
        \end{cases}
    \end{equation}
    where $k$ is a constant such that $\varrho$ integrates to 1 over $\real^d$
    and $*$ is the usual convolution of functions on $\real^d$
    (identifying $\Phi_R$ with a 1-periodic function over $\real^d$).
    Then there is $C$ independent of $\varepsilon$ such that
    \begin{equation}
        \label{eq:bound_third_derivatives}
        \forall (i,j,k) \in \{1, \dotsc, d\}^3, \qquad
        \lVert \partial_{\theta_i} \partial_{\theta_j} \partial_{\theta_k} \Phi_R^\varepsilon(\theta) \rVert_{L^{\infty}(\torus^d)}
        \leq C \varepsilon^{-1} \sup_{\theta \in \torus^d} \abs{\hessian \Phi_R(\theta)}[\rm F],
    \end{equation}
    where $\abs{\dummy}[\rm F]$ denotes the Frobenius norm,
    and
    \begin{equation}
        \label{eq:bound_mollification}
        \norm{\grad \Phi_R^{\varepsilon} - \grad \Phi}[L^{\infty}(\torus^d)]
        \leq C \varepsilon.
    \end{equation}
\end{lemma}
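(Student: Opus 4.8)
The plan is to prove the two estimates separately; both follow from the representation of $\Phi_R^\varepsilon$ as a convolution together with the standard rescaling properties of the mollifier, the only real subtlety being how the derivatives are distributed in the first bound.

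For~\eqref{eq:bound_third_derivatives}, the key observation is that $\Phi_R$ possesses only two bounded derivatives, so we cannot differentiate it three times directly. Instead, I would move one of the three derivatives onto the mollifier and keep the other two on $\Phi_R$, writing
\[
    \partial_{\theta_i} \partial_{\theta_j} \partial_{\theta_k} \Phi_R^{\varepsilon}
    = (\partial_{\theta_k} \varrho_{\varepsilon}) * (\partial_{\theta_i} \partial_{\theta_j} \Phi_R).
\]
By Young's convolution inequality (bounding an $L^\infty$ function convolved with an $L^1$ function), this gives
\[
    \norm*{\partial_{\theta_i} \partial_{\theta_j} \partial_{\theta_k} \Phi_R^{\varepsilon}}[L^{\infty}(\torus^d)]
    \leq \norm*{\partial_{\theta_k} \varrho_{\varepsilon}}[L^1(\real^d)] \, \norm*{\partial_{\theta_i} \partial_{\theta_j} \Phi_R}[L^{\infty}(\torus^d)].
\]
A change of variables $u = \varepsilon^{-1}\theta$ in the definition of $\varrho_\varepsilon$ shows that $\partial_{\theta_k}\varrho_\varepsilon(\theta) = \varepsilon^{-d-1}(\partial_k\varrho)(\varepsilon^{-1}\theta)$, whence $\norm*{\partial_{\theta_k}\varrho_\varepsilon}[L^1(\real^d)] = \varepsilon^{-1}\norm*{\partial_k\varrho}[L^1(\real^d)]$; this is precisely where the factor $\varepsilon^{-1}$ originates. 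Bounding $\norm*{\partial_{\theta_i}\partial_{\theta_j}\Phi_R}[L^\infty(\torus^d)]$ by $\sup_\theta \abs{\hessian\Phi_R(\theta)}[\rm F]$ and absorbing $\max_k\norm*{\partial_k\varrho}[L^1(\real^d)]$ into the constant $C$ then yields~\eqref{eq:bound_third_derivatives}.

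For~\eqref{eq:bound_mollification}, I would use that $\grad\Phi_R^\varepsilon = \varrho_\varepsilon * \grad\Phi_R$ and that $\int_{\real^d}\varrho_\varepsilon = 1$, so that (recalling $\Phi_R = \Phi$ in the present setting)
\[
    \grad\Phi_R^\varepsilon(\theta) - \grad\Phi(\theta)
    = \int_{\real^d} \varrho_\varepsilon(u) \bigl( \grad\Phi(\theta - u) - \grad\Phi(\theta) \bigr) \, \d u.
\]
Since $\Phi \in C^2(\torus^d, \real)$, its gradient is Lipschitz with constant $L = \sup_\theta\abs{\hessian\Phi(\theta)}[\rm F]$, and since $\varrho_\varepsilon$ is supported in the ball of radius $\varepsilon$, we have $\abs{\grad\Phi(\theta-u) - \grad\Phi(\theta)} \leq L\abs{u} \leq L\varepsilon$ on the support of the integrand. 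Using $\int\varrho_\varepsilon = 1$ once more gives the pointwise bound $L\varepsilon$, uniformly in $\theta$, which is~\eqref{eq:bound_mollification}.

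The computations are routine; the only point requiring care is the bookkeeping around periodicity. Because $\Phi_R$ is identified with a $1$-periodic function on $\real^d$, its derivatives through second order are bounded on all of $\real^d$ by their suprema on $\torus^d$, and the convolutions produce again periodic functions, so the $L^\infty(\real^d)$ and $L^\infty(\torus^d)$ norms coincide. The main (and rather mild) obstacle is thus simply to ensure that at most two derivatives ever act on $\Phi_R$ in the first estimate, which is exactly what makes the argument compatible with the minimal $C^2$ regularity assumed.
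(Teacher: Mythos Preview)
Your proof is correct and follows essentially the same approach as the paper: one derivative is placed on the mollifier and two on $\Phi_R$, and the $\varepsilon^{-1}$ factor comes from rescaling the mollifier's derivative; the second bound is the standard Lipschitz estimate for $f - \varrho_\varepsilon * f$. The only cosmetic difference is that you invoke Young's convolution inequality and a change of variables to get $\lVert \partial_{\theta_k}\varrho_\varepsilon\rVert_{L^1} = \varepsilon^{-1}\lVert \partial_k\varrho\rVert_{L^1}$, whereas the paper obtains the same conclusion via a pointwise domination $\lvert \partial_{\theta_i}\varrho_\varepsilon(\theta)\rvert \leq C\varepsilon^{-1}\varrho_{2\varepsilon}(\theta)$.
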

\begin{proof}
    By the standard properties of mollifiers,
    it holds that
    \begin{equation}
        \label{eq:derivative_convolution}
        \forall (i,j,k) \in \{1, \dotsc, d\}^3, \qquad
        \partial_{\theta_i} \partial_{\theta_j} \partial_{\theta_k} \Phi_R^\varepsilon = \partial_{\theta_i} \varrho_{\varepsilon} * \partial_{\theta_j} \partial_{\theta_k} \Phi_R.
    \end{equation}
    We calculate
    \begin{align*}
        \partial_{\theta_i} \varrho_{\varepsilon}(\theta)
        &= - 2 \varepsilon^{-(d+1)} \left( \varrho(\varepsilon^{-1}\theta) \frac{\varepsilon^{-1} \theta_i}{(1 - \lvert \varepsilon^{-1} \theta \rvert^2)^2} \right)
        =: - 2 \varepsilon^{-(d+1)} g_i(\varepsilon^{-1} \theta).
    \end{align*}
    The function $g_i$, for any $i \in \{1, \dotsc, d\}$,
    is smooth and supported in the closed ball of radius 1,
    and so there is some constant $M$ (independent of $\varepsilon$) such that $\lvert g_i(\tau) \rvert \leq M \varrho(\tau/2)$ for all $\tau \in \real^d$.
    Therefore
    \[
        \forall \theta \in  \torus^d, \qquad
        \bigl\lvert \partial_{\theta_i} \varrho_{\varepsilon}(\theta) \bigr\rvert
        = 2 \varepsilon^{-(d+1)} \bigl\lvert g_i(\varepsilon^{-1} \theta) \bigr\rvert
        \leq 2M \varepsilon^{-(d+1)} \varrho(\varepsilon^{-1} \theta/2) = 2^{d+1}M \varepsilon^{-1} \varrho_{2\varepsilon} (\theta),
    \]
    and so, going back to~\eqref{eq:derivative_convolution},
    it is simple to bound the third derivatives of $\Phi_R^{\varepsilon}$ using the second derivatives of~$\Phi_R$,
    at the expense of a large factor $\varepsilon^{-1}$ on the right-hand side:
    \begin{equation*}
        \forall \theta \in \torus^d, \qquad
        \lvert \partial_{\theta_i} \partial_{\theta_j} \partial_{\theta_k} \Phi_R^\varepsilon(\theta) \rvert
        \leq C \varepsilon^{-1} \sup_{\theta \in \torus^d} \abs{\hessian \Phi_R(\theta)}[\rm F].
    \end{equation*}
    This proves~\eqref{eq:bound_third_derivatives}.
    For the second claim, note that the following inequality holds for any 1-periodic Lipschitz continuous $f$ with Lipschitz constant $L$:
    \[
        \forall \theta \in \torus^d, \qquad
        \left\lvert f(\theta) - \varrho_{\varepsilon} * f(\theta) \right\rvert
        = \left\lvert \int_{\real^d}  \bigl( f(\theta) - f(\theta + \tau) \bigr) \, \varrho_{\varepsilon}(\tau) \, \d \tau \right\rvert
        \leq \int_{\real^d} \lvert f(\theta) - f(\theta + \tau) \rvert \varrho_{\varepsilon}(\tau) \, \d \tau
        \leq L \varepsilon.
    \]
    In particular, since $\grad \Phi_R$ is Lipschitz continuous by the assumption,
    we have~\eqref{eq:bound_mollification}.
\end{proof}

\section{Proof of \texorpdfstring{\cref{lemma:auxiliary_result_aprrox_zeta}}{the Auxiliary Lemma}}%
\label{sec:proof_of_auxiliary_lemma}

If $n \leq M$, then $\approxupsilon{n}{j}$ and $\approxxi{n}{j}$ coincide and the statement is true.
If $n > M$, then by definition it holds
\begin{align*}
    \notag
    \approxxi{n}{j} - \approxupsilon{n}{j}
    &= \approxxi{0}{j}\e^{-n \frac{\Delta}{\delta^2}} +
        \sqrt{1 - \e^{-2 \frac{\Delta}{\delta^2}}} \sum_{m=1}^{n-M-1} x_{m-1}^{(j)} \e^{-(n-m) \frac{\Delta}{\delta^2}}\\
    &= \e^{-(M+1) \frac{\Delta}{\delta^2}} \left( \approxxi{0}{j}\e^{- (n-M-1) \frac{\Delta}{\delta^2}} +
    \sqrt{1 - \e^{-2 \frac{\Delta}{\delta^2}}} \sum_{m=1}^{n-M-1} x_{m-1}^{(j)} \e^{-(n-M-1-m) \frac{\Delta}{\delta^2}} \right).
\end{align*}
Using the inequality $\abs{a + b}^4 \leq 8 \abs{a}^4 + 8 \abs{b}^4$ for all $a, b \in \real^d$,
together with the working assumption that $n > M$,
we deduce
\begin{align}
    \label{eq:intermediate_equation_lemma}
    \expect \abs{\approxxi{n}{j} - \approxupsilon{n}{j}}^4
    \leq \e^{-4(M+1) \frac{\Delta}{\delta^2}} \left( 8 \expect \abs{\approxxi{0}{j}}^4 +
    8 \left(1 - \e^{-2 \frac{\Delta}{\delta^2}}\right)^2 \, \expect \abs{\sum_{m=1}^{n-M-1} x_{m-1}^{(j)} \e^{-(n-M-1-m) \frac{\Delta}{\delta^2}}}^4 \right).
\end{align}
It remains to prove that the second term in the round brackets is bounded from above independently of~$\delta$.
Since it holds
$
    \eucnorm{a - b}^4 \leq C \sum_{i=1}^{d} \abs{a_i - b_i}^4,
$
for all $a, b \in \real^d$,
we assume without loss of generality that $d = 1$ in order to establish~\eqref{eq:bound_modified_xis}.
For simplicity of notation, let $v_m = x_{n-M-2 -m}^{(j)}$,
so that
\[
\expect \abs{\sum_{m=1}^{n-M-1} x_{m-1}^{(j)} \e^{-(n-M-1-m) \frac{\Delta}{\delta^2}}}^4
= \expect \abs{\sum_{m=0}^{n-M-2} v_m \e^{-m \frac{\Delta}{\delta^2}}}^4.
\]
Expanding the sum and using the independence of $v_0, \dotsc, v_{n-M-2}$, we calculate
\begin{align*}
    \expect \abs{\sum_{m=0}^{n-M-2} v_m \e^{-m \frac{\Delta}{\delta^2}}}^4
    &= \sum_{m=0}^{n-M-2} \expect \abs{v_m}^4 \e^{-4 m \frac{\Delta}{\delta^2}}
    + 6 \sum_{m=0}^{n-M-2} \sum_{\ell=m+1}^{n-M-2} \expect \abs{v_m}^2 \expect \abs{v_\ell}^2 \e^{-2 (m + \ell) \frac{\Delta}{\delta^2}} \\
    &\leq 3 \sum_{m=0}^{\infty} \e^{-4 m \frac{\Delta}{\delta^2}}
    + 6 \sum_{m=0}^{\infty} \sum_{\ell=m+1}^{\infty} \e^{-2 (m + \ell) \frac{\Delta}{\delta^2}} \\
    &\leq \frac{3}{1 - \e^{-4 \frac{\Delta}{\delta^2}}}
    + \frac{6}{\left(1 - \e^{-2 \frac{\Delta}{\delta^2}}\right)^2}
    \leq \frac{9}{\left(1 - \e^{-2 \frac{\Delta}{\delta^2}}\right)^2}.
\end{align*}
Therefore we deduce~\eqref{eq:bound_modified_xis},
because the denominator cancels out with the factor of the second term in the round brackets in~\eqref{eq:intermediate_equation_lemma}.
In order to derive~\eqref{eq:bound_difference_matrices},
we use H\"older's inequality and~\eqref{eq:bound_modified_xis}:
    \begin{align*}
        \expect \abs{C(\approxUpsilon{n}) - C(\approxXi{n})}^2
        &\leq C \sum_{j=1}^{J} \expect \abs{\approxupsilon{n}{j} \otimes \approxupsilon{n}{j} - \approxxi{n}{j} \otimes \approxxi{n}{j}}^2 \\
        &\leq C \sum_{j=1}^{J} \expect \abs{\approxupsilon{n}{j} \otimes (\approxupsilon{n}{j} - \approxxi{n}{j}) - (\approxxi{n}{j} - \approxupsilon{n}{j}) \otimes \approxxi{n}{j}}^2 \\
        &\leq C \sum_{j=1}^{J} \expect \left( \left( \abs{\approxupsilon{n}{j}}^2 + \abs{\approxxi{n}{j}}^2 \right)  \abs{\approxupsilon{n}{j} - \approxxi{n}{j}}^2  \right) \\
        &\leq C \sum_{j=1}^{J} \expect \left(  \abs{\approxupsilon{n}{j} - \approxxi{n}{j}}^4  \right)^{1/2}
        \leq C \e^{- 2 (M+1)\frac{\Delta}{\delta^2}},
    \end{align*}
which is the required bound.

\bibliographystyle{plain}
\bibliography{main}
\end{document}